\numberwithin{equation}{section}
\newtheorem{Theorem}{Theorem}[section]
\newtheorem{Definition}[Theorem]{Definition}
\newtheorem{Proposition}[Theorem]{Proposition}
\newtheorem{Lemma}[Theorem]{Lemma}
\newtheorem{Assumption-Notation}[Theorem]{Assumption-Notation}
\newtheorem{Remark}[Theorem]{Remark}
\newtheorem{Problem}{Problem}
\newtheorem{Claim}[Theorem]{Claim}
\def\dim{\operatorname{dim}}
\begin{document}

\title[The cohomological support locus and the Iitaka fibration]{The cohomological support locus of pluricaonical sheaves and the Iitaka fibration}
\address{Lei Zhang\\College of Mathematics and information Sciences\\Shaanxi Normal University\\Xi'an 710062\\P.R.China}
\email{lzhpkutju@gmail.com}
\author{Lei Zhang}

\begin{abstract}
Let $\mathrm{alb}_X: X \rightarrow \mathrm{Alb}(X)$ be the Albanese map of a smooth projective variety and $f: X \rightarrow Y$ the fibration arising from the Stein factorization of $\mathrm{alb}_X$. For a positive integer $m$, if $f$ and $m$ satisfy two certain assumptions AS(1, 2), then the translates through the origin of all components of cohomological locus $V^0(\omega_X^m, \mathrm{alb}_X) \subset \mathrm{Pic}^0(X)$ generates $I^*\mathrm{Pic}^0(S)$ where $I: X \rightarrow S$ denotes the Iitaka fibration. As an application, we study pluri-canonical maps.
\end{abstract}

\maketitle

\section{Introduction}\label{1}
\textbf{Conventions:}
We work over complex numbers. For a smooth projective variety $X$ with $q(X) >0$, we usually denote by $\mathrm{alb}_X: X \rightarrow
\mathrm{Alb}(X)$ the Albanese map. A smooth variety $X$ with Kodaira dimension $\kappa(X) \geq 0$ has Iitaka fibration $I: X \dashrightarrow S$ which is defined by $|mK_X|$ for sufficiently divisible $m$. Blowing up $X$ if necessary, throughout this paper we always assume $I$ is a morphism. A \textbf{good minimal model} of a variety is a birational model with semi-ample canonical divisor and $\mathbb{Q}$-factorial canonical singularities. For a cartier divisor $D$ on a variety $X$ with $|D| \neq \emptyset$, we denote by $\phi_{|D|}$ the map induced by the linear system $|D|$.

Let $a: X \rightarrow A$ be a map from a projective variety to an abelian variety, and $\mathcal{F}$ a sheaf on $X$. The cohomological support locus of $\mathcal{F}$ w.r.t. $a$ is defined as
$$V^i(\mathcal{F}, a) := \{\alpha \in \mathrm{Pic}^0(A)|h^i(\mathcal{F} \otimes a^*\alpha) \neq 0\}$$
The cohomological support locus of the canonical sheaf $V^i(\omega_X, \mathrm{alb}_X)$ plays an important role in studying irregular varieties (see for example \cite{GL1}, \cite{GL2} and \cite{CH1}), which is closely related to pluri-canonical maps and Iitaka fibrations. Recall the results of \cite{CH3} and \cite{JS} on this topic.

\begin{Theorem}\label{chjs}
Let $X$ be a smooth irregular variety with $\kappa(X) \geq 0$. Denote by $I: X \rightarrow S$ be the Iitaka fibration.

(1) If $X$ is of maximal Albanese dimension (m.A.d.), then the subgroup of $\mathrm{Pic}^0(X)$
generated by the translates through the origin of the components of $V^0(\omega_X, \mathrm{alb}_X)$ is $I^*\mathrm{Pic}^0(S)$.

(2) If $X$ is of general type, and the Albanese fibers are of dimension 1, then  $V^0(\omega_X, \mathrm{alb}_X) = \mathrm{Pic}^0(X)$.
\end{Theorem}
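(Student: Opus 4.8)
The plan is to treat the two parts separately, in each case combining the Green--Lazarsfeld generic vanishing theorem \cite{GL1}, \cite{GL2} with Simpson's structure theorem (each irreducible component of $V^0(\omega_X,\mathrm{alb}_X)$ is a translate of a subtorus of $\mathrm{Pic}^0(X)$ by a torsion point) and reducing the nonvanishing question to the geometry of the relevant fibration. The two inclusions are established by restriction to a general fibre (for containment in $I^*\mathrm{Pic}^0(S)$) and by a positivity/pushforward argument (for the reverse).

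For part (1), since $X$ is of maximal Albanese dimension and $\kappa(X)\ge 0$, the general fibre $F$ of the Iitaka fibration $I\colon X\to S$ satisfies $\kappa(F)=0$ and is again of maximal Albanese dimension; by Kawamata's characterization $F$ is birational to an abelian variety, so $\omega_F\cong\mathcal{O}_F$. I would first prove the inclusion ``$\subseteq$'' by restricting to a general fibre: for $\alpha\in\mathrm{Pic}^0(X)$ one has $(\omega_X\otimes\alpha)|_F\cong\alpha|_F$, a degree-zero line bundle on $F$, which has a nonzero section only when $\alpha|_F\cong\mathcal{O}_F$. Hence any section of $\omega_X\otimes\alpha$ vanishes on the general fibre unless $\alpha\in\ker(\mathrm{Pic}^0(X)\to\mathrm{Pic}^0(F))$, whose identity component is exactly $I^*\mathrm{Pic}^0(S)$; this forces every component of $V^0$, once translated through the origin, to lie in $I^*\mathrm{Pic}^0(S)$. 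For the reverse inclusion I would use that a smooth model of the Iitaka base $S$ is of general type and of maximal Albanese dimension, so that $V^0(\omega_S,\mathrm{alb}_S)=\mathrm{Pic}^0(S)$; combined with the inclusion $\omega_S\hookrightarrow I_*\omega_X$ coming from $\omega_F\cong\mathcal{O}_F$ and the projection formula $H^0(\omega_X\otimes I^*\gamma)=H^0(S,I_*\omega_X\otimes\gamma)$, this yields $h^0(\omega_X\otimes I^*\gamma)\ge h^0(\omega_S\otimes\gamma)>0$ for every $\gamma$. Thus $I^*\mathrm{Pic}^0(S)$ is itself the component of $V^0$ passing through the origin, and together with the first inclusion the subgroup generated by the translated components is precisely $I^*\mathrm{Pic}^0(S)$.

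For part (2), let $f\colon X\to Y$ be the Stein factorization of $\mathrm{alb}_X$; then $Y$ is of maximal Albanese dimension with $\dim Y=\dim X-1$, and the general fibre $C$ of $f$ is a smooth curve. Since $X$ is of general type the fibres have genus $g\ge 2$, so for every $\alpha\in\mathrm{Pic}^0(X)$ the restriction $(\omega_X\otimes\alpha)|_C\cong\omega_C\otimes\alpha|_C$ has degree $2g-2>0$ and, by Riemann--Roch, $h^0\ge g-1\ge 1$; hence $\mathcal{E}_\alpha:=f_*(\omega_X\otimes\alpha)$ is a nonzero sheaf on $Y$. I would then show $H^0(Y,\mathcal{E}_\alpha)\neq 0$ for all $\alpha$, which by Leray is equivalent to $V^0(\omega_X,\mathrm{alb}_X)=\mathrm{Pic}^0(X)$. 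Here I would invoke generic vanishing on the maximal Albanese dimension variety $Y$: the sheaf $\mathcal{E}_\alpha$ (equivalently $f_*\omega_{X/Y}$ suitably twisted) satisfies generic vanishing, so its locus of nonvanishing is nonempty and dense, and the general type hypothesis is then used to promote generic nonvanishing to nonvanishing everywhere.

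The main obstacle in both parts is the ``everywhere'' direction. In part (1) it is the control of $I_*\omega_X$ together with the positivity input on the Iitaka base, namely the full nonvanishing $V^0(\omega_S)=\mathrm{Pic}^0(S)$ for general type varieties of maximal Albanese dimension; in part (2) it is passing from generic nonvanishing of $\mathcal{E}_\alpha$ on $Y$ to nonvanishing for every $\alpha$, which requires exploiting the strict positivity of the relative canonical sheaf along the curve fibres together with the general type condition, rather than mere generic vanishing, and is the genuinely hard point underlying \cite{CH3} and \cite{JS}.
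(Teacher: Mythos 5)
Your ``$\subseteq$'' direction in part (1) is essentially sound and parallels the paper's own argument that $\hat{T}\subset I^*\mathrm{Pic}^0(S)$ (restriction to Iitaka fibres plus the torsion-translate structure of components), modulo one repairable slip: a general Iitaka fibre $F$ is only \emph{birational} to an abelian variety, so $\omega_F\cong\mathcal{O}_F$ is false; you should argue via $\mathrm{alb}_{F*}\omega_F\cong\mathcal{O}_{\mathrm{Alb}(F)}$, which still gives $h^0(\omega_F\otimes\alpha|_F)\neq 0$ only when $\alpha|_F$ is trivial. The two ``hard'' directions, however, contain genuine gaps. In part (1), your reverse inclusion rests on the claim that $V^0(\omega_S,\mathrm{alb}_S)=\mathrm{Pic}^0(S)$ for every general type variety of maximal Albanese dimension, and it would in fact prove the stronger statement $I^*\mathrm{Pic}^0(S)\subseteq V^0(\omega_X,\mathrm{alb}_X)$. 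Both statements are false: Ein--Lazarsfeld's example (a resolution of $(C_1\times C_2\times C_3)/\langle\sigma_1\times\sigma_2\times\sigma_3\rangle$, with $C_i\to E_i$ bielliptic) is of general type and of m.A.d.\ with $\chi(\omega_X)=0$; since $\omega_X$ is a GV-sheaf, a generic $\alpha$ has $h^0(\omega_X\otimes\alpha)=\chi(\omega_X)=0$, so $V^0$ is a \emph{proper} subset, even though here $I$ is birational and $S=X$. This is exactly why the theorem asserts that the translated components \emph{generate} $I^*\mathrm{Pic}^0(S)$ rather than fill it out, and why any correct proof must produce several small components that jointly generate. That is what \cite{CH2}, \cite{JS}, and this paper do: here Theorem \ref{chjs} is recovered as the $m=1$ case of Theorem \ref{main} (AS(1,2) hold when the fibres are points or curves of genus $\geq 2$, by \cite{BPV} Chap.\ III Thm.\ 18.2), by forming the subgroup $\hat{T}$ generated by \emph{all} components, dualizing to obtain the fibration $g:X\to Z$, and using the Fourier--Mukai/GV machinery together with the isotriviality dichotomy of Proposition \ref{G} to identify $g$ with the Iitaka fibration over the Albanese. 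Nothing in your proposal plays this role; also, your auxiliary inclusion $\omega_S\hookrightarrow I_*\omega_X$ is unjustified, since $I_*\omega_{X/S}$ has rank one but need not admit a section.

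In part (2) the entire difficulty is deferred rather than addressed: after correctly checking $f_*(\omega_X\otimes\alpha)\neq 0$ via Riemann--Roch on a genus $g\geq 2$ fibre, you say that generic vanishing gives a nonempty nonvanishing locus and that the general type hypothesis ``promotes generic nonvanishing to nonvanishing everywhere.'' That promotion \emph{is} the theorem, and GV-theoretic inputs alone cannot supply it: the paper's own example in Section \ref{eg} (a threefold with $\kappa(X)=1$ whose Albanese map is an elliptic fibration) has $V^0(\omega_X,\mathrm{alb}_X)$ equal to a single torsion point, so for fibre genus $1$ the promotion simply fails. What makes genus $\geq 2$ work is the positivity encoded in AS(2) for $m=1$: for a family of curves of genus $\geq 2$ over a curve, $\deg(f_{C*}\omega_{X_C/C})>0$ unless the family becomes trivial after an \'{e}tale base change (\cite{BPV} Chap.\ III Thm.\ 18.2), and this dichotomy --- fed into the numerically-trivial-Fourier--Mukai-transform argument of Proposition \ref{G} (Steps 1--3), or into the arguments of \cite{JS} --- is what forces $V^0=\mathrm{Pic}^0(X)$. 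Your proposal names ``strict positivity of the relative canonical sheaf'' as the needed ingredient but never formulates it, never invokes the isotrivial/non-isotrivial dichotomy, and never explains how positivity on curves in $Y$ propagates to nonvanishing for every $\alpha$; as written, part (2) is a restatement of the goal, not a proof.
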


The result above is applied to study the birationality of pluri-canonical maps. Let $X$ be a variety of general type with Albanese fibers being of dimension $\leq 1$. It is proved that for every $n \geq 4$ and $\alpha \in \mathrm{Pic}^0(X)$, $|\omega_X^n \otimes \alpha|$ induces a birational map (cf. \cite{Ti} and \cite{JS}); moreover if $X$ is of m.A.d. then $|\omega_X^3|$ induces a birational map (\cite{JLT}).

If Albanese fibers have higher dimension, then the cohomological locus $V^0(\omega_X, \mathrm{alb}_X)$ may contain little information (for example if a general fiber $F$ has $p_g(F) = 0$). We will study the cohomological support locus $V^0(\omega_X^m, \mathrm{alb}_X)$ of the pluri-canonical sheaf, which has been studied in \cite{CH2} and \cite{Lai}. To state our main result, we introduce three assumptions for a fibration $f:X \rightarrow Y$ between two smooth projective varieties and a positive integer $m$:

\begin{enumerate}
\item[AS(1):] A general fiber $F$ of $f$ has a good minimal model, and $|mK_F| \neq \emptyset$.
\item[AS(2):]
For every smooth projective curve $C$ on $Y$, if $X_C:=X \times_Y C$ is smooth and the fibration $f_C:= f|_{X_C}: X_C \rightarrow C$ is flat, then $\deg(f_{C*}\omega^m_{X_C/C})$ ($= \det (f_*\omega^m_{X/Y})\cdot C$ by adjunction formula and the flatness of $f_C$) is positive, unless there exists an \'{e}tale cover $\tilde{C} \rightarrow C$ such that $X_C \times_C \tilde{C}$ is birationally equivalent to  $F \times \tilde{C}$.
\item[WAS(2):]{With the notation as in AS(2), we have $\deg(f_{C*}\omega^m_{X_C/C}) > 0$, unless $f_C$ is birationally isotrivial, i.e., there exists a flat base change $\tilde{C} \rightarrow C$ such that $X_C \times_C \tilde{C}$ is birationally equivalent to $F \times \tilde{C}$.}
\end{enumerate}

Our main result is
\begin{Theorem}\label{main}
Let $X$ be a smooth projective variety with $\kappa(X) \geq 0$, and denote by $I: X \rightarrow S$ the Iitaka fibration. Let $f: X \rightarrow Y$ be the fibration arising from the Stein factorization of the Albanese map  $\mathrm{alb}_X: X \rightarrow \mathbf{Alb}(X)$. Suppose that the assumptions AS(1, 2) are satisfied for the fibration $f$ and the integer $m >0$. Then the subgroup of $\mathrm{Pic}^0(X)$ generated by the translates through the origin of the components of $V^0(\omega_X^m, \mathrm{alb}_X)$ is $I^*\mathrm{Pic}^0(S)$, moreover $q(S) =  q(X) -(\dim(X) - \kappa(X)) - (\dim(F) - \kappa(F))$.
\end{Theorem}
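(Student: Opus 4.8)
The plan is to prove the two inclusions relating
$G:=\langle\text{translates through the origin of the components of }V^0(\omega_X^m,\mathrm{alb}_X)\rangle$
and $I^*\mathrm{Pic}^0(S)$ separately, and then to read off $q(S)$ from the resulting geometry. First I would record the reductions forced by the hypothesis. Since $f\colon X\to Y$ is the Stein factorization of $\mathrm{alb}_X$, the variety $Y$ is of maximal Albanese dimension, $q(X)=q(Y)$, and $\mathrm{alb}_X^*$ identifies $\mathrm{Pic}^0(\mathbf{Alb}(X))$ with $\mathrm{Pic}^0(X)$ and, through $f^*$, with $\mathrm{Pic}^0(Y)$; in particular $f^*\mathrm{Pic}^0(Y)=\mathrm{Pic}^0(X)$, so every $\alpha$ occurring in $V^0(\omega_X^m,\mathrm{alb}_X)$ has the form $f^*\beta$ and, by Leray, $h^0(X,\omega_X^m\otimes f^*\beta)=h^0(Y,f_*\omega_X^m\otimes\beta)$. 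By the structure theory of cohomological support loci each component of $V^0(\omega_X^m,\mathrm{alb}_X)$ is a torsion translate $\beta_i+T_i$ of an abelian subvariety $T_i$, so $G=\sum_i T_i$ is itself an abelian subvariety of $\mathrm{Pic}^0(X)$, and $\dim G$ will in the end compute $q(S)$.

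For the inclusion $G\subseteq I^*\mathrm{Pic}^0(S)$ I would restrict to a general fibre $X_s$ of the Iitaka fibration. As $I$ is a morphism and $X_s$ is general, $\omega_X|_{X_s}\cong\omega_{X_s}$ and $\kappa(X_s)=0$. Using that $X_s$ has a good minimal model, on which $mK$ is $\mathbb{Q}$-trivial by abundance in Kodaira dimension zero, the locus $\{\eta:h^0(\omega_{X_s}^m\otimes\eta)\neq 0\}$ is a single torsion point of $\mathrm{Pic}^0(X_s)$. Restricting a nonzero section of $\omega_X^m\otimes\alpha$ to $X_s$ then forces $\alpha|_{X_s}$ to equal that fixed point for every torsion $\alpha\in\beta_i+T_i$; since such $\alpha$ are dense in the coset and $\alpha\mapsto\alpha|_{X_s}$ is a homomorphism, $T_i$ restricts to $0$ on $X_s$. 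Finally the Albanese sequence $\mathbf{Alb}(X_s)\to\mathbf{Alb}(X)\to\mathbf{Alb}(S)\to 0$ dualizes to an exact sequence $0\to\mathrm{Pic}^0(S)\xrightarrow{I^*}\mathrm{Pic}^0(X)\to\mathrm{Pic}^0(X_s)$, so $T_i\subseteq\ker(\mathrm{Pic}^0(X)\to\mathrm{Pic}^0(X_s))=I^*\mathrm{Pic}^0(S)$, whence $G\subseteq I^*\mathrm{Pic}^0(S)$.

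The reverse inclusion is the heart of the argument and the place where AS(2) is indispensable; I expect it to be the \textbf{main obstacle}. Dualizing $G\subseteq\mathrm{Pic}^0(X)$ gives a quotient $\mathbf{Alb}(X)\to B$; let $h\colon X\to Z$ be the Stein factorization of $X\to\mathbf{Alb}(X)\to B$, so $G=h^*\mathrm{Pic}^0(Z)$ and, by the previous paragraph, every component of $V^0(\omega_X^m,\mathrm{alb}_X)$ is pulled back from $Z$. It then suffices to show that a general fibre $X_z$ of $h$ has $\kappa(X_z)=0$: granting this, the Iitaka fibration factors as $X\to Z\to S$, so $I^*\mathrm{Pic}^0(S)\subseteq h^*\mathrm{Pic}^0(Z)=G$ and equality follows. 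To prove $\kappa(X_z)=0$ I would argue by contradiction. Because $G\subseteq I^*\mathrm{Pic}^0(S)\subseteq f^*\mathrm{Pic}^0(Y)$, the map $X\to B$ factors through $Y$, so $Z$ is dominated by $Y$ and curves on $Z$ lift to curves on $Y$, making AS(2) applicable. If $\kappa(X_z)>0$, the relative Iitaka fibration of $X$ over $Z$ produces canonical positivity transverse to $Z$; the hypothesis AS(2) then guarantees that $\det f_*\omega^m$ is strictly positive along the relevant curves, unless the family is birationally isotrivial after étale base change. Feeding this positivity into generic vanishing and the Fourier–Mukai characterization of $V^0$ would yield a component of $V^0(\omega_X^m,\mathrm{alb}_X)$ whose translate through the origin is not contained in $G$, contradicting the definition of $G$; in the exceptional isotrivial case the same contradiction appears after the étale cover trivializing the family. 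This forces $\kappa(X_z)=0$ and closes the inclusion. Throughout, AS(1) is what guarantees $f_*\omega^m_{X/Y}\neq 0$ and the existence of fibrewise good minimal models, making the relative Iitaka theory and the additivity below available.

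It remains to compute $q(S)$. By AS(1) together with the fact that $Y$ is of maximal Albanese dimension, the Iitaka additivity $\kappa(X)=\kappa(F)+\kappa(Y)$ holds, so the Iitaka fibre dimension splits as $\dim X-\kappa(X)=(\dim F-\kappa(F))+(\dim Y-\kappa(Y))$. By Ueno's description of the Iitaka fibration of a subvariety of an abelian variety, the base of the Iitaka fibration of the maximal Albanese dimensional variety $Y$ has irregularity $q(Y)-(\dim Y-\kappa(Y))$, and the first part of the theorem identifies $q(S)=\dim G=\dim I^*\mathrm{Pic}^0(S)$ with this quantity. Combining these with $q(X)=q(Y)$ gives $q(S)=q(Y)-(\dim Y-\kappa(Y))=q(X)-\big((\dim X-\kappa(X))-(\dim F-\kappa(F))\big)$, which is the asserted irregularity formula for $S$.
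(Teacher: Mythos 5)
Your first inclusion $G\subseteq I^*\mathrm{Pic}^0(S)$ is essentially sound and runs parallel to the paper's argument (the paper phrases it via the pushforwards $\mathrm{alb}_{S*}I_*(\omega_X^m\otimes\alpha)$ rather than restriction of sections, which also handles the uniformity in $\alpha$ that your density remark glosses over). The reverse inclusion, however, rests on a false reduction. You claim it suffices to show that a general fibre $X_z$ of $h\colon X\to Z$ has $\kappa(X_z)=0$, so that $I$ factors as $X\to Z\to S$; both the target statement and the direction of the factorization are wrong. Consider the situation of Theorem \ref{chjs}(2), which remark (2) of the introduction notes is an instance of Theorem \ref{main}: $X$ a smooth projective surface of general type with $q(X)=1$ whose Albanese map is a fibration over an elliptic curve with fibres of genus $2$ (AS(1,2) hold for $m=1$ by \cite{BPV} Chap. III Thm. 18.2). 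There $V^0(\omega_X,\mathrm{alb}_X)=\mathrm{Pic}^0(X)$, so your $G$ is all of $\mathrm{Pic}^0(X)$, $h=f$, and $X_z=F$ has $\kappa(X_z)=1\neq 0$; moreover $S$ is birational to $X$ while $Z$ is the elliptic curve, so no factorization $X\to Z\to S$ can exist — instead $h$ factors through $I$, which is exactly what \ref{itk}(2) requires. Your contradiction scheme therefore cannot close: the hypothesis ``$\kappa(X_z)>0$'' you intend to refute actually holds in legitimate instances of the theorem. What must be proved instead — and this is the real content of the paper's Proposition \ref{G} — is that $X_z$ becomes birational to a product $F\times\tilde K$ after an \'{e}tale base change $\tilde K\to K$, so that the \emph{Iitaka fibres of $X_z$ dominate $K$}; dualizing the resulting inclusion $K\subseteq K'$ gives $I^*\mathrm{Pic}^0(S)\subseteq G$. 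Establishing this uses the Fourier--Mukai step (finiteness of $V^0$ of $a_{X_z*}\omega_{X_z}^m$ forces that sheaf to be a numerically trivial vector bundle), a Riemann--Roch computation on complete-intersection curves in $K$ combined with nefness of $f_{C*}\omega^m$ (forcing $f(X_z)\to K$ to be \'{e}tale, hence $f(X_z)$ abelian), and only then AS(2) together with the isotriviality results of Sec. \ref{fbr}; none of this appears in your sketch. (A further unjustified step: $h^*\mathrm{Pic}^0(Z)=G$ need not hold, since $Z\to B$ is merely finite onto its image and $q(Z)$ can exceed $\dim B$.)

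The irregularity count is broken by the same example. The claimed additivity $\kappa(X)=\kappa(F)+\kappa(Y)$ fails there ($\kappa(X)=2$ while $\kappa(F)+\kappa(Y)=1+0$; only the inequality $\kappa(X)\geq\kappa(F)+\kappa(Y)$ is available, and it can be strict), and $q(S)$ is not the irregularity of the Iitaka base of $Y$: in the example that base is a point, whereas $q(S)=q(X)=1$. In fact, even granting your premises, your chain of equalities produces $q(S)=q(X)-(\dim X-\kappa(X))+(\dim F-\kappa(F))$, which has the wrong sign on the last term and is \emph{not} the formula asserted in Theorem \ref{main} unless $\dim F=\kappa(F)$. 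The correct computation, as in the paper, uses $\kappa(X)=\dim Z+\kappa(X_z)$ (valid precisely because $h$ factors through $I$) together with $\kappa(X_z)=\kappa(F)$ and $\dim K=\dim X_z-\dim F$, and these last two facts are consequences of Proposition \ref{G} — that is, the dimension formula is downstream of exactly the structural statement your proposal omits.
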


As an application, we prove
\begin{Theorem}\label{pcm}
Let $X$ be a smooth projective variety, $I: X \rightarrow S$ the Iitaka fibration and $a = \mathrm{alb}_S \circ I$. Let $f: X \rightarrow Y$ be the fibration arising from the Stein factorization of $a: X \rightarrow \mathrm{Alb}(S)$ and $F$ a general fiber.  Suppose that
\begin{itemize}
\item[(1)]{AS(1, 2) are satisfied for $f$ and $m =1$;}
\item[(2)]{for the integer $n \geq 2$, the pluri-canonical map $\phi_{|nK_F|}$ is birational to the Iitaka fibration of $F$.}
\end{itemize}
Then for every $\alpha \in \mathrm{Pic}^0(S)$, the map $\phi_{|(n+2)K_X \otimes I^*\alpha|}$ is birational to the Iitaka fibration.
\end{Theorem}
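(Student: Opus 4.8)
The plan is to verify birationality by separating two general points, reducing to a ``base case'' and a ``fibre case'' according to the Albanese map, and then combining the fibrewise hypothesis (2) with generic vanishing and the structural information of Theorem \ref{main}. First I would fix notation: write $A = \mathrm{Alb}(S)$, let $g\colon Y \to A$ be the finite map coming from the Stein factorization so that $a = g\circ f$, and let $S_b$ be the (connected) Albanese fibre of $S$ over the image of a general $y\in Y$. Since $\alpha\in\mathrm{Pic}^0(S)\cong\mathrm{Pic}^0(A)$ corresponds to some $\beta\in\mathrm{Pic}^0(A)$, we have $I^*\alpha = a^*\beta = f^*g^*\beta$; in particular $I^*\alpha$ is pulled back from $Y$ and restricts trivially to every fibre $F$ of $f$, while $\omega_X^{n+2}|_F = \omega_F^{n+2}$ by adjunction. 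I would then record that the general fibre $F$ equals $I^{-1}(S_b)$ and that $I|_F\colon F\to S_b$ is birational to the Iitaka fibration of $F$, using AS(1) (so that $\kappa(F)=\dim S_b$) together with easy addition. Because $n\ge 2$ and $(n+2)>n$, hypothesis (2) then gives that $\phi_{|(n+2)K_F|}$ is also birational to the Iitaka fibration of $F$, hence separates two general points of $F$ lying in distinct fibres of $I|_F$.

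To prove that $\phi_{|(n+2)K_X\otimes I^*\alpha|}$ is birational to $I$ it suffices, for two general points $x_1,x_2$ with $I(x_1)\ne I(x_2)$, to produce a section of $\omega_X^{n+2}\otimes I^*\alpha$ vanishing at $x_1$ but not at $x_2$, and to check that the system contracts the fibres of $I$. The latter is automatic: on a general Iitaka fibre $X_s$ one has $(n+2)K_X\otimes I^*\alpha|_{X_s}=(n+2)K_{X_s}$ with $\kappa(X_s)=0$, so the restricted system carries no separating section and the map is constant on $X_s$; consequently birationality to $I$ reduces to the separation statement plus genericity of the induced map onto its image. If $a(x_1)\ne a(x_2)$, the two points lie in distinct fibres of $f$, and I would push forward to $A$ and study $a_*(\omega_X^{n+2})\otimes\beta=a_*(\omega_X^{n+2}\otimes a^*\beta)$. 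Using Kollár's theorem together with Hacon's generic vanishing, the relevant direct images are $GV$-sheaves, and Theorem \ref{main} (applied to $f$ and $m=1$) guarantees that $V^0(\omega_X,a)$ generates $a^*\mathrm{Pic}^0(A)=I^*\mathrm{Pic}^0(S)$; after the standard analysis this largeness forces the appropriate twisted direct image to be $M$-regular, hence continuously globally generated in the sense of Pareschi--Popa. Tensoring two continuously globally generated factors produces a sheaf that is globally generated after twisting by an arbitrary $\beta\in\mathrm{Pic}^0(A)$ (a $\mathrm{Pic}^0$-twist of a continuously globally generated sheaf being again such), so the sections separate $a(x_1)$ from $a(x_2)$ for every $\alpha$.

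If instead $a(x_1)=a(x_2)$, then $x_1,x_2$ lie in the same general fibre $F$ (as $g$ is finite, hence injective over a general point) and in distinct fibres of $I|_F$. Here I would use hypothesis (2) to separate them by a section of $\omega_F^{n+2}$ and then lift it to $X$. The lifting is the heart of the matter: one considers the relative sheaf $f_*(\omega_X^{n+2}\otimes I^*\alpha)=f_*(\omega_{X/Y}^{n+2})\otimes\omega_Y^{n+2}\otimes g^*\beta$, whose fibre at a general $y$ is $H^0(F,\omega_F^{n+2})$ by cohomology and base change (the good minimal model from AS(1) supplying the needed flatness), and one must show this sheaf is globally generated at $y$. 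This is exactly where AS(2) enters, through the positivity $\deg(f_{C*}\omega_{X/C}^{m})>0$, which, after pushing to $A$ and again invoking the $V^0$-largeness from Theorem \ref{main}, upgrades to $M$-regularity and hence to global generation of $f_*(\omega_{X/Y}^{n+2})\otimes\omega_Y^{n+2}\otimes g^*\beta$ at general points; thus the separating section on $F$ extends to $X$.

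The main obstacle I anticipate is this propagation step in the fibre case: turning the fibrewise statement into a global one while carrying the twist $I^*\alpha$ and controlling base loci uniformly in $\alpha$. Concretely, one must show that the positivity supplied by AS(2) combines with generic vanishing and Theorem \ref{main} to make the relevant direct image $M$-regular, and that one extra power of $\omega_X$ promotes continuous global generation to honest global generation, so that the conclusion holds for every $\alpha\in\mathrm{Pic}^0(S)$ rather than only for general $\alpha$. Verifying these positivity and $M$-regularity statements, and checking that the two cases glue to a map whose Stein factorization is birational to $I$, is where the real work lies; the remaining steps are routine applications of the Pareschi--Popa package.
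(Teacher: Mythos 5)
Your toolkit is the right one (generic vanishing, M-regularity and CGG, Theorem \ref{main} applied to $a$, hypothesis (2) for fibrewise separation), and your reduction to separating two general points while contracting Iitaka fibres is fine. But the step you yourself flag as ``where the real work lies'' is a genuine gap, and in the form you propose it cannot be closed. In your fibre case ($a(x_1)=a(x_2)$) you want, for \emph{every} $\alpha$, global generation at a fixed general $y$ of the twisted direct image $f_*(\omega_{X/Y}^{n+2})\otimes\omega_Y^{n+2}\otimes g^*\beta$, and you propose to deduce it from ``M-regularity, hence global generation at general points.'' M-regularity only yields \emph{continuous} global generation (Proposition \ref{cgg}), i.e.\ surjectivity of the evaluation summed over an open set of twists; it never gives generation of each individual twist at a fixed point. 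Indeed the intermediate statement you aim for is false in general: a principal polarization $\Theta$ on an abelian variety is $IT^0$, hence M-regular, yet for any fixed point $p$ there exists $\beta\in\mathrm{Pic}^0$ with $p\in\mathrm{Bs}|\Theta\otimes\beta|$ (translate the theta divisor through $p$) --- the bad twist moves with the point, so no positivity of the sheaf alone can give uniformity in $\alpha$. A secondary misattribution compounds this: AS(2) does not directly supply this positivity; in the paper it enters only through Theorem \ref{main}, whose output is the statement that the translates of the components of $V^0(\omega_X,a)$ generate $\mathrm{Pic}^0(S)$.

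The paper closes exactly this gap, and does so without your case split, by making the ideal-sheaf twist the central object. One shows: (I) $a_*\omega_X^2$ is $IT^0$, using \cite{J} Lemma 4.2 together with the identity $a_*(\omega_X^{2}\otimes\mathcal{I}(||K_X||))=a_*\omega_X^{2}$ coming from the good minimal model (AS(1)); and (II) $a_*(I_x\otimes\omega_X^n)$ is M-regular for general $x$, by the argument of \cite{Ti} and \cite{JS} Proposition 5.2 --- this is where the $V^0$-generation statement from Theorem \ref{main} is consumed. Both sheaves are then CGG, and hypothesis (2) gives surjectivity of the evaluation $a^*a_*(I_x\otimes\omega_X^n)\rightarrow\mathbb{C}(y)$ (this one sheaf handles simultaneously the case $a(x)\neq a(y)$ and the case of two points in the same fibre separated by $I$). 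Hence for general $\beta\in\mathrm{Pic}^0(S)$ there exist $t_{-\beta}\in H^0(X, I_x\otimes\omega_X^n\otimes I^*\beta^{-1})$ and $s_{\beta}\in H^0(X,\omega_X^2\otimes I^*\alpha\otimes I^*\beta)$ both nonvanishing at $y$, and the product $t_{-\beta}\otimes s_{\beta}\in H^0(X, I_x\otimes\omega_X^{n+2}\otimes I^*\alpha)$ separates $x$ from $y$. The auxiliary twist $\beta$ is chosen generally, \emph{depending} on $(x,y,\alpha)$, and cancels in the product: that cancellation, not global generation of any single sheaf, is what makes the conclusion hold for every $\alpha$. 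Your Case A paragraph already gestures at this two-CGG-factor trick; the repair is to run it globally with the factor $I_x\otimes\omega_X^n$ (note also that hypothesis (2) is only ever needed for $nK_F$, so your claim that $\phi_{|(n+2)K_F|}$ is again birational to the Iitaka fibration of $F$, while true since $|2K_F|\neq\emptyset$, becomes unnecessary).
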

\begin{Remark}
In the theorem above, if assuming $m>1$ in assumption (1), the corresponding result is not better than that of \cite{CH4} Theorem 2.8.
\end{Remark}

For the assumptions AS(1, 2), we remark the following facts.

(1)
Assumption $AS(1)$ is needed when proving that $(\mathrm{alb}_X)_*\omega_X^m$ is a GV-sheaf. The existence of good minimal models is still a conjecture, it has been proved for the varieties of general type or of m.A.d.. Please refer to \cite{Lai} for the newest results.

(2)
If $f: X \rightarrow Y$ is fibred by curves of genus $\geq 2$ or points, then AS(1, 2) are satisfied for $f$ and $m=1$ (\cite{BPV} Chap. III Theorem 18.2). So Theorem \ref{chjs} is a special case of Theorem \ref{main}. For a smooth variety $X$ of general type, if the Albanese fibers are points or curves of genus $\geq 3$, applying Theorem \ref{pcm}, we get that $\phi_{|4K_X \otimes \alpha|}$ is birational for every $\alpha \in \mathrm{Pic}^0(X)$, which coincides with the results of \cite{Ti} and \cite{JS} Theorem 5.3 for the case $g=3$.

(3)
The assumption AS(2) is stronger than WAS(2). By \cite{BPV} Chap. III Theorem 18.2, the assumption WAS(2) is satisfied for an elliptic fibration and $m=1$, but AS(2) may fail. An example is constructed as follows. Take an elliptic curve $E$, a translate $t_a$ where $a \in E$ is a torsion point of order two and a curve $\tilde{C}$ granted an involution $\sigma$ with fixed point. Let $X = (E \times \tilde{C})/<t_a \times \sigma>$ and $C = \tilde{C}/\sigma$. For the natural fibration $f: X \rightarrow C$, we have $\deg(f_*\omega_{X/C}) = 0$ (see the proof of Lemma \ref{istfc1} or \cite{BPV} Chap. III Theorem 18.2), and AS(2) fails for $f$ and $m =1$ since $f$ has multiple fibers.

(4)
The assumption AS(2) is necessary for Theorem \ref{main}: in Section \ref{eg}, we construct a variety $X$ such that $\mathrm{alb}_X$ is an elliptic fibration and the conclusion of Theorem \ref{main} fails for $V^0(\omega_X, \mathrm{alb}_X)$.

Comparing the assumptions AS(2) and WAS(2), we have
\begin{Theorem}\label{istf}
Let $f: X \rightarrow Y$ be a fibration, $F$ a general fiber and $m$ a positive integer. Assume that AS(1) is satisfied for $f$ and $m$.
If either $m = 1$ and the automorphism group $\mathrm{Aut}(F)$ acts faithfully on $H^0(F, \omega_F)$, or $m>1$,
then  WAS(2) is equivalent to AS(2).
\end{Theorem}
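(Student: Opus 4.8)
The implication AS(2) $\Rightarrow$ WAS(2) is immediate, since an \'etale cover is in particular a flat base change; this is already the easy half of Remark (3). So the entire content is the reverse implication WAS(2) $\Rightarrow$ AS(2), and I would argue it one curve at a time. Fix a smooth $C \subset Y$ with $X_C$ smooth and $f_C$ flat, and write $h = f_C : X_C \to C$. By AS(1) the sheaf $h_*\omega^m_{X_C/C}$ is nef (the Fujita-type semipositivity underlying the GV property quoted after Theorem \ref{main}), so $\deg h_*\omega^m_{X_C/C} \geq 0$; hence AS(2) can only fail at $C$ when $\deg h_*\omega^m_{X_C/C} = 0$, and in that case WAS(2) already gives that $h$ is birationally isotrivial. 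The plan is to upgrade this to an \emph{\'etale} trivialization. First I would fix the standard structure of an isotrivial fibration: after passing to the Galois closure of the trivializing base change and dividing out the (ineffective) kernel, there is a finite Galois cover $\pi : \tilde{C} \to C$ with group $G$, a \emph{faithful} action $G \hookrightarrow \mathrm{Aut}(F)$, and a birational identification $X_C \sim_{\mathrm{bir}} (F \times \tilde{C})/G$ with $G$ acting diagonally. Now $\pi$ is \'etale exactly when $G$ acts freely on $\tilde{C}$, i.e. when $\pi$ has no branch points, and in that case base change by $\pi$ trivializes $h$ (for an \'etale Galois cover $\tilde{C}\times_C\tilde{C} = \bigsqcup_{g\in G}\tilde{C}$, whence $X_C\times_C\tilde{C}\sim_{\mathrm{bir}} F\times\tilde{C}$), which is precisely the conclusion of AS(2). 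So it suffices to show that $\deg h_*\omega^m_{X_C/C} = 0$ forces $\pi$ to be \'etale.

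The heart of the matter is to write $\deg h_*\omega^m_{X_C/C} = \sum_p c_p$ as a sum of non-negative local contributions over the branch points $p$ of $\pi$, and to pin down when $c_p = 0$. Over a disc around such a $p$ one has $X_C \sim_{\mathrm{bir}} (F \times \Delta_t)/\langle g_p\rangle$, where $g_p \in G$ is the local monodromy, of order $n_p$, and $s = t^{n_p}$. Because $G$ acts faithfully on $F$, the element $g_p$ has order $n_p$ as an automorphism of $F$, and the general point of the central fibre has trivial stabiliser; hence the fibre of $h$ over $p$ is a genuine multiple fibre of multiplicity $n_p$. Using the canonical bundle formula for $h$ and the same bookkeeping as in Lemma \ref{istfc1}, the contribution $c_p$ is bounded below by a \emph{multiple-fibre term} $\lfloor m(n_p-1)/n_p\rfloor$ coming from the relative dualizing sheaf, and for $m=1$ it equals the \emph{age term} $\sum_j \langle a_j/n_p\rangle$ recording the eigenvalues $e^{2\pi i a_j/n_p}$ of $g_p$ on $W := H^0(F,\omega_F^m)$.

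This yields the dichotomy that separates the two cases of the theorem. If $m \geq 2$ and $g_p \neq \mathrm{id}$, then $n_p \geq 2$ and the multiple-fibre term already satisfies $\lfloor m(n_p-1)/n_p\rfloor \geq \lfloor 2\cdot\tfrac12\rfloor = 1 > 0$, so $c_p > 0$ with no further hypothesis; thus $c_p = 0 \Rightarrow g_p = \mathrm{id}$. If $m = 1$, the multiple-fibre term is $\lfloor (n_p-1)/n_p\rfloor = 0$, so $c_p$ reduces to the age term and $c_p = 0$ only says that $g_p$ acts trivially on $W = H^0(F,\omega_F)$; here the hypothesis that $\mathrm{Aut}(F)$ acts faithfully on $H^0(F,\omega_F)$ is exactly what promotes this to $g_p = \mathrm{id}$. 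In either case $\deg h_*\omega^m_{X_C/C} = \sum_p c_p = 0$ forces every local monodromy to be trivial, so $\pi$ has no branch point, is \'etale, and AS(2) holds at $C$. Ranging over all admissible $C$ gives WAS(2) $\Rightarrow$ AS(2).

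The main obstacle is the local computation of the previous two paragraphs: cleanly separating the age contribution from the relative-dualizing (multiple-fibre) contribution at a branch point of an isotrivial pluricanonical family, verifying that both are non-negative, and proving that their sum is strictly positive whenever the local monodromy is a non-trivial automorphism of $F$ for $m \geq 2$. This is precisely where the example of Remark (3) lives: a free translation $t_a$ of an elliptic fibre acts trivially on $H^0(\omega_F)$ and so contributes $0$ for $m=1$ (AS(2) genuinely fails there), whereas for $m\geq 2$ the same automorphism creates a multiple fibre contributing $\lfloor m/2\rfloor>0$. The two cases of the theorem must therefore be cut exactly along the line between automorphisms detected by the canonical representation and those detected only by the ramification of $\pi$.
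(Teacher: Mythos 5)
Your overall architecture is the same as the paper's: the easy direction is identical, and for the converse you pass, exactly as the paper does, from WAS(2) to the isotrivial structure $X_C \sim_{\mathrm{bir}} (F\times\tilde{C})/G$ with $G$ acting faithfully on $F$ (Sec.~\ref{fbr}), and then argue that $\deg f_{C*}\omega^m_{X_C/C}=0$ forces $\pi:\tilde{C}\to C$ to be \'{e}tale, splitting into an eigenvalue (``age'') computation for $m=1$ and a multiple-fibre positivity statement for $m>1$. Your $m=1$ half is correct and is precisely the paper's Lemma \ref{istfc1}, including the way faithfulness of $\mathrm{Aut}(F)$ on $H^0(F,\omega_F)$ is used to promote ``$g_p$ acts trivially on $H^0(F,\omega_F)$'' to ``$g_p=\mathrm{id}$''.

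The gap is in your $m\geq 2$ local bound $c_p \geq \lfloor m(n_p-1)/n_p\rfloor$. The ``same bookkeeping as in Lemma \ref{istfc1}'' computes the lattice of $G$-invariant relative $m$-canonical forms of the product model, i.e.\ sections of the reflexive power $\omega^{[m]}$ on the quotient $(F\times\tilde{C})/G$. For $m=1$ this lattice coincides with $f_{C*}\omega_{X_C/C}$ computed on any smooth birational model (this is Lemma \ref{vr} (3) and Lemma \ref{gai}: quotient singularities are klt, and invariant \emph{canonical} forms always extend over a resolution), which is what legitimizes Lemma \ref{istfc1}. For $m\geq 2$ this step fails: the quotient has klt but in general \emph{non-canonical} singularities at points (fixed point of $g_p$ on $F$) $\times$ (ramification point) --- for instance a K3 fibre with a non-symplectic automorphism of order $5$ produces $\frac{1}{5}(3,3,1)$-points, which are not canonical --- and then the true sheaf $f_{C*}\omega^m_{X_C/C}$ (computed on a resolution, which is what AS(2)/WAS(2) refer to) is a \emph{proper subsheaf} of the invariant lattice. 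Hence your age-plus-multiple-fibre decomposition only gives an \emph{upper} bound for $\deg f_{C*}\omega^m_{X_C/C}$, while you need a lower bound: a priori the negative discrepancy corrections coming from resolving these singularities could cancel the multiple-fibre term. This is exactly the difficulty the paper's Lemma \ref{istfc2} is built to avoid: rather than a local eigenvalue count, it factors $\omega^m$ as (the pullback of $\omega^{m-1}_{\bar{Y}/C}$ twisted by $(m-1)F''_P$) tensored with one honest copy of $\omega_{Y''/C}$ of the resolution --- the single $m=1$ factor absorbs the resolution issues via the trace map and Lemma \ref{vr} --- and produces an injection $\oplus^{r_m}\mathcal{O}_C((m-1)P) \hookrightarrow \pi_0^*f_*\omega^m_{X/B}$, whence $\deg>0$. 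To complete your proof you must either justify your local bound for the actual resolution pushforward (controlling the discrepancy terms), or replace that step by an argument of the paper's type.
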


Recall that for a fibration $f: X \rightarrow C$ from a smooth projective variety to a smooth curve, it is known that $f_*\omega_{X/C}^m$ is weakly positive (\cite{Vie2} Thmeorem III), hence $\deg(f_*\omega^m_{X/C})\geq 0$; and with the assumption that a general fiber $F$ has a good minimal model $F_0$, there exists an integer $m >1$, which depends on an $m_0$ such that $|m_0K_{F_0}|$ is base point free (\cite{Ka} Theorem 1.1, Lemma 6.1 and Sec. 7), such that if $\deg(f_*\omega^m_{X/C})= 0$ then $f$ is birationally isotrivial. Therefore, for a fibration with fibers having good minimal models, there always exists $m$ such that the assumptions AS(1, 2) hold.

It is of special interest to consider the problem whether (W)AS(2) is true for certain fibrations and $m=1$, which relates to the numerical criterion for whether a fibration is birationally isotrivial.
We propose the following problem
\begin{Problem}
Let $f: X \rightarrow C$ be a fibration to a curve with general fibers having good minimal models and $\deg(f_*\omega_{X/C}) = 0$. Is the fibration $f$ birationally isotrivial under one of the following conditions?

(1) general fibers are of maximal Albanese dimension;

(2) for a general fiber $F$ the Cox ring $\oplus_{m = 0}^{m= +\infty} H^0(F, mK_F)$ is generated by $H^0(F, K_F)$.
\end{Problem}

The numerical criterion works for the fibrations fibred by curves of genus $g \geq 1$ which are of type (1). We prove that it also works for the fibrations fibred by varieties with trivial canonical bundle  which are of type (2).

\begin{Theorem}[Theorem \ref{K0}]
Let $X$ be a smooth projective variety and $f: X \rightarrow C$ a fibration to a smooth projective curve. Suppose that a general fiber $F$ of $f$ has good minimal models with trivial canonical bundles. If $\deg(f_*\omega_{X/C}) = 0$, then $f$ is birationally isotrivial.

If moreover the automorphism group $\mathrm{Aut}(F)$ acts faithfully on $H^0(F, \omega_F)$, then there exists an \'{e}tale cover $\pi: \tilde{C} \rightarrow C$ such that $X \times_C \tilde{C}$ is birational to $F \times \tilde{C}$.
\end{Theorem}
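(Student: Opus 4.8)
The plan is to reduce the statement to a rank-one Hodge-theoretic computation and then read off isotriviality from the vanishing of the Kodaira--Spencer map.

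First I would record the key numerical reduction. Since a general fiber $F$ is birational to a good minimal model $F_0$ with $\omega_{F_0}\cong\mathcal{O}_{F_0}$, and $h^0(-,\omega)$ is a birational invariant of smooth projective varieties, we get $p_g(F)=h^0(F_0,\omega_{F_0})=h^0(F_0,\mathcal{O}_{F_0})=1$. Hence $f_*\omega_{X/C}$ is a torsion-free sheaf of rank one on the smooth curve $C$, i.e.\ a line bundle $L:=f_*\omega_{X/C}$, and by hypothesis $\deg L=0$. By Viehweg's weak positivity (\cite{Vie2}) $L$ is nef, so it is a numerically trivial line bundle. The whole point is that the Calabi--Yau-type hypothesis lets us work with $m=1$, where $f_*\omega_{X/C}$ is exactly a Hodge bundle, rather than passing to a high power as in \cite{Ka}.

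Second, I would identify $L$ with the top Hodge bundle $\mathcal{F}^n$ of the variation of Hodge structure $R^n f_*\mathbb{C}$ ($n=\dim F$) over the smooth locus $U\subseteq C$ of $f$, and invoke Fujita's decomposition theorem together with Deligne's semisimplicity: a numerically trivial nef subbundle of $f_*\omega_{X/C}$ underlies a unitary flat subsystem coming from the fixed part of the polarized VHS. Since $L$ is all of $f_*\omega_{X/C}$ and has degree $0$, the Hodge line bundle $\mathcal{F}^n$ is flat for the Gauss--Manin connection; equivalently the Higgs field $\theta\colon\mathcal{F}^n\to(\mathcal{F}^{n-1}/\mathcal{F}^n)\otimes\Omega^1_U$ vanishes. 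This is also visible from Griffiths' curvature formula, where $\deg\mathcal{F}^n$ is computed by $\int_U\|\theta\|^2\geq 0$, which must therefore vanish.

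Third, I would convert the vanishing of $\theta$ into the vanishing of the relative Kodaira--Spencer map. On the minimal model the canonical bundle is trivial, so the generating $n$-form $\Omega$ is nowhere vanishing and contraction $v\mapsto v\lrcorner\Omega$ is an isomorphism $T_{F_0}\xrightarrow{\sim}\Omega^{n-1}_{F_0}$; the top-piece Higgs field is precisely this contraction applied to the Kodaira--Spencer class $\rho(\partial_t)\in H^1(F_0,T_{F_0})$. Thus $\theta=0$ forces $\rho\equiv 0$ over $U$, so the smooth part of the family is infinitesimally rigid, hence locally analytically trivial and therefore isotrivial: the isomorphism class of the generic fiber is constant. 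Spreading this out yields a finite flat base change $\tilde C\to C$ with $X\times_C\tilde C$ birational to $F\times\tilde C$, i.e.\ $f$ is birationally isotrivial. I expect the main obstacle to lie precisely here: $F$ itself is only \emph{birational} to the Calabi--Yau-type $F_0$ (the canonical form of $F$ vanishes along the exceptional locus, so the contraction isomorphism holds only on $F_0$), so the argument must be carried out on the relative minimal model and transported back birationally, handling the canonical singularities of $F_0$ in the Hodge-theoretic step.

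Finally, for the refinement I would upgrade the flat base change to an étale cover using the hypothesis that $\mathrm{Aut}(F)$ acts faithfully on $H^0(F,\omega_F)$. Since this space is one-dimensional, faithfulness embeds $\mathrm{Aut}(F)$ into $\mathbb{C}^\times$, and compatibility with the integral structure on $H^n(F,\mathbb{Z})$ forces the image to be finite; so an isotrivial family with fiber $F$ is classified by a representation $\rho\colon\pi_1(U)\to\mathrm{Aut}(F)$ with finite image, whose monodromy is detected by that of the flat unitary line bundle $L$. The étale cover $\tilde U\to U$ corresponding to $\ker\rho$ trivializes the family, and the degree-zero (no variation) condition guarantees that the local monodromies at the points of $C\setminus U$ also lie in $\ker\rho$, so $\tilde U\to U$ extends to an étale cover $\pi\colon\tilde C\to C$ with $X\times_C\tilde C$ birational to $F\times\tilde C$. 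The delicate point in this last step is the monodromy bookkeeping at the boundary needed to keep the cover unramified over all of $C$.
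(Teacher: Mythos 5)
Your overall strategy is the same as the paper's: pass to the variation of Hodge structure on the smooth locus, use $\deg(f_*\omega_{X/C})=0$ to force the Higgs field to vanish (the paper does this through a unipotent base change, Lemma \ref{bsch}, and Kawamata's positivity theorem, Lemma \ref{bigness}; your Fujita/Griffiths-curvature route is an acceptable equivalent), and then invoke infinitesimal Torelli for $K$-trivial fibers to kill the Kodaira--Spencer class. The genuine gap sits exactly at the step you flag and then defer. The VHS, the Higgs field $\theta$, and the Kodaira--Spencer class $\lambda_t$ all live on the \emph{smooth} fiber $F$, which is only a resolution $\mu: F \rightarrow \bar{F}$ of the $K$-trivial minimal model and is not itself $K$-trivial: its canonical form vanishes along the $\mu$-exceptional divisors, so contraction with the $n$-form is \emph{not} an isomorphism $T_F \rightarrow \Omega^{n-1}_F$, and the cup-product map $\delta: H^1(F,T_F) \rightarrow \mathrm{Hom}(H^0(F,\Omega^n_F), H^1(F,\Omega^{n-1}_F))$ has no obvious injectivity. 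Nor can one simply ``carry out the Hodge-theoretic step on the relative minimal model,'' since the minimal model family is singular and carries no VHS in the usual sense. The paper's actual work is precisely the transport mechanism you omit: it takes a Kawamata \emph{equivariant} resolution, which yields an injection $\beta: H^1(F,T_F) \rightarrow \mathrm{Ext}^1(\Omega^1_{\bar{F}}, \mathcal{O}_{\bar{F}})$; it then uses rational singularities plus Grothendieck duality to identify $\mathrm{Ext}^1(\mu^*\Omega^1_{\bar{F}}, \omega_F) \cong \mathrm{Ext}^1(\Omega^1_{\bar{F}}, \omega_{\bar{F}})$, producing a commutative square in which $\alpha$ (the cup product on $\bar{F}$, essentially the identity since $\omega_{\bar{F}} \cong \mathcal{O}_{\bar{F}}$) and $\beta$ are injective, whence $\delta$ itself is injective. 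Without this (or a substitute), the inference ``$\theta = 0$ forces $\lambda_t = 0$'' is unjustified, and it is the crux of the theorem.

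The ``moreover'' part has the right shape but the decisive point is again asserted rather than proved. In the paper, birational isotriviality plus the structure result of Section \ref{fbr} writes $X$ birationally as $(\bar{F}\times Z)/G$ with $G$ acting faithfully on $\bar{F}$, and Lemma \ref{istfc1} computes $\deg(f_*\omega_{X/C})$ explicitly in terms of the characters $n_{P,i}$ by which the stabilizers $G_P$ act on $H^0(\bar{F},\omega_{\bar{F}})$ at ramification points; degree zero forces all $n_{P,i}=0$, and faithfulness of $\mathrm{Aut}(F)$ on $H^0(F,\omega_F)$ then makes every $G_P$ trivial, so the cover is \'{e}tale. Your claim that ``the degree-zero condition guarantees that the local monodromies lie in $\ker\rho$'' is exactly the content of that local computation, which your proposal does not supply.
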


\section{Preliminaries}\label{tool}
\subsection{Fourier-Mukai transform}
If $A$ denotes an abelian variety, then $\hat{A}$ denotes its dual $\mathrm{Pic}^0(A)$, $\mathcal{P}$ denotes the Poincar\'{e} line bundle on
$A \times \hat{A}$, and the Fourier-Mukai transform $R\Phi_{\mathcal{P}}: D^b(A) \rightarrow D^b(\hat{A})$ w.r.t. $\mathcal{P}$ is defined as
$$R\Phi_{\mathcal{P}}(\mathcal{F}) := R(p_2)_*(Lp_1^*\mathcal{F} \otimes \mathcal{P})$$
where $p_1$ and $p_2$ are the projections from $A \times \hat{A}$ to $A$ and $\hat{A}$ respectively.
Similarly $R\Psi_{\mathcal{P}}: D^b(\hat{A}) \rightarrow D^b(A)$ is defined as
$$R\Psi_{\mathcal{P}}(\mathcal{F}) := R(p_1)_*(Lp_2^*\mathcal{F} \otimes \mathcal{P})$$

\begin{Theorem}[\cite{Mu}, Thm. 2.2]\label{Mu} Let $A$ be an abelian variety of dimension $d$. Then
$$R\Psi_{\mathcal{P}} \circ R\Phi_{\mathcal{P}} = (-1)_A^*[-d]~\mathrm{and}~R\Phi_{\mathcal{P}} \circ R\Psi_{\mathcal{P}} = (-1)_{\hat{A}}^*[-d].$$
\end{Theorem}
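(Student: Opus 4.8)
The plan is to realize each composition as a single integral functor and then compute its kernel explicitly. Recall the general principle that the composite of two integral transforms with kernels $K_1$ on $A \times \hat{A}$ and $K_2$ on $\hat{A} \times A$ is again an integral transform, with kernel $R p_{13 *}(p_{12}^* K_1 \otimes^{L} p_{23}^* K_2)$ on $A \times A$, where $p_{ij}$ denote the projections from the triple product $A \times \hat{A} \times A$; this follows from flat base change and the projection formula. Applying this to $K_1 = K_2 = \mathcal{P}$ (using the canonical identification $\hat{\hat{A}} = A$ to regard the kernel of $R\Psi_{\mathcal{P}}$ as the Poincar\'{e} bundle on $\hat{A} \times A$), the task reduces to identifying the kernel
$$\mathcal{K} := R p_{13 *}\left(p_{12}^* \mathcal{P} \otimes p_{23}^* \mathcal{P}\right).$$

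The main structural input is the biadditivity of the Poincar\'{e} bundle. Writing a point of $A \times \hat{A} \times A$ as $(a, \xi, b)$, the seesaw and theorem-of-the-square properties give $p_{12}^* \mathcal{P} \otimes p_{23}^* \mathcal{P} \cong \sigma^* \mathcal{P}$, where $\sigma(a, \xi, b) = (a + b, \xi)$. Thus the restriction of this line bundle to the fiber of $p_{13}$ over $(a, b)$ is the degree-zero line bundle on $\hat{A}$ corresponding to the point $a + b \in A = \hat{\hat{A}}$. I would then invoke the fundamental vanishing: a nontrivial element of $\mathrm{Pic}^0(\hat{A})$ has no cohomology in any degree, while the trivial bundle $\mathcal{O}_{\hat{A}}$ satisfies $H^i(\hat{A}, \mathcal{O}_{\hat{A}}) = \wedge^i H^1(\hat{A}, \mathcal{O}_{\hat{A}})$. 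Consequently $\mathcal{K}$ is supported set-theoretically on the antidiagonal $\Gamma = \{(a, -a)\} \subset A \times A$, the graph of $(-1)_A$.

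To pin down $\mathcal{K}$ exactly --- not merely its support --- I would use cohomology and base change. The cleanest route is first to establish the relative statement $R (p_2)_* \mathcal{P} = k(0)[-d]$ on $\hat{A}$ (the skyscraper at the origin placed in cohomological degree $d$), which is exactly the fiber computation above carried out over the base: the fiberwise cohomology vanishes away from $0 \in \hat{A}$ and is nonzero only there, and base change identifies the single surviving higher direct image, in degree $d$, with the structure sheaf of the origin. Transporting this computation across the antidiagonal via $\sigma$ then yields $\mathcal{K} \cong \mathcal{O}_{\Gamma}[-d]$. Since the integral transform on $A$ with kernel $\mathcal{O}_{\Gamma}$ is precisely pullback under $(-1)_A$, we obtain $R\Psi_{\mathcal{P}} \circ R\Phi_{\mathcal{P}} = (-1)_A^*[-d]$.

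Finally, the second identity $R\Phi_{\mathcal{P}} \circ R\Psi_{\mathcal{P}} = (-1)_{\hat{A}}^*[-d]$ follows by the identical argument with the roles of $A$ and $\hat{A}$ interchanged, using $\hat{\hat{A}} = A$ and the symmetry of $\mathcal{P}$. I expect the main obstacle to be the exact identification in the preceding paragraph: because the fiberwise cohomology of $\mathcal{P}$ jumps at the origin, the passage from ``supported on $\Gamma$'' to ``equal to $\mathcal{O}_{\Gamma}[-d]$'' demands the careful base-change bookkeeping that produces both the correct cohomological shift $[-d]$ and the triviality --- rather than some nontrivial twist --- of the resulting line bundle on $\Gamma$.
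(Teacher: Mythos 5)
Your proposal is correct, and it is essentially Mukai's original argument, which the paper simply cites as \cite{Mu}, Thm.\ 2.2 rather than proving: compose the two transforms into a single integral functor, identify $p_{12}^*\mathcal{P}\otimes p_{23}^*\mathcal{P}\cong\sigma^*\mathcal{P}$ by seesaw, and reduce by flat base change along $(a,b)\mapsto a+b$ to the key lemma $R(p_2)_*\mathcal{P}\cong k(\hat{0})[-d]$, so that the kernel becomes $\mathcal{O}_\Gamma[-d]$ on the graph of $(-1)_A$. The one step you flag as delicate --- concentration in degree $d$ and the length-one (reduced) skyscraper --- is exactly the content of the classical computation in Mumford's \emph{Abelian Varieties}, \S 13 (nontrivial $\alpha\in\mathrm{Pic}^0$ has no cohomology, plus $H^i(A\times\hat{A},\mathcal{P})=0$ for $i\neq d$ and $\cong k$ for $i=d$), which is what Mukai himself invokes; note also that once this relative statement is in hand, the triviality of the bundle on $\Gamma$ is automatic from flat base change, since $\mathcal{O}_\Gamma$ arises as the pullback of $k(0)$ along the smooth map $m(a,b)=a+b$.
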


\subsection{GV-Sheaves, M-regular sheaves and CGG}
\begin{Definition}[\cite{PP2} Def. 2.1, 2.2, 5.2]
Given a coherent sheaf $\mathcal{F}$ on an abelian variety $A$, its $i^{\mathrm{th}}$ \emph{cohomological support locus} is defined as
$$V^i(\mathcal{F}): = \{\alpha \in \hat{A}| h^i(\mathcal{F} \otimes \alpha) > 0\}$$
The number $\mathrm{gv}(\mathcal{F}): = \mathrm{min}_{i>0}\{\mathrm{codim}_{\hat{A}}V^i(\mathcal{F}) - i\}$ is called the \emph{generic vanishing index} of $\mathcal{F}$, and
we say $\mathcal{F}$ is a \emph{GV-sheaf} if $\mathrm{gv}(\mathcal{F}) \geq 0$, and is \emph{M-regular} if $\mathrm{gv}(\mathcal{F}) > 0$, and is an \emph{$IT^0$-sheaf} if $V^i(\mathcal{F})=\emptyset$ for $i>0$.

We say $\mathcal{F}$ is \emph{continuously~ globally~ generated} (CGG) if the summation of the evaluation maps
$$\mathrm{ev}_U: \oplus_{\alpha \in U}H^0(\mathcal{F} \otimes \alpha) \otimes \alpha^{-1} \rightarrow \mathcal{F}$$
is surjective for any non-empty open set $U \subset \hat{A}$.
\end{Definition}

\begin{Theorem}[\cite{Ha} Thm. 1.2, Cor. 3.2 or \cite{PP2} Sec. 2]\label{btool}
Let $A$ be an abelian variety of dimension $d$ and $\mathcal{F}$ a GV-sheaf on $A$. Then
\begin{enumerate}
\item[(i)]{$\mathrm{Pic}^0(A)\supset V^0(\mathcal{F}) \supset V^1(\mathcal{F}) \supset ...\supset V^d(\mathcal{F})$, and $\mathcal{F} = 0$ if $V^0(\mathcal{F}) = \emptyset$;}
\item[(ii)]{$R\Phi_{\mathcal{P}}(R \Delta (\mathcal{F}))[d] \cong R^d\Phi_{\mathcal{P}}(R \Delta (\mathcal{F}))$ is a sheaf (where $R \Delta (\mathcal{F}) := R\mathcal{H}om(\mathcal{F},\mathcal{O}_A)$), which we denote by $\widehat{R \Delta (\mathcal{F})}$;}
\item[(iii)]{$\mathcal{E}xt_{\hat{A}}^i(\widehat{R \Delta (\mathcal{F})}, \mathcal{O}_{\hat{A}}) \cong (-1)_{\hat{A}}^*R^i\Phi_{\mathcal{P}}(\mathcal{F})$;}
\item[(iv)]{one direct summand of $\mathcal{F}$ is also a GV-sheaf.}
\end{enumerate}
\end{Theorem}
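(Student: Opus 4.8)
The engine of the whole statement is Grothendieck--Serre duality for the projection $p_2\colon A\times\hat A\to\hat A$. Since the fibres are copies of $A$ (so the relative dualising sheaf is trivial, of relative dimension $d$) and $\mathcal P^{-1}\cong(1_A\times(-1)_{\hat A})^*\mathcal P$, a direct computation from $R\mathcal{H}om(R(p_2)_*(-),\mathcal O_{\hat A})\cong R(p_2)_*R\mathcal{H}om(-,\mathcal O[d])$ applied to $Lp_1^*\mathcal F\otimes\mathcal P$ yields the exchange isomorphism
$$R\Delta\big(R\Phi_{\mathcal P}(\mathcal F)\big)\;\cong\;(-1)_{\hat A}^*\,R\Phi_{\mathcal P}\big(R\Delta(\mathcal F)\big)[d].\qquad(\star)$$
I will also use the standard cohomology-and-base-change fact that $\mathrm{Supp}\,R^i\Phi_{\mathcal P}(\mathcal F)=V^i(\mathcal F)$ inside $\hat A$, and that for a coherent sheaf $\mathcal G$ on $A$ one has $R^s\Phi_{\mathcal P}(\mathcal G)=0$ for $s>\dim\mathrm{Supp}(\mathcal G)$, since the fibres of $p_2$ over $\mathrm{Supp}(\mathcal G)$ have that dimension. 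Everything then reduces to (ii); the remaining parts follow formally.

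For (ii) I would prove $R^k\Phi_{\mathcal P}(R\Delta(\mathcal F))=0$ for $k\neq d$ in two halves. The vanishing for $k>d$ needs no hypothesis: in the spectral sequence $R^s\Phi_{\mathcal P}(\mathcal{E}xt^t(\mathcal F,\mathcal O))\Rightarrow R^{s+t}\Phi_{\mathcal P}(R\Delta(\mathcal F))$ one has, on a smooth $d$-fold, $\mathrm{codim}\,\mathrm{Supp}\,\mathcal{E}xt^t(\mathcal F,\mathcal O)\geq t$, hence $\dim\mathrm{Supp}\leq d-t$ and $R^s\Phi_{\mathcal P}(\mathcal{E}xt^t)=0$ for $s>d-t$, so only $s+t\leq d$ contributes. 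The vanishing for $k<d$ is where the GV hypothesis enters: by $(\star)$ it is equivalent to $\mathcal H^{<0}(R\Delta(R\Phi_{\mathcal P}\mathcal F))=0$, and the spectral sequence $\mathcal{E}xt^p(R^{-q}\Phi_{\mathcal P}\mathcal F,\mathcal O)\Rightarrow\mathcal H^{p+q}(R\Delta(R\Phi_{\mathcal P}\mathcal F))$ is controlled by $\mathcal{E}xt^p(R^i\Phi_{\mathcal P}\mathcal F,\mathcal O)=0$ for $p<\mathrm{codim}\,\mathrm{Supp}\,R^i\Phi_{\mathcal P}\mathcal F=\mathrm{codim}\,V^i(\mathcal F)$, which is $\geq i$ exactly by the GV condition; this kills every term in total degree $<0$. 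Hence $R\Phi_{\mathcal P}(R\Delta\mathcal F)$ is the single sheaf $\widehat{R\Delta(\mathcal F)}$ placed in degree $d$, which is (ii).

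Part (iii) is then immediate: applying $R\Delta$ to $R\Phi_{\mathcal P}(R\Delta\mathcal F)\cong\widehat{R\Delta(\mathcal F)}[-d]$ and using $(\star)$ on the left (with $\mathcal F$ replaced by $R\Delta\mathcal F$) together with $R\Delta\circ R\Delta\cong\mathrm{id}$ gives $R\Delta(\widehat{R\Delta\mathcal F})\cong(-1)_{\hat A}^*R\Phi_{\mathcal P}(\mathcal F)$; taking $i$-th cohomology sheaves yields $\mathcal{E}xt^i(\widehat{R\Delta\mathcal F},\mathcal O)\cong(-1)_{\hat A}^*R^i\Phi_{\mathcal P}(\mathcal F)$. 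Part (iv) is elementary and independent of the transform: if $\mathcal F=\mathcal F_1\oplus\mathcal F_2$, then $H^i(\mathcal F_j\otimes P_\alpha)$ is a direct summand of $H^i(\mathcal F\otimes P_\alpha)$, so $V^i(\mathcal F_j)\subseteq V^i(\mathcal F)$ and $\mathrm{codim}\,V^i(\mathcal F_j)\geq\mathrm{codim}\,V^i(\mathcal F)\geq i$; hence each summand is again GV (for summands in the derived sense one argues identically after noting that $R\Phi_{\mathcal P}$ and $R\Delta$ are additive).

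For (i), the inclusion $\mathrm{Pic}^0(A)\supseteq V^0$ is the definition, and once the nesting is known, $V^0=\emptyset$ forces every $V^i=\emptyset$, whence $R\Phi_{\mathcal P}\mathcal F=0$ and $\mathcal F\cong(-1)_A^*R\Psi_{\mathcal P}R\Phi_{\mathcal P}(\mathcal F)[d]=0$ by Theorem \ref{Mu}. The nesting $V^0\supseteq V^1\supseteq\cdots$ is the genuinely delicate point, and I expect it to be the main obstacle. Via (iii) and base change it becomes $\mathrm{Supp}\,\mathcal{E}xt^{i+1}(\widehat{R\Delta\mathcal F},\mathcal O)\subseteq\mathrm{Supp}\,\mathcal{E}xt^{i}(\widehat{R\Delta\mathcal F},\mathcal O)$, an inclusion that is \emph{false} for a general coherent sheaf (take a direct sum of structure sheaves of disjoint subvarieties of differing codimension); it holds here only because $\mathcal H:=\widehat{R\Delta\mathcal F}$ carries the extra rigidity that $R\Psi_{\mathcal P}(R\Delta\mathcal H)$ is concentrated in a single degree, namely equal to $\mathcal F[-d]$ by Theorem \ref{Mu}. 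I would exploit this by localising at a point of $\hat A$ and running a depth/local-cohomology argument on the stalk of $\mathcal H$, in which the single-degree inversion constraint propagates the vanishing of $\mathcal{E}xt^i$ to $\mathcal{E}xt^{i+1}$; an equivalent route is the generic-vanishing ``derivative complex'', showing that the linear complex $\{H^i(\mathcal F\otimes P_\alpha),\ \cup v\}$ is exact in the relevant range. This step, rather than the formal duality of $(\star)$, is where the real work lies.
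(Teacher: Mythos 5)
This is a background theorem that the paper itself does not prove---it is quoted with citations to \cite{Ha} and \cite{PP2}---so the benchmark is the cited Hacon/Pareschi--Popa arguments. Your treatment of (ii), (iii), (iv) follows exactly the Pareschi--Popa route and is sound: the Grothendieck--Serre duality exchange $(\star)$, the spectral sequence with $\mathrm{codim}\,\mathrm{Supp}\,\mathcal{E}xt^t(\mathcal F,\mathcal O_A)\geq t$ for the vanishing in degrees $>d$, the dual spectral sequence with $\mathcal{E}xt^p(\mathcal G,\mathcal O)=0$ for $p<\mathrm{codim}\,\mathrm{Supp}\,\mathcal G$ for the vanishing in degrees $<d$, and the formal dualization of (ii) to get (iii) are all correct; note that in the $k<d$ half you only ever use the inclusion $\mathrm{Supp}\,R^i\Phi_{\mathcal P}(\mathcal F)\subseteq V^i(\mathcal F)$, which is true, so that part survives. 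Part (iv) is correct and elementary, as you say.

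There are, however, two genuine problems in (i), which you yourself flag as the main obstacle. First, your ``standard cohomology-and-base-change fact'' $\mathrm{Supp}\,R^i\Phi_{\mathcal P}(\mathcal F)=V^i(\mathcal F)$ is false: only ``$\subseteq$'' holds. Take $\mathcal F=P_\beta$ a nontrivial element of $\mathrm{Pic}^0(A)$: then $V^i(\mathcal F)=\{\beta^{-1}\}$ for every $0\le i\le d$, while $R\Phi_{\mathcal P}(P_\beta)$ is a skyscraper concentrated in degree $d$, so $R^i\Phi_{\mathcal P}(\mathcal F)=0$ for all $i<d$. Consequently your reformulation of the nesting $V^{i+1}\subseteq V^i$ as $\mathrm{Supp}\,\mathcal{E}xt^{i+1}(\widehat{R\Delta\mathcal F},\mathcal O)\subseteq\mathrm{Supp}\,\mathcal{E}xt^{i}(\widehat{R\Delta\mathcal F},\mathcal O)$ is not equivalent to the statement to be proved---the same example shows the $V^i$ can be strictly larger than these supports. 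Second, even granting a correct reformulation, you never actually prove the nesting: ``I would exploit this by localising \dots{} and running a depth/local-cohomology argument'' is a declaration of intent, not an argument. The standard completion uses derived fibres rather than supports: Serre duality with $\omega_A\cong\mathcal O_A$ gives $H^i(\mathcal F\otimes P_\alpha)^*\cong\mathbb H^{d-i}\bigl(A,R\Delta\mathcal F\otimes P_{\alpha^{-1}}\bigr)$, and derived base change applied to $R\Phi_{\mathcal P}(R\Delta\mathcal F)\cong\widehat{R\Delta\mathcal F}\,[-d]$ from your (ii) identifies this with $\mathrm{Tor}_i^{\mathcal O_{\hat A,\alpha'}}\bigl((\widehat{R\Delta\mathcal F})_{\alpha'},k(\alpha')\bigr)$ at the appropriate point $\alpha'$. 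Thus $\alpha\in V^i(\mathcal F)$ if and only if this Tor is nonzero, and the chain $V^0\supseteq V^1\supseteq\cdots\supseteq V^d$ follows from the no-gap property of a minimal free resolution over the regular local ring $\mathcal O_{\hat A,\alpha'}$: if $\mathrm{Tor}_i(M,k)\neq 0$ then $\mathrm{Tor}_j(M,k)\neq 0$ for all $0\leq j\leq i$. With that substitution your outline closes up (and then your deduction that $V^0=\emptyset$ forces $\mathcal F=0$ via Theorem \ref{Mu} is fine); as written, the key assertion of (i) is unproved and its proposed reduction rests on a false identity.
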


\begin{Proposition}[\cite{PP2} Cor. 5.3]\label{cgg}
An M-regular sheaf on an abelian variety is CGG.
\end{Proposition}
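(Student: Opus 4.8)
The plan is to verify the \textbf{CGG} condition pointwise and to convert its failure into a violation of generic vanishing, where the \emph{strict} inequality built into M-regularity supplies the contradiction. Write $\mathcal{F}$ for the M-regular sheaf on the abelian variety $A$ of dimension $d$. Fix a nonempty open set $U \subset \hat{A}$; since $\hat{A}$ is irreducible, $U$ is dense. Surjectivity of the sheaf map $\mathrm{ev}_U$ can be checked on fibres (Nakayama), so it suffices to show that for every closed point $x \in A$ the images of the spaces $H^0(\mathcal{F}\otimes\alpha)$, $\alpha\in U$, span the fibre $\mathcal{F}\otimes k(x)$. Suppose not: then for some $x$ these images lie in a hyperplane $W\subset \mathcal{F}\otimes k(x)$, which produces a one-dimensional quotient $\mathcal{F}\twoheadrightarrow k(x)$ (through $(\mathcal{F}\otimes k(x))/W$) whose kernel $\mathcal{G}$ sits in
\begin{equation*}
0 \to \mathcal{G} \to \mathcal{F} \to k(x) \to 0,
\end{equation*}
and by construction the composite $H^0(\mathcal{F}\otimes\alpha)\to \mathcal{F}\otimes k(x)\to k(x)$ is zero for every $\alpha\in U$.

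Next I would read off the cohomological consequence. Twisting the sequence by $\alpha$ and using $H^0(k(x))=\mathbb{C}$, $H^{>0}(k(x))=0$, the long exact sequence shows that vanishing of $H^0(\mathcal{F}\otimes\alpha)\to k(x)$ forces an injection $\mathbb{C}=k(x)\hookrightarrow H^1(\mathcal{G}\otimes\alpha)$; hence $H^1(\mathcal{G}\otimes\alpha)\neq 0$ for all $\alpha\in U$. As $V^1(\mathcal{G})$ is closed and contains the dense set $U$, we get $V^1(\mathcal{G})=\hat{A}$, i.e.\ $\mathrm{codim}_{\hat{A}}V^1(\mathcal{G})=0$. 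Therefore it is enough to prove that $\mathcal{G}$ is a \emph{GV}-sheaf: for then $\mathrm{codim}_{\hat{A}}V^1(\mathcal{G})\ge 1$, which contradicts the previous line and establishes CGG.

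To show $\mathcal{G}$ is GV I would pass to the Fourier--Mukai transform of the derived dual, which is the form of generic vanishing furnished by Theorem \ref{btool}. Applying $R\Delta(-)=R\mathcal{H}om(-,\mathcal{O}_A)$ to the sequence and using $R\Delta(k(x))\cong k(x)[-d]$, then applying $R\Phi_{\mathcal{P}}$ and using that the skyscraper is an $IT^0$-sheaf whose transform is the line bundle $P_x:=\mathcal{P}|_{\{x\}\times \hat{A}}$, one obtains a distinguished triangle in which the cohomology sheaves of $R\Phi_{\mathcal{P}}(R\Delta(\mathcal{G}))$ are governed by a single boundary map
\begin{equation*}
\theta\colon P_x \longrightarrow \widehat{R\Delta(\mathcal{F})}.
\end{equation*}
Here $\mathcal{F}$ being GV guarantees that $\widehat{R\Delta(\mathcal{F})}=R^d\Phi_{\mathcal{P}}(R\Delta(\mathcal{F}))$ is an honest sheaf (Theorem \ref{btool}(ii)), and M-regularity guarantees it is \emph{torsion-free} (this I would derive from Theorem \ref{btool}(iii), which identifies $\mathcal{E}xt^i(\widehat{R\Delta(\mathcal{F})},\mathcal{O}_{\hat{A}})$ with $(-1)^*R^i\Phi_{\mathcal{P}}(\mathcal{F})$, whose support sits inside $V^i(\mathcal{F})$: the codimension bounds of M-regularity are exactly the homological criterion for torsion-freeness). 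Since $P_x$ is a line bundle on the integral variety $\hat{A}$ and the target is torsion-free, any nonzero $\theta$ is automatically injective; and if $\theta$ is injective the cone computation shows $R\Phi_{\mathcal{P}}(R\Delta(\mathcal{G}))$ is concentrated in degree $d$, i.e.\ $\mathcal{G}$ is GV, giving the desired contradiction.

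The main obstacle is therefore to rule out $\theta=0$, that is, to show the boundary map is nonzero for \emph{every} point $x$ and every one-dimensional quotient. This is precisely the step where mere generic vanishing is insufficient: torsion-freeness only upgrades ``nonzero'' to ``injective'', so one must genuinely produce the nonzero class. I expect to do this by identifying $\theta$ with the transform of the dualised surjection $\mathcal{F}\to k(x)$ and showing that $\theta=0$ would be incompatible with the reconstruction isomorphism $R\Psi_{\mathcal{P}}\circ R\Phi_{\mathcal{P}}=(-1)_A^*[-d]$ of Theorem \ref{Mu} (which recovers $\mathcal{F}\neq 0$ from its transform) together with the torsion-freeness of $\widehat{R\Delta(\mathcal{F})}$. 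In the purely cohomological formulation this same obstruction reappears as the seemingly circular demand that some section of a general twist $\mathcal{F}\otimes\alpha$ be nonvanishing in the chosen direction at $x$; the Fourier--Mukai and torsion-free argument is exactly what breaks that circularity.
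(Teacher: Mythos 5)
The paper offers no proof of its own here --- the proposition is quoted from Pareschi--Popa ([PP2] Cor.~5.3) --- and your proposal is a correct reconstruction of essentially the standard argument for that result: reduce failure of CGG at a point $x$ to a one-dimensional quotient $\mathcal{F} \twoheadrightarrow k(x)$ with kernel $\mathcal{G}$, observe $V^1(\mathcal{G}) = \hat{A}$, and contradict this by showing $\mathcal{G}$ is GV via injectivity of $\theta \colon P_x \rightarrow \widehat{R\Delta(\mathcal{F})}$, with M-regularity entering precisely through torsion-freeness of $\widehat{R\Delta(\mathcal{F})}$; your derivation of that torsion-freeness from Theorem \ref{btool}(iii) together with the homological criterion ($\mathcal{H}$ torsion-free iff $\operatorname{codim}\operatorname{Supp}\mathcal{E}xt^i(\mathcal{H},\mathcal{O}) \geq i+1$ for $i \geq 1$) is exactly right. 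The one step you flag as the ``main obstacle,'' namely $\theta \neq 0$, is in fact immediate and needs no separate construction: the surjection $\mathcal{F} \twoheadrightarrow k(x)$ is a nonzero morphism, $R\Delta$ is an involutive anti-equivalence so $R\Delta(k(x)) \rightarrow R\Delta(\mathcal{F})$ is nonzero, and $R\Phi_{\mathcal{P}}$ is an equivalence by Theorem \ref{Mu}, hence faithful; since $R\Phi_{\mathcal{P}}(R\Delta(k(x))) \cong P_x[-d]$ and $R\Phi_{\mathcal{P}}(R\Delta(\mathcal{F})) \cong \widehat{R\Delta(\mathcal{F})}[-d]$ are both concentrated in the single degree $d$, the resulting nonzero morphism in $D^b(\hat{A})$ is literally the sheaf map $\theta$. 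Two points you should make explicit rather than implicit: you invoke the converse of Theorem \ref{btool}(ii) (transform of $R\Delta(\mathcal{G})$ concentrated in degree $d$ implies $\mathcal{G}$ is GV), which is part of the equivalence in the cited [Ha] Thm.~1.2 but not literally what the paper's Theorem \ref{btool} states; and the inclusion $\operatorname{Supp} R^i\Phi_{\mathcal{P}}(\mathcal{F}) \subseteq V^i(\mathcal{F})$ requires the standard cohomology-and-base-change remark that $R^i\Phi_{\mathcal{P}}(\mathcal{F})$ vanishes on the open locus where the fiberwise $h^i$ vanish. With those two sentences added, your argument is complete.
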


\subsection{Good minimal models and the cohomological support locus of $\omega_X^m$}
On good minimal models recall that
\begin{Proposition}[\cite{Lai} Thm. 4.5]\label{gm}
Let $a: X \rightarrow A$ be a morphism from a smooth projective variety to an abelian variety. Assume that general fibers of $a$ have good minimal models. Then $X$ has a good minimal model.
\end{Proposition}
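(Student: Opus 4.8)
The plan is to reduce the absolute statement to a minimal model program run relative to $A$, and then to promote relative semiampleness to absolute semiampleness using the triviality of $K_A$. First I would normalize the setup: replacing $A$ by the abelian subvariety generated by $a(X)$ (and translating so the image contains the origin), and taking a Stein factorization followed by a birational modification, we may assume $a$ is a morphism with connected fibers whose general fiber $F$ carries a good minimal model by hypothesis. Since the generic fiber of $a$ then has a good minimal model, the existence theory for relative good minimal models — the relative MMP of Birkar–Cascini–Hacon–McKernan together with the result of Hacon–Xu that a fibration whose generic fiber admits a good minimal model admits a relative good minimal model — produces a birational model $X'$ of $X$ over $A$ on which $K_{X'}$ is semiample over $A$. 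Let $\phi : X' \to Z$ be the associated relative canonical model over $A$, so that $K_{X'} \sim_{\mathbb{Q}} \phi^* H$ with $H$ ample over $A$, and let $\pi : Z \to A$ be the structure morphism.

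Next I would descend the problem to $Z$ by the canonical bundle formula. Applying the Fujino–Mori formula to $\phi$, one writes $K_{X'} \sim_{\mathbb{Q}} \phi^*(K_Z + B + M)$ with $(Z, B)$ klt and $M$ a nef $\mathbb{Q}$-divisor (the moduli part); since $\phi$ is the relative canonical model map, $K_Z + B + M \sim_{\mathbb{Q}} H$ is $\pi$-ample and in particular big over $A$. Because $\phi$ is a contraction, i.e. $\phi_* \mathcal{O}_{X'} = \mathcal{O}_Z$, a semiample model of the generalized log canonical divisor of $(Z, B; M)$ pulls back through $\phi$ to a semiample canonical class; thus it suffices to produce a good minimal model of the generalized pair $(Z, B; M)$, and the birational model of $X$ induced over it (via a common resolution) will then be a good minimal model of $X$. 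Here the hypothesis that $A$ is abelian enters decisively through $K_A = 0$: the only positivity in play is the relative positivity of $H$ over $A$, with no negative contribution inherited from the base.

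The main obstacle is this final abundance step: I must show that a generalized klt pair $(Z, B; M)$ with $M$ nef and $K_Z + B + M$ big over an abelian variety $A$ admits a good minimal model on which the generalized log canonical class is semiample. This is precisely where the abelian structure of the base is essential, and I expect the argument to run through generic vanishing. Because $\pi : Z \to A$ targets an abelian variety and $K_Z + B + M$ is big over $A$, the Fourier–Mukai and generic vanishing techniques of Theorems \ref{Mu} and \ref{btool} (in the spirit of Chen–Hacon, Jiang and Fujino) should force the numerical and Iitaka dimensions of $K_Z + B + M$ to coincide, so that abundance holds and the MMP for $(Z, B; M)$ terminates with a good minimal model. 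I regard this promotion of relative semiampleness over $A$ to absolute semiampleness as the crux; once it is established, transferring the resulting semiample class back along $\phi$ and then back to $X$ yields the desired good minimal model, completing the proof.
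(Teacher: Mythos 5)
You should know at the outset that the paper itself contains no proof of Proposition \ref{gm}: it is quoted verbatim from \cite{Lai} Theorem 4.5, so the comparison is against Lai's argument. Your opening moves are sound and do parallel the beginning of that argument: running the MMP relative to $A$ and terminating in a relative good minimal model because the generic fiber has one (this is \cite{Lai} Theorem 2.12, the engine of Lai's own paper, rather than a Hacon--Xu result), then passing to the relative canonical model $\phi: X' \to Z$ over $A$ and applying the Fujino--Mori formula. The genuine gap is exactly the step you yourself call the crux: you assert that generic vanishing ``should force'' $\nu = \kappa$ and hence abundance for the generalized klt pair $(Z, B; M)$ with $K_Z + B + M$ big over $A$. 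Nothing in the toolkit you invoke delivers this. Theorem \ref{Mu}, Theorem \ref{btool} and Proposition \ref{gv} give that pushforwards such as $a_*\omega_X^m$ are GV-sheaves whose loci $V^0$ are nonempty (when the sheaf is nonzero) and consist of torsion translates of subtori; this yields nonvanishing statements --- e.g.\ $\kappa(X) \geq 0$, by taking a section of $\omega_X^m \otimes a^*\alpha$ with $\alpha$ torsion and raising it to the power of the order of $\alpha$ --- but it says nothing about the numerical dimension of $K_Z + B + M$, and there is no known mechanism converting GV-type statements into semiampleness of a class that is merely big over $A$. Worse, even granting $\nu = \kappa$, abundance criteria of Kawamata type apply to honest klt pairs, not to generalized pairs: for $(Z, B; M)$ semiampleness can fail against a non-b-semiample nef part, and the b-semiampleness of moduli parts is open in general, so your reduction lands you in strictly harder territory than the theorem you are trying to prove. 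Two secondary glosses compound this: the moduli part $M$ is only nef on a sufficiently high birational model (it is b-nef, not nef on the given $Z$), and your transfer step --- that a good minimal model of $(Z, B; M)$ induces one of $X$ --- requires running the canonical bundle formula compatibly on the models appearing in the MMP of $Z$, or a negativity-lemma argument; as stated it is not automatic.

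For contrast, Lai's actual route sidesteps the relative-to-absolute abundance problem entirely, and its shape is visible in Section 3 of the present paper. He first uses generic vanishing to secure $\kappa(X) \geq 0$; then the general Iitaka fiber $G$ of $X$ maps onto a translate of an abelian subvariety of $A$ (since $\kappa(G) \leq 0$, by the Ueno--Kawamata characterization of such images), and its fibers over that subvariety are Iitaka fibers of the Albanese fibers $F$, which inherit good minimal models because the canonical class of a good minimal model of $F$ restricts to a numerically trivial class on them (compare Section \ref{itk} (4)); induction on dimension then gives a good minimal model of $G$. Finally, his Theorem 4.4 reduces the existence of a good minimal model of $X$ via Fujino--Mori to a klt pair of log general type on the base of the (global) Iitaka fibration --- big absolutely, not merely over $A$ --- where the results of Birkar--Cascini--Hacon--McKernan apply directly. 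The lesson is that the abelian hypothesis is spent on nonvanishing and on the structure of Iitaka fibers, so that one only ever needs MMP results in the big case; your proposal instead defers all the difficulty to an unproven abundance statement over $A$, which is a conjecture-level assertion, not a proof.
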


\begin{Proposition}\label{gv}
Let $a: X \rightarrow A$ be a morphism from a smooth projective variety to an abelian variety. Suppose that $X$ has a good minimal model.
Then for any torsion line bundle $P \in \mathrm{Pic}(X)$ on $X$ (i.e. $nP \equiv \mathcal{O}_X$ for some integer $n>0$), the sheaf $a_*(\omega_X^m \otimes P)$ is a GV-sheaf on $A$, and every component of $V^i(a_*\omega_X^m),i\geq 0$ is a translate of a sub-torus of $\hat{A}$ via a torsion point.
\end{Proposition}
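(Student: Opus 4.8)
The plan is to split the statement into the generic-vanishing assertion and the structural assertion about the loci $V^i(a_*\omega_X^m)$, and to transport both to a good minimal model on which the canonical bundle is semi-ample. By hypothesis $X$ admits a good minimal model $X'$, so there is a birational contraction $g: X \dashrightarrow X'$ with $X'$ having $\mathbb{Q}$-factorial canonical singularities and $K_{X'}$ semi-ample. Every extremal ray contracted (or flipped) by the MMP producing $X'$ is spanned by a rational curve, which $a$ must send to a point since $A$ contains no rational curves; hence $a$ descends to a morphism $a': X' \to A$ with $a = a' \circ g$. Resolving $g$ by a smooth $W$ with projections $p: W \to X$, $q: W \to X'$ and writing $K_W = p^*K_X + E_p = q^*K_{X'} + E_q$ with $E_p, E_q$ effective and exceptional (the latter because $X'$ is canonical), one gets $p_*\omega_W^m = \omega_X^m$ and $q_*\omega_W^m = \omega_{X'}^{[m]}$, whence
$$a_*\omega_X^m \cong a'_*\omega_{X'}^{[m]},$$
and likewise after twisting by a torsion line bundle. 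It therefore suffices to prove both assertions on $X'$.

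\emph{The GV property.} On $X'$ the divisor $(m-1)K_{X'}$ is semi-ample; choosing $N$ sufficiently divisible and a general member $D \in |N(m-1)K_{X'}|$ and setting $\Delta = \tfrac{1}{N}D$, the pair $(X', \Delta)$ is klt and $K_{X'} + \Delta \sim_{\mathbb{Q}} mK_{X'}$. Thus, up to $\mathbb{Q}$-linear equivalence, $\omega_{X'}^{[m]}$ is the log-canonical sheaf of a klt pair, and I would invoke generic vanishing in the klt setting. Concretely, by Hacon's Fourier--Mukai characterisation of GV-sheaves (\cite{Ha}), $a'_*(\omega_{X'}^{[m]}\otimes P')$ is GV as soon as $R^i\Phi_{\mathcal{P}}\big(R\Delta(a'_*(\omega_{X'}^{[m]}\otimes P'))\big)$ vanishes for $i \neq \dim A$; this vanishing follows from a Koll\'ar-type decomposition of $Ra'_*$ together with Kawamata--Viehweg vanishing applied to $K_{X'}+\Delta$ twisted by the torsion bundle $P'$. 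Alternatively, one passes to the cyclic cover trivialising $P'$ and to a further cover absorbing $\Delta$ into an honest canonical bundle, reduces to the classical generic vanishing theorem for $\omega$, and descends the GV property to the direct summand via Theorem \ref{btool}(iv).

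\emph{Structure of the loci.} For the claim that every component of $V^i(a_*\omega_X^m) = V^i(a'_*\omega_{X'}^{[m]})$ is a translate of a subtorus of $\hat A$ by a torsion point, I would appeal to the Green--Lazarsfeld--Simpson structure theorem for cohomology support loci (cf. \cite{GL1}, \cite{GL2}). In the pluricanonical setting this is reached by passing to a Kawamata-type cover $\pi: \tilde X \to X'$ on which $mK_{X'}$ is dominated by a genuine canonical bundle, applying on $\tilde X$ the classical statement that the components of $V^i(\tilde a_* \omega_{\tilde X})$ are subtori translated by torsion points, and descending the conclusion along the induced correspondence on $\hat A$; Simpson's theorem is what guarantees the torsion of the translating points.

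\emph{Main difficulty.} I expect the crux to be the generic-vanishing step for $m>1$: turning the semi-ampleness of $K_{X'}$ into the klt-pair generic vanishing, and checking that the reduction from $X$ to $X'$ — the descent of $a$ together with the identification $a_*\omega_X^m \cong a'_*\omega_{X'}^{[m]}$ — remains clean once the torsion twist $P$ is carried along. The structural statement, while itself deep, is then a citable consequence of Green--Lazarsfeld--Simpson once the loci are realised geometrically on a cover.
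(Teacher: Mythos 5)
Your overall strategy is essentially the paper's: reduce to the good minimal model, identify $a_*\omega_X^m$ with the pushforward from that model, kill the torsion twist by a cyclic \'etale cover plus the direct-summand stability of GV-sheaves (Theorem \ref{btool}(iv)), and extract the GV property from semi-ampleness of the canonical divisor via Hacon's Fourier--Mukai criterion. The differences are in execution. First, a small point on descent: the paper's definition of a good minimal model only requires a birational model with canonical $\mathbb{Q}$-factorial singularities and $K$ semi-ample, not an MMP output of $X$, so your extremal-ray argument does not literally apply; the paper instead uses that canonical singularities are rational, so any map from a resolution to an abelian variety factors through the model. Second, the paper never does vanishing theory on the singular model: after further blow-ups it establishes the sandwich $\mu^*\omega_{\bar{X}}^m \subseteq \omega_X^m\otimes\mathcal{I}(\|(m-1)K_X\|) \subseteq \omega_X^m$, pushes forward, uses $a_*\omega_X^m = \bar{a}_*\omega_{\bar{X}}^m$ to force equalities, and then quotes \cite{Lai} Lemma 3.4 for the GV property of $a_*(\omega_X^m \otimes \mathcal{I}(\|(m-1)K_X + \tfrac{1}{t}a^*H\|))$. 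Your primary route runs klt-pair generic vanishing directly on $X'$, where $mK_{X'}$ is in general only a $\mathbb{Q}$-Cartier Weil divisor (not Cartier), so the Koll\'ar decomposition and vanishing you invoke must be in their Weil-divisor/klt forms --- workable but heavier than the paper's resolution-level multiplier-ideal argument; your alternative covering route is essentially the proof of Lai's lemma and is fine.

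The one step that, as written, does not work is the structure statement for the loci $V^i$. The paper disposes of it by citing \cite{Lai} Theorem 3.5. You propose instead to realize $a'_*\omega_{X'}^{[m]}$ as a direct summand of $\tilde{a}_*\omega_{\tilde{X}}$ for a Kawamata-type cover $\tilde{X}$, apply Green--Lazarsfeld--Simpson on $\tilde{X}$, and ``descend the conclusion along the induced correspondence on $\hat{A}$.'' But a direct summand only yields the containment $V^i(a'_*\omega_{X'}^{[m]}) \subseteq V^i(\tilde{a}_*\omega_{\tilde{X}})$, and a closed subset of a finite union of torsion translates of subtori need not itself be such a union; the component structure does not descend for free. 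The actual arguments (Chen--Hacon \cite{CH3}, Lai) require the eigensheaf/equivariant refinement: one identifies $H^i(\omega_{X'}^{[m]}\otimes\alpha)$ with an isotypic piece of $H^i(\omega_{\tilde{X}}\otimes\tilde{a}^*\alpha)$ under the Galois action and runs the Green--Lazarsfeld--Simpson machinery compatibly with that action so that each isotypic jump locus inherits the torsion-subtorus structure. Since the needed statement is citable (and the paper cites it), this is not fatal to your proposal, but as a proof sketch that step is genuinely incomplete.
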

\proof Note that the argument of \cite{Lai} applies for a general map to an abelian variety not necessarily the Albanese map. By \cite{Lai} Theorem 3.5, it is known that every component of $V^i(a_*\omega_X^m),i\geq 0$ is a translate of a sub-torus of $\hat{A}$ via a torsion point. We still need to prove that $a_*(\omega_X^m \otimes P)$ is a GV-sheaf on $A$.

If $P \neq \mathcal{O}_X$, then consider the cyclic \'{e}tale cover induced by $nP \equiv \mathcal{O}_X$ where $n$ is the order of $P$. Note that $X'$ also has a good minimal model, and the sheaf $\omega_X^m \otimes P$ is a direct summand of $\pi_*\omega_{X'}^m$, so is $a_*(\omega_X^m \otimes P)$ of $a_*\pi_*\omega_{X'}^m$. To prove that $a_*(\omega_X^m \otimes P)$ is a GV-sheaf, by considering the map $a\circ \pi: X' \rightarrow A$ instead, we can assume $P = \mathcal{O}_X$.

Up to some blowing up maps, we can assume there is a morphism $\mu: X \rightarrow \bar{X}$ to one of its good minimal models. Since $\bar{X}$ has at most canonical singularities which hence are rational, the map $a$ factors through $\mu$.
Write that $a = \bar{a} \circ \mu$. Since $\mu_*\omega_X^m = \omega_{\bar{X}}^m$, we have
$$\clubsuit: a_*\omega_X^m = \bar{a}_*\omega_{\bar{X}}^m.$$

Again up to some blowing up maps, we can assume
\begin{itemize}
\item
$K_{X} = \mu^*K_{\bar{X}} + \sum_i a_iE_i$ where $E_i$ are the $\mu$-exceptional components and $a_i \geq 0$;
\item
there exist a sufficiently divisible integer $N>m$ and a smooth divisor $D \in |\mu^*NK_{\bar{X}}|$ by Bertini's theorem, such that $D + \sum_iE_i$ is a simple normal crossing divisor.
\end{itemize}
Then we have
$$\mu^*mK_{\bar{X}} = mK_X - \sum_ima_iE_i \leq  mK_X - \sum_i[(m - 1)a_i]E_i = mK_X - \sum_i[(m - 1)a_i]E_i - [\frac{m-1}{N}]D.$$
Note that $\mathcal{O}_{X}(- [\frac{m-1}{N}D + \sum_i(m-1)a_iE_i])$ is contained in the multiplier ideal sheaf $\mathcal{I}(||(m-1)K_X||)$.
So for an ample divisor $H$ on $A$ and $t>0$, we have the natural inclusions
$$\mu^*\omega_{\bar{X}}^m \subseteq \omega_{X}^m\otimes\mathcal{I}(||(m-1)K_X||) \subseteq \omega_{X}^m\otimes\mathcal{I}(||(m-1)K_X + \frac{1}{t}a^*H||) \subseteq  \omega_{X}^m$$
Pushing forward via $a_*$, we obtain that
$$\bar{a}_*\omega_{\bar{X}}^m \subseteq a_*(\omega_{X}^m\otimes\mathcal{I}(||(m-1)K_X + \frac{1}{t}\tilde{a}^*H||)) \subseteq  a_*\omega_{X}^m $$
Therefore, the two inclusions are equalities by $\clubsuit$. Then  $a_*\omega_{X}^m$ is a GV-sheaf by \cite{Lai} Lemma 3.4.
\qed

\subsection{Results on fibrations}\label{fbr}
Let $f: X \rightarrow Y$ be a fibration. The variation $\mathrm{Var}(f)$, roughly speaking, is the number of moduli of fibers of $f$ in the sense of birational geometry. Please refer to \cite{Ka} Sec. 1 for a precise definition. If $\mathrm{Var}(f)=0$, we say $f$ \emph{birationally isotrivial}; and if general fibers are isomorphic to each other, we say $f$ \emph{isotrivial}.

Here we recall the following results, which will be used in the sequel.

(1)
Let $X$ and $X'$ be two birational projective varieties with at most canonical singularities, and let $f: X \rightarrow Y$ and $f': X' \rightarrow Y$ be two birational fibrations. Then by considering a common resolution and comparing the push-forward of the pluri-canonical sheaves, we conclude that $f_*\omega_X^m \cong f'_*\omega_{X'}^m$.

(2)
Let $f: X \rightarrow Y$ be a birationally isotrivial fibration. Assume a general fiber $F$ is of Kodaira dimension $\kappa(F) \geq 0$ and has a good minimal model $\bar{F}$. Then there exist a birational model $\bar{f}: \bar{X} \rightarrow Y$ with good general fibers, and an \emph{equivariant resolution} $\nu: X' \rightarrow \bar{X}$ (i.e., there exists a lifting $\nu^*T_{\bar{X}} \rightarrow T_{X'}$ where $T_{\bar{X}}:= \mathcal{E}xt^0(\Omega_{\bar{X}}^1, \mathcal{O}_{\bar{X}})$, see \cite{Ka} p.14). By \cite{Ka} Lemma 7.1 and Corollary 7.3, both $\bar{f}$ and $f'$ are isotrivial. There exists a Galois cover $Z \rightarrow Y$ with Galois group $G$ and an action of $G$ on $\bar{F}$, such that $X$ is birational to $(\bar{F}\times Z)/G$ where $G$ acts on $\bar{F}\times Z$ diagonally. Denote by $H \vartriangleleft G$ the subgroup acting trivially on $\bar{F}$. Then $(\bar{F}\times Z)/H \cong \bar{F} \times (Z/H)$. By replacing the cover $Z \rightarrow Y$ by $Z/H \rightarrow Y$, we can assume $G$ acts on $\bar{F}$ faithfully.

\subsection{Results on Iitaka fibration}\label{itk}
Let $X$ be a smooth variety with $\kappa(X) \geq 0$, and denote by $I: X \rightarrow S$ the Iitaka fibration. Let $g: X \rightarrow Z$ be a fibration and $G$ a general fiber. Then
\begin{itemize}
\item[(1)]{every fiber of the Iitaka fibration of $G$ is contained in some fiber of $I: X \rightarrow S$;}
\item[(2)]{if $g$ factors through $I$, then $\kappa(X) = \kappa(G) + \dim Z$;}
\item[(3)]{if $\kappa(X) = 0$, then its Albanese map is a fibration onto its Albanese variety (\cite{CH1} Theorem 1);}
\item[(4)]{if $X$ has a good minimal model, then every Iitaka fiber has non-positive Kodaira dimension.}
\end{itemize}
Here we make a simple explanation for $(1,2,4)$. Up to some blowing up maps we get a fibration $h:X \rightarrow W$, such that $g$ factors through $h$ and that the restriction map $h|_G$ coincides with its Iitaka fibration of $G$ for general fiber $G$. Let $F'$ be a general fiber of $h$. Assume $K_X \sim_{\mathbb{Q}}I^*H + V$ where $H$ is an ample $\mathbb{Q}$-divisor on $S$. By $K_{F'} \sim K_X|_{F'}$, we have $|m(I^*H + V)||_{F'} \subseteq |mK_{F'}|$ for any $m > 0$. Since $\kappa(F') = 0$, $I^*H|_{F'}\sim_{\mathbb{Q}} 0$. This implies $F'$ is contained in some fiber of $I: X \rightarrow S$, hence $(1)$ follows. If $g$ factors through $I$, then $(1)$ implies that the Iitaka fibration of $G$ coincides with $I|_G$, thus $\kappa(G) = \dim I(G)$, so $(2)$ follows.
For $(4)$, take a good minimal model which is also denoted by $X$ to save notation. Assume that $K_X \sim_{\mathbb{Q}} I^*H$ where $H$ is an ample $\mathbb{Q}$-divisor on $S$. Then for every fiber $F$, $K_{F} \sim_{\mathbb{Q}} K_X|_{F} \sim_{\mathbb{Q}} I^*H|_{F}$ is numerically trivial, thus $F$ has non-positive Kodaira dimension.

\section{The main theorem}\label{4}

In this section we will prove Theorem \ref{main} and \ref{pcm}. Let the notation be as in Theorem \ref{main}

\subsection{The fibration induced by the cohomological support locus of $\omega_X^m$}
Here we denote by $\hat{T}$ the subgroup of $\mathrm{Pic}^0(X)$
generated by the translates through the origin of the components of $V^0(\omega_X^m, \mathrm{alb}_X)$, and denote by $\iota:\mathrm{Alb}(X) \rightarrow T$ the dual map of the inclusion $\hat{T} \hookrightarrow \mathrm{Pic}^0(X)$. So we can assume $V^0(\omega_X^m, \mathrm{alb}_X) \subset \tau + \hat{T}$ where $\tau$ consists of finite torsion points on $\mathrm{Pic}^0(X)$ by Proposition \ref{gv}. Let $\mathrm{alb}_Z \circ g: X\rightarrow Z \rightarrow T$ be the Stein factorization of $\iota\circ \mathrm{alb}_X$. Denote by $G$ a general fiber of $g$, by $K$ a fiber of $\iota$, and by $a_G$ the restriction map of $\mathrm{alb}_X$ on $G$. Then we get a commutative diagram
\[\begin{CD}
G     @> >>      X       @>g>>          Z \\
@Va_G VV          @V\mathrm{alb}_X VV                 @V\mathrm{alb}_Z VV \\
K     @> >>      \mathrm{Alb}(X)      @>\iota>>       T
\end{CD} \]

\begin{Claim}
$\dim(V^0(a_{G*}\omega_G^m)) = 0$.
\end{Claim}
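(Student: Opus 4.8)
The plan is to show that $V^0(a_{G*}\omega_G^m)$, a subset of $\hat{K} := \mathrm{Pic}^0(K)$, contains no positive-dimensional component. Write $p:\mathrm{Pic}^0(X) \to \hat{K}$ for the restriction homomorphism dual to the inclusion $K \hookrightarrow \mathrm{Alb}(X)$, so that $\hat{T} = \ker p$ by the definition of $\iota$, and consequently $p(V^0(\omega_X^m, \mathrm{alb}_X)) \subseteq p(\tau + \hat{T}) = p(\tau)$ is a \emph{finite} set. Since the components of $V^0(a_{G*}\omega_G^m)$ are torsion translates of subtori, a positive-dimensional component would contain infinitely many (distinct) torsion points of $\hat{K}$; the whole argument is arranged to force each such torsion point into the finite set $p(\tau)$, a contradiction.

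First I would check that Proposition \ref{gv} applies to $a_G : G \to K$. Because $f$ factors as $X \xrightarrow{f} Y \xrightarrow{h} Z$ with $g = h\circ f$, a general fibre $G$ of $g$ carries the fibration $f|_G$ whose general fibre is a general fibre $F$ of $f$; as $a_G$ factors through $f|_G$ followed by a finite map, the general fibres of $a_G$ are isomorphic to $F$, which has a good minimal model by AS(1). Hence $G$ has a good minimal model by Proposition \ref{gm}, so $a_{G*}\omega_G^m$ is a GV-sheaf on $K$ whose $V^i$ have components that are torsion translates of subtori.

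The key step is to prove that every torsion point $\beta$ of $V^0(a_{G*}\omega_G^m)$ lies in $p(V^0(\omega_X^m, \mathrm{alb}_X))$. Given such $\beta$, lift it to a torsion class $\alpha_0 \in \mathrm{Pic}^0(X)$ with $p(\alpha_0) = \beta$ (possible since $p$ is a surjection of abelian varieties, hence surjective on torsion). For a general fibre $G_z = g^{-1}(z)$ one has $\omega_{G_z}^m = \omega_X^m|_{G_z}$ and $(\mathrm{alb}_X^*\alpha_0)|_{G_z} = a_{G_z}^*\beta$, so $h^0(G_z,(\omega_X^m \otimes \mathrm{alb}_X^*\alpha_0)|_{G_z}) = h^0(K, a_{G_z*}\omega_{G_z}^m \otimes \beta) > 0$ for general $z$ (the support locus of the general fibre is constant in the family). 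By cohomology and base change over a flat locus this gives $g_*(\omega_X^m \otimes \mathrm{alb}_X^*\alpha_0) \neq 0$, hence $\mathcal{Q} := (\iota \circ \mathrm{alb}_X)_*(\omega_X^m \otimes \mathrm{alb}_X^*\alpha_0) \neq 0$ on $T$ since $\mathrm{alb}_Z$ is finite. Now I would apply Proposition \ref{gv} to the morphism $\iota\circ\mathrm{alb}_X : X \to T$ and the torsion line bundle $\mathrm{alb}_X^*\alpha_0$: the sheaf $\mathcal{Q}$ is a GV-sheaf on $T$. Being nonzero, Theorem \ref{btool}(i) yields $\gamma \in \hat{T} = \mathrm{Pic}^0(T)$ with $h^0(T, \mathcal{Q}\otimes\gamma) > 0$, i.e. by the projection formula $H^0(X, \omega_X^m \otimes \mathrm{alb}_X^*(\alpha_0 \otimes \iota^*\gamma)) > 0$. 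As $\iota^*\gamma \in \hat{T} = \ker p$, the class $\alpha := \alpha_0 \otimes \iota^*\gamma$ lies in $V^0(\omega_X^m,\mathrm{alb}_X)$ with $p(\alpha) = \beta$, as wanted.

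The main obstacle is precisely this last (key) step: descending from a nonzero section on the general fibre back to a section on all of $X$ after a twist by a class pulled back from $\hat{T}$. This is where AS(1) enters twice through Proposition \ref{gv} — once to guarantee that $a_{G*}\omega_G^m$ is a GV-sheaf with torsion components, and once to make $(\iota\circ\mathrm{alb}_X)_*(\omega_X^m \otimes \mathrm{alb}_X^*\alpha_0)$ a GV-sheaf, so that its nonvanishing forces $V^0 \neq \emptyset$. Combining the key step with the finiteness of $p(\tau)$ rules out any positive-dimensional component of $V^0(a_{G*}\omega_G^m)$, giving $\dim V^0(a_{G*}\omega_G^m) = 0$.
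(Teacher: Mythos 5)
Your proof is correct and is essentially the paper's own argument run in the contrapositive direction, with the same ingredients: the quotient $p:\mathrm{Pic}^0(X)\to\hat{K}$ killing $\hat{T}$, the finiteness of $p(\tau)\supseteq p(V^0(\omega_X^m,\mathrm{alb}_X))$, Proposition~\ref{gv} applied both to $X\to T$ (with a torsion twist) and to $a_G:G\to K$, Theorem~\ref{btool}(i), and the density of torsion points in a positive-dimensional torsion translate of a subtorus. Where the paper fixes a torsion $\alpha_0\in\hat{K}\setminus\pi(\tau)$, lifts it to a torsion $\alpha\in\mathrm{Pic}^0(X)$, shows $V^0((\mathrm{alb}_Z\circ g)_*(\omega_X^m\otimes\alpha))=\hat{T}\cap(V^0(\omega_X^m,\mathrm{alb}_X)-\alpha)=\emptyset$, and concludes that this GV-sheaf is zero, hence $h^0(G,\omega_G^m\otimes a_G^*\alpha_0)=0$, you start from a torsion $\beta\in V^0(a_{G*}\omega_G^m)$ and lift it into $V^0(\omega_X^m,\mathrm{alb}_X)$ via the nonvanishing form of the same statement (a nonzero GV-sheaf has $V^0\neq\emptyset$); both versions end with the identical finiteness-versus-density contradiction. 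The one substantive cost of reversing the direction is your step ``$h^0>0$ on $G$ implies $g_*(\omega_X^m\otimes\mathrm{alb}_X^*\alpha_0)\neq 0$'': this needs cohomology and base change to hold at the point $z_0$ with $G=G_{z_0}$ for the particular twist $\alpha_0$, and since $\beta$ is found only after $G$ has been fixed, you cannot simply re-choose $G$ generically for it. Your parenthetical claim that the support locus of the general fibre is constant in the family is exactly what is needed, and it is true for a \emph{very general} fibre: by Proposition~\ref{gv} each $V^0(a_{G_z*}\omega_{G_z}^m)$ is a finite union of torsion translates of subtori of $\hat{K}$, there are only countably many such translates $W$, and for each of them the set $\{z: W\subseteq V^0(a_{G_z*}\omega_{G_z}^m)\}$ is closed by semicontinuity, so outside countably many proper closed subsets of $Z$ the locus is constant. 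You should make this countability argument explicit; note, in fairness, that the paper's descent direction (zero pushforward implies vanishing on general fibres) hides the mirror-image subtlety, since its fixed general $G$ must avoid the base-change jumping loci of countably many torsion twists, so both proofs really concern a very general fibre, which is all that is needed for Proposition~\ref{G} and Theorem~\ref{main}.
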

\begin{proof}
Consider the quotient group homomorphism $\pi: \mathrm{Pic}^0(X) \rightarrow\hat{K}$ whose kernel contains some translates of $\hat{T}$ via some torsion points. Observe that
\begin{itemize}
\item[(a)]{by definition, for $\alpha \in \mathrm{Pic}^0(X)$, $V^0(\omega_X^m\otimes \alpha, \mathrm{alb}_X) + \alpha = V^0(\omega_X^m, \mathrm{alb}_X)$, thus $ V^0((\mathrm{alb}_Z \circ g)_*(\omega_X^m \otimes \alpha))= \hat{T}\cap V^0(\omega_X^m\otimes \alpha, \mathrm{alb}_X) \cong (\hat{T} + \alpha)\cap V^0(\omega_X^m\otimes \alpha, \mathrm{alb}_X)$;}
\item[(b)]{$V^0(\omega_X^m\otimes \alpha, \mathrm{alb}_X)$ is projected to finite torsion points $\pi(\tau)$ on $\hat{K}$ via $\pi: \mathrm{Pic}^0(X) \rightarrow \hat{K}$.}
\end{itemize}
For any torsion point $\alpha_0 \in \hat{K}$ not contained in $\pi(\tau)$, taking a torsion point $\alpha \in \pi^{-1}\alpha_0$, by (a,b) above we have $ V^0((\mathrm{alb}_Z \circ g)_*(\omega_X^m \otimes \alpha)) = \emptyset$. Since $(\mathrm{alb}_Z \circ g)_*(\omega_X^m \otimes \alpha)$ is a GV-sheaf by Proposition \ref{gv}, we conclude that $(\mathrm{alb}_Z \circ g)_*(\omega_X^m \otimes \alpha) = 0$ by Theorem \ref{btool} (i), thus $V^0(a_{G*}\omega_G^m \otimes \alpha_0) = 0$. Applying Proposition \ref{gv} to $G$, we conclude that $V^0(a_{G*}\omega_G^m) \subset \pi(\tau)$.
\end{proof}

The following proposition describes $G$, and the proof is postponed.
\begin{Proposition}\label{G}
There exists an \'{e}tale cover $\tilde{K} \rightarrow K$ such that the fiber product $G \times_K \tilde{K}$ is birational to $F \times \tilde{K}$. In particular the Iitaka fiber of $G$ is mapped onto $K$ via $\mathrm{alb}_X$.
\end{Proposition}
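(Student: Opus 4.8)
The plan is to prove that the Albanese-type map $a_G\colon G\to K$ is birationally isotrivial and, in fact, becomes birationally trivial after an \'etale cover of $K$; the displayed fiber product and the statement about Iitaka fibers will then follow. Write the Stein factorization of $a_G$ as $G\xrightarrow{f_G}Y_K\xrightarrow{\nu_K}K$, where $Y_K$ is the general fiber of $Y\to Z$ sitting over the relevant point, $f_G=f|_G$ has general fiber $F$, and $\nu_K$ is finite (so $G=f^{-1}(Y_K)$). Set $\mathcal F:=a_{G*}\omega_G^m$.

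First I would record that $c_1(\mathcal F)$ is numerically trivial on $K$. By the Claim the set $V^0(\mathcal F)$ is finite, and by Proposition \ref{gv} the sheaf $\mathcal F$ is a GV-sheaf; hence by Theorem \ref{btool}(ii) the transform $\widehat{R\Delta(\mathcal F)}$ is a coherent sheaf on $\hat K$, whose support Serre duality identifies with the finite set $-V^0(\mathcal F)$. Inverting the Fourier--Mukai transform via Theorem \ref{Mu} then exhibits $\mathcal F$ as a successive extension of the numerically trivial line bundles indexed by the points of $V^0(\mathcal F)$, so $c_1(\mathcal F)$ is numerically trivial and $c_1(\mathcal F)\cdot C'=0$ for every curve $C'\subset K$.

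The heart of the argument is a degree computation over curves, and I expect this to be the main obstacle. Choose a general complete-intersection curve $C'\subset K$ and put $G_{C'}=a_G^{-1}(C')$ with $a_{G,C'}\colon G_{C'}\to C'$. Because $\omega_K\cong\mathcal O_K$, adjunction for $C'\subset K$ gives $\omega_{C'}\cong\det N_{C'/K}$, so the normal-bundle twist cancels the base twist and $\omega_{G_{C'}/C'}\cong\omega_G|_{G_{C'}}$; base change then yields
\[
\deg\,(a_{G,C'})_*\omega^m_{G_{C'}/C'}=\deg\big(\mathcal F|_{C'}\big)=c_1(\mathcal F)\cdot C'=0.
\]
On the other hand $G_{C'}=X_{\tilde C}$ for $\tilde C:=\nu_K^{-1}(C')\subset Y$, and $a_{G,C'}$ factors as $X_{\tilde C}\xrightarrow{f_{\tilde C}}\tilde C\xrightarrow{\nu_K}C'$; comparing relative dualizing sheaves across the finite map $\nu_K|_{\tilde C}$ gives
\[
0=\det(f_*\omega^m_{X/Y})\cdot\tilde C+rm\,\deg R,
\]
where $r=h^0(F,\omega_F^m)$ and $R$ is the ramification divisor of $\nu_K|_{\tilde C}$. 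The first summand is $\geq 0$ by weak positivity of $f_*\omega^m_{X/Y}$ (\cite{Vie2}) and the second is $\geq 0$ by Riemann--Hurwitz, so both vanish: $\det(f_*\omega^m_{X/Y})\cdot\tilde C=0$ and $\nu_K$ is unramified over $C'$. As $C'$ varies these $\tilde C$ sweep out $Y_K$, whence $\nu_K$ is \'etale and $\det(f_*\omega^m_{X/Y})|_{Y_K}$ is numerically trivial. The delicate point is precisely engineering the twists to cancel through $\omega_K\cong\mathcal O_K$, so that numerical triviality of $c_1(\mathcal F)$ on $K$ is converted into vanishing of $\det(f_*\omega^m_{X/Y})\cdot\tilde C$, and then using weak positivity to split off the ramification term.

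Finally I would invoke AS(2) and the structure theory of isotrivial fibrations. For general $C'$ the curve $\tilde C$ meets the hypotheses of AS(2) (arranging $X_{\tilde C}$ smooth and $f_{\tilde C}$ flat by generality) while $\det(f_*\omega^m_{X/Y})\cdot\tilde C=0$, so AS(2) forces $f_{\tilde C}$ to be isotrivial; since the $\tilde C$ form a covering family this gives $\mathrm{Var}(f_G)=0$, i.e. $f_G$ is birationally isotrivial. The structure result recalled in Section \ref{fbr}(2) then trivializes $f_G$ birationally after a Galois cover of $Y_K$; composing with the \'etale map $\nu_K$ and descending the (finite) monodromy to the abelian variety $K$ --- using that finite-index subgroups of $\pi_1(K)$ correspond to isogenies --- produces an \'etale cover $\tilde K\to K$ with $G\times_K\tilde K$ birational to $F\times\tilde K$; this descent to an honest isogeny cover of $K$ is the other point requiring care. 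The last assertion then follows because the Iitaka fibration of $F\times\tilde K$ is the product of that of $F$ with $\tilde K$, so its fibers dominate $\tilde K$ and hence the Iitaka fibers of $G$ dominate $K$.
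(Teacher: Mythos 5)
Your Steps 1 and 2 track the paper's proof closely: numerical triviality of $a_{G*}\omega_G^m$ via the Fourier--Mukai argument, then the degree computation on general complete-intersection curves, using weak positivity to force both $\det(f_*\omega_{X/Y}^m)\cdot\tilde C=0$ and unramifiedness of $\nu_K$ over the curve, hence (by purity, or by Kawamata's characterization as in the paper) that the Stein factor is an abelian variety \'{e}tale over $K$. One arithmetic slip: pushing a bundle of rank $rm$-independent rank $r\deg\nu_K$ down the finite map costs $-\tfrac{r}{2}\deg R$ in the Euler-characteristic comparison, so the correct identity is $0=\det(f_*\omega^m_{X/Y})\cdot\tilde C+r\left(m-\tfrac{1}{2}\right)\deg R$, not with coefficient $rm$; since $m\geq 1$ both coefficients are positive, so your conclusion is unaffected.

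The genuine gap is in your final step. From AS(2) over the curves $\tilde C$ you extract only birational isotriviality of $f_G$, and then invoke the structure result of Section \ref{fbr}(2). But that result produces a finite Galois cover $Z'\to Y_K$ which is a priori \emph{ramified}; a ramified cover is not a covering space, does not correspond to a finite-index subgroup of $\pi_1(K)$, and so your ``descent to an isogeny'' has nothing to descend. This cannot be waved away: ramification of the trivializing cover is exactly what separates WAS(2) from AS(2), and the paper's example (Remark (3) of the introduction; see also Section \ref{eg}) of $X=(E\times\tilde{C})/\langle t_a\times\sigma\rangle$ with $\sigma$ having fixed points is a birationally isotrivial fibration whose trivializing cover cannot be taken \'{e}tale --- it has multiple fibers --- and for which the conclusion fails. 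The paper closes this gap by invoking AS(2) a \emph{second} time, with its full strength: over a general complete-intersection curve $C$, AS(2) gives an \'{e}tale cover of $C$ trivializing the fibration, hence a birational model of $\bar{G}_C\to C$ all of whose fibers are isomorphic to $\bar{F}$, in particular with no multiple fibers; whereas if $Z'\to K'$ were branched along a divisor, the fibers of $(\bar{F}\times Z')/H\to K'$ over that divisor would be multiple, and a general $C$ would meet it. This contradiction shows $Z'\to K'$ is unramified in codimension one, so $Z'$ is birational to an abelian variety \'{e}tale over $K$ --- which is precisely the statement your argument needs and does not prove.
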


\subsection{$\hat{T} \subset  I^*\mathrm{Pic}^0(S)$}
Let $I: X \rightarrow S$ be the Iitaka fibration and $G'$ a fiber. We get a commutative diagram
\[\begin{CD}
G'     @> >>      X       @>I>>          S \\
@Va_{G'} VV          @V\mathrm{alb}_X VV                 @V\mathrm{alb}_S VV \\
K'     @> >>      \mathrm{Alb}(X)      @>\iota'>>       T'=\mathrm{Alb}(S)
\end{CD} \]
Each Iitaka fiber $G'$ has Kodaira dimension $\leq 0$, thus $V^0(a_{G'*}\omega_{G'}^m) \subset \hat{K'}$ is composed of at most finite torsion points, so there exists a set of torsion points $\tau' \subset \hat{K'}$ containing $V^0(a_{G'*}\omega_{G'}^m)$ for every $G'$. Consider the quotient map $\pi': \mathrm{Pic}^0(X) \rightarrow\hat{K'}$ whose kernel consists of some translates of $\iota'^*\hat{T'}$ via some torsion points. For any $\alpha_0 \in \hat{K'}$ not contained in $\tau'$, we have $V^0(a_{G'*}\omega_{G'}^m \otimes \alpha_0) = \emptyset$ and thus $h^0(G', \omega_{G'}^m\otimes \alpha_0) = 0$; we conclude that for every $\alpha \in \pi'^{-1}\alpha_0$, $\mathrm{alb}_{S*}I_*(\omega_X^m \otimes \alpha) = 0$, thus
$$\pi'^{-1}\alpha_0 \cap V^0(\omega_X^m, \mathrm{alb}_X) = \emptyset.$$
This means that $V^0(\omega_X^m, \mathrm{alb}_X)$ is projected into $\tau'$ on $\hat{K'}$ via $\pi': \hat{A} \rightarrow \hat{K'}$, thus the kernel contains $\hat{T}$, i.e., $\hat{T} \unlhd \iota'^*\hat{T'}$, consequently the dual map $T' \rightarrow T$ is a surjection.

\subsection{Proof of Theorem \ref{main}}
Consider the Iitaka fibration $I_G: G \rightarrow Z''$ and take a general fiber $F''$. Note that  $F''$ is contained in some $G'$ (see Sec. \ref{itk}). By Proposition \ref{G}, $F''$ is mapped onto $K$ via $\mathrm{alb}_X$, so $K \subseteq K'$ up to a translate. Combining that $T' \rightarrow T$ is a surjection, we conclude that $K' = K$ and $T' = T$, and thus $\hat{T} = \iota'^*\hat{T'} = I^*\mathrm{Pic}^0(S)$.

We still need to calculate $\dim(\hat{T})$.
Observe that Iitaka fibration factors through $g: X \rightarrow Z$. By the results of Sec. \ref{itk}, we have $\kappa(X) = \dim(Z) +  \kappa(G)$. On the other hand, by Proposition \ref{G} $\kappa(G) = \kappa(F)$. Then we are done by
\begin{equation}
\begin{split}
\dim(\hat{T})& = \dim(T) \\
& = q(X) - \dim(K) \\
& = q(X) - (\dim(G) - \dim(F)) \\
& = q(X) - (\dim(X) - \dim(Z) -\dim(F))\\
& = q(X) - (\dim(X) - \kappa(X) + \kappa(G) -\dim(F)) \\
& = q(X) -(\dim(X) - \kappa(X)) - (\dim(F) - \kappa(F))
\end{split}
\end{equation}

\subsection{Proof of Proposition \ref{G}} We break the proof into three steps.

Step 1: $a_{G*}\omega_G^m$ is a numerically trivial vector bundle on $K$.

Obviously $a_G = \pi \circ f|_G: G \rightarrow W \rightarrow K$ (where $W = f(G)$) coincides with the Stein factorization of $a_G$,  thus the assumptions AS(1, 2) hold for $f|_G$ and $m$.

Note that $a_{G*}\omega_G^m$ is a GV-sheaf by Proposition \ref{gv} and $\dim V^0(a_{G*}\omega_G^m) = 0$. Then using Theorem \ref{btool} we have
$$R\Phi_\mathcal{P}(R \triangle a_{G*}\omega_G^m) \cong R^d\Phi_\mathcal{P}(R \triangle a_{G*}\omega_G^m) [-d] = \tau[-d]$$
where $\tau$ is a coherent sheaf on $\hat{K}$ and $d = \dim K$; and
$$R^i\Phi_\mathcal{P} a_{G*}\omega_G^m\cong (-1)_{\hat{K}}^*\mathcal{E}xt^i(R^d\Phi_\mathcal{P}(R \triangle a_{G*}\omega_G^m), \mathcal{O}_{\hat{K}}) \cong (-1)_{\hat{K}}^*\mathcal{E}xt^i(\tau, \mathcal{O}_{\hat{K}})$$
is supported on at most finite points for every $i \geq 0$. It follows that $\dim(\mathrm{Supp}\tau) = 0$, thus $\mathcal{E}xt^i(\tau, \mathcal{O}_{\hat{K}}) = 0$ except when $i = d$. So we have
$$R\Phi_\mathcal{P}a_{G*}\omega_G^m \cong R^d\Phi_\mathcal{P} a_{G*}\omega_G^m[-d]\mathrm{~and~} \dim(\mathrm{Supp} R^d\Phi_\mathcal{P} a_{G*}\omega_G^m)=0.$$
Therefore, using Theorem \ref{Mu}, we conclude that
$$a_{G*}\omega_G^m \cong (-1)_K^*R\Psi_\mathcal{P} R\Phi_\mathcal{P} a_{G*}\omega_G^m[d] \cong (-1)_K^*R^0\Psi_\mathcal{P} R^d\Phi_\mathcal{P} a_{G*}\omega_G^m$$
is a numerically trivial vector bundle on $K$.

Step 2: $W$ is isomorphic to an abelian variety $K'$ \'{e}tale over $K$.

Take a smooth curve $i: B = H_1 \cap H_2 \cap ...\cap H_{d-1} \hookrightarrow K$ where $H_i$  are some general very ample divisors on $K$, such that both $C = \pi^{-1}B$ and $G_C = G \times_WC$ are smooth, and the projection $G_C = G \times_WC \rightarrow C$ is flat. We use $a_C$ and $f_C$ for the restriction maps of $a_G$ and $f$ on $G_C$. We have that $a_{C*}\omega_{G_C/B}^m \cong i^*a_{G*}\omega_{G/K}^m$ is numerically trivial, hence
$$\deg(a_{C*}\omega_{G_C/B}^m) = 0.$$

Let $r = h^0(F, \omega_F^m)$. Applying Riemann-Roch formula gives that
\begin{equation}
\begin{split}
&\chi(C, f_{C*}\omega_{G_C/B}^m) = \deg(f_{C*}\omega_{G_C/B}^m) + r(1-g(C))\\
&\chi(B, a_{C*}\omega_{G_C/B}^m) \\
&= \deg(a_{C*}\omega_{G_C/B}^m) + \deg(\pi)r(1-g(B))\\
& = \deg(\pi)r(1-g(B))\\
& = r(1-g(C)) + \frac{r}{2}\deg(\omega_{C/B})
\end{split}
\end{equation}
where the $3^{\mathrm{rd}}$ ``='' is due to $\deg(a_{C*}\omega_{G_C/B}^m ) = 0$, and the $4^{\mathrm{th}}$ ``='' is due to $2g(C) - 2 = \deg(\pi)(2g(B) - 2) + \deg(\omega_{C/B})$. Then by $\chi(C, f_{C*}\omega_{X_C/B}^m) = \chi(B, \pi_*f_{C*}\omega_{G_C/B}^m) = \chi(B, a_{C*}\omega_{G_C/B}^m)$, we get
$$\heartsuit:\deg(f_{C*}\omega_{X_C/B}^m)= \frac{r}{2}\deg(\omega_{C/B}).$$
On the other hand, we have
$$\deg(f_{C*}\omega_{G_C/B}^m) =\deg(f_{C*}\omega_{G_C/C}^m  \otimes \omega_{C/B}^m)= \deg(f_{C*}\omega_{G_C/C}^m)+ rm\deg(\omega_{C/B}).$$
Since $f_{C*}\omega_{G_C/C}^m$ is nef, thus $\deg(f_{C*}\omega_{G_C/B}^m) \geq rm\deg(\omega_{C/B})$, Then by $\heartsuit$ it is only possible that
$$\deg(\omega_{C/B}) = 0~\mathrm{and}~\deg(f_{C*}\omega_{X_C/C}^m) = 0,$$
thus $C \rightarrow B$ is \'{e}tale. If $D \subset K$ denotes the branch divisor of $\pi$, then $B \cdot D = 0$, thus $D = 0$ since $B$ is a complete intersection of ample divisors. We conclude that $\pi: W \rightarrow K$ is \'{e}tale in codimension 1, thus $\kappa(W) = 0$, which means $W$ is birational to an abelian variety $K'$ which is \'{e}tale over $K$ (see for example \cite{Ka0} Cor. 2). Since $\pi: W \rightarrow K$ is finite, it must be that $W =K'$.

Step 3: There is an \'{e}tale cover $\tilde{K} \rightarrow K'$ such that $G\times_{K'}\tilde{K}$ is birational to $F \times \tilde{K}$.

Take a good minimal model $\bar{G} \rightarrow K'$, and let $\bar{F}$ be a general fiber. By the assumption AS(2) and the results of Sec. \ref{fbr}, we have that $\bar{G}_C$ is isotrivial over $C$, thus $\bar{G}$ is isotrivial over $K'$. We can find a Galois cover $Z' \rightarrow K'$ with Galois group $H$ and a faithful action of $H$ on $\bar{F}$, such that $\bar{G}$ is birational to $(\bar{F} \times Z')/H$ where $H$ acts on $\bar{F} \times Z'$ diagonally. For the fibration $(\bar{F} \times Z')/H \rightarrow K'$, the fibers over the branch locus of $Z' \rightarrow K'$ are multiple fibers. By the assumption AS(2) again, $\bar{G}_C \rightarrow C$ has a birational model $\bar{G}'_C \rightarrow C$ such that every fiber is isomorphic to $\bar{F}$. We conclude that $C$ does not intersect the branch locus of $Z' \rightarrow K'$, thus $Z' \rightarrow K'$ is unramified in codimension 1, hence $Z'$ is birational to an abelian variety $\tilde{K}$. Then we finish the proof.

\subsection{Proof of  Theorem \ref{pcm}}\label{pm}
Applying the argument above to the map $a: X \rightarrow \mathrm{Alb}(S)$, we can prove that the translates through the origin of the components of $V^0(\omega_X, a)$ generates $\mathrm{Pic}^0(S)$.
Since $X$ has a good minimal model, using the proof of Proposition \ref{gv}, we have $a_*(\omega_{X}^k\otimes\mathcal{I}(||(k-1)K_X||)) = a_*(\omega_{X}^k) \neq 0$ for $k>0$.
Then
\begin{itemize}
\item[(I)]{For $k \geq 2$, we have $h^i(\mathrm{Alb}(S), a_*(\omega_{X}^{k} \otimes\mathcal{I}(||(k-1)K_X||))\otimes Q) = 0$ for any $Q \in \mathrm{Pic}^0(S)$ and $i>0$ by \cite{J} Lemma 4.2, the sheaf $a_*(\omega_{X}^{k})$ is an $IT^0$ sheaf, hence is CGG by Proposition \ref{cgg};}
\item[(II)]{Writing that $I_x \otimes \omega_X^k=I_x \otimes \omega_X \otimes \omega_X^{k-1}$, then arguing similarly as in \cite{Ti} or \cite{JS} Proposition 5.2, we can show that for general $x\in X$ the sheaf $a_*(I_x \otimes \omega_X^k)$ is $M$-regular for any $k\geq 2$, hence is CGG.}
\end{itemize}
Let $\Gamma$ be the support of the cokernal of $a^*a_*(\omega_{X}) \rightarrow \omega_{X}$. Take two distinct points $x, y \in X$ not contained in $\Gamma$ but separated by the Iitaka fibration. Suppose that $x$ is general. We will prove that the linear system $|(n+2)K_X \otimes I^*\alpha|$ separates $x$ and $y$. It suffices to find a section $s \in H^0(X, I_x \otimes \omega_X^{n+2}\otimes I^*\alpha) \neq 0$ not vanishing at $y$.

We will apply the proof of \cite{JS} Theorem 5.3.
By the assumption (2), the following evaluation map is surjective
$$a^*a_*(I_x\otimes \omega_X^n)\rightarrow a_*(I_x\otimes \omega_X^n) \rightarrow a_*(I_x\otimes \omega_X^n)|_y \cong \mathbb{C}(y).$$
Then by (II), for general $\beta \in \mathrm{Pic}^0(S)$ there exists $t_{-\beta} \in H^0(X, I_x\otimes\omega_X^n \otimes I^*\beta^{-1})$ not vanishing at $y$. Similarly, by (I), for general $\beta \in \mathrm{Pic}^0(S)$ there exists $s_{\beta} \in H^0(X, \omega_X^2\otimes I^*\alpha \otimes I^*\beta)$ not vanishing at $y$. Then the section $t_{-\beta}\otimes s_{\beta} \in H^0(X, I_x \otimes \omega_X^{n+2}\otimes I^*\alpha)$ does not vanish at $y$. We finish the proof.

\section{Isotrivial fibration with 0-degreed push-forward of relative pluri-canonical sheaves}

In this section we will study the isotrivial fibration $f: X \rightarrow B$ to a curve with $\deg(f_*\omega_{X/B}^m) = 0$, and get a relation between the two assumptions AS(2) and WAS(2).
Throughout this section, for a projective morphism $h:Y\rightarrow X$ between two varieties, we use $\omega_{Y/X}$ for the relative dualizing sheaf. For duality theory and some basic properties of relative dualizing sheaves, we refer to \cite{Har} Chap. III Sec. 8, 10, Chap. V Sec. 9 and Chap. VII Sec. 4 or \cite{Vie1} Sec. 6. In particular the relative dualizing sheaf $\omega_{Y/X}$ commutes with flat base changes and coincides with the relative canonical sheaf if $X$ and $Y$ are smooth varieties. First recall the following results
\begin{Lemma}[\cite{Vie2} Lemma 2.1]\label{vr}
Let $X$ be a smooth quasi-projective variety, $Y$ a normal variety and $h: Y \rightarrow X$ a finite morphism. Assume the discriminant $\Delta(Y/X) \subset X$ is a normal crossing divisor. Take a desingularization $d: Z \rightarrow Y$. Then

(1) $h$ is flat and $Y$ has only rational singularities;

(2) for an effective $d$-exceptional divisor $E$, the dualizing sheaf $\omega_Y \cong d_*\omega_Z(E)$;

(3) $\omega_X$ is a direct factor of $h_*d_*\omega_Z$.

\end{Lemma}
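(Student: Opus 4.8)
The plan is to reduce everything to a description of the local structure of the cover $h$ and then feed that structure into standard vanishing and duality machinery. The heart of the matter is part (1), for which I would invoke Abhyankar's lemma: since $X$ is smooth and the branch divisor $\Delta(Y/X)$ is simple normal crossing, around any point $x \in X$ I can choose coordinates in which $\Delta = \{x_1 \cdots x_k = 0\}$, and after the Kummer base change $X' \to X$ given by extracting roots $x_i = t_i^{N}$ of the local equations of the branch components (with $N$ divisible by all relevant ramification indices), the normalization of $Y \times_X X'$ becomes \'{e}tale over the smooth variety $X'$. Consequently, \'{e}tale-locally $Y$ is the quotient of a smooth variety by the finite abelian Galois group of the Kummer cover, i.e. $Y$ has only abelian quotient singularities. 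In characteristic zero quotient singularities are Cohen--Macaulay and rational; since $h$ is finite with $X$ regular and $Y$ Cohen--Macaulay of the same dimension, miracle flatness yields that $h$ is flat, and rationality of the singularities is exactly the remaining assertion of (1).

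For part (2), I would use that $Y$ now has rational singularities. By Grauert--Riemenschneider vanishing $R^i d_* \omega_Z = 0$ for $i>0$, and for a normal variety with rational singularities the trace map identifies $d_* \omega_Z \cong \omega_Y$ (Kempf's criterion). It then remains to absorb the twist by the effective $d$-exceptional divisor $E$. Here I would note that $\omega_Y$ is reflexive because $Y$ is normal, so a global or local section of $\omega_Z(E)$ --- being a rational section of $\omega_Y$ that is regular away from the exceptional locus, whose complement in $Y$ has codimension at least two --- extends uniquely across that locus. This gives $d_* \omega_Z(E) = d_* \omega_Z = \omega_Y$, which is (2).

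For part (3), I would exploit duality for the finite morphism $h$. Since $d_* \omega_Z = \omega_Y$ by (2), one has $h_* d_* \omega_Z = h_* \omega_Y$, and Grothendieck duality for the finite map $h$ to the smooth (hence Gorenstein) variety $X$ gives $h_* \omega_Y \cong \mathcal{H}om_{\mathcal{O}_X}(h_* \mathcal{O}_Y, \omega_X)$. Because $h$ is flat by (1), the sheaf $h_*\mathcal{O}_Y$ is locally free of some rank $N$, and in characteristic zero the normalized trace $\tfrac{1}{N}\mathrm{Tr}\colon h_*\mathcal{O}_Y \to \mathcal{O}_X$ splits the unit $\mathcal{O}_X \hookrightarrow h_* \mathcal{O}_Y$, so $\mathcal{O}_X$ is a direct summand of $h_*\mathcal{O}_Y$. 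Applying the functor $\mathcal{H}om_{\mathcal{O}_X}(-, \omega_X)$ preserves this splitting and exhibits $\omega_X = \mathcal{H}om_{\mathcal{O}_X}(\mathcal{O}_X, \omega_X)$ as a direct factor of $h_* \omega_Y = h_* d_* \omega_Z$, which is (3).

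The main obstacle is part (1): showing that the normal-crossing condition on the discriminant forces $Y$ to have quotient, and hence rational and Cohen--Macaulay, singularities. The Abhyankar reduction must be executed with care --- the roots must be taken divisible by the local ramification indices along every component, and one has to verify that the normalization of the base-changed cover is genuinely \'{e}tale and that the descent back to $Y$ realizes it as a finite-group quotient. Once this local picture is secured, parts (2) and (3) are essentially formal, resting only on Grauert--Riemenschneider vanishing, reflexivity of $\omega_Y$, and the trace splitting, and require no further geometric input.
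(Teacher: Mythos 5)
The paper does not prove this lemma at all---it is quoted directly from Viehweg (\cite{Vie2}, Lemma 2.1), so there is no internal proof to compare against; judged on its own, your argument is correct and is essentially Viehweg's original one. Namely: Abhyankar's lemma realizes $Y$ (\'{e}tale-locally over $X$) as the quotient of a variety \'{e}tale over the Kummer cover by the finite abelian Galois group, giving quotient---hence rational and Cohen--Macaulay---singularities and flatness by miracle flatness; part (2) then follows from Grauert--Riemenschneider/Kempf together with reflexivity of $\omega_Y$ and the fact that $d$-exceptional loci have image of codimension at least two; and part (3) follows from duality for the finite flat map $h$, i.e. $h_*\omega_Y \cong \mathcal{H}om_{\mathcal{O}_X}(h_*\mathcal{O}_Y, \omega_X)$, combined with the characteristic-zero trace splitting of $\mathcal{O}_X \rightarrow h_*\mathcal{O}_Y$.
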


Let $\pi_0: C \rightarrow B=C/G$ be a Galois cover between two smooth curves. Denote by $R_{\pi_0}$ the ramification locus. Let $F$ be a smooth variety with a faithful action of $G$, $\bar{Y}= F \times C$, $\bar{X} = (F \times C)/G$ where $G$ acts on $F \times C$ diagonally, $\mu: X \rightarrow \bar{X}$ a smooth resolution, $\bar{\pi}: \bar{Y} \rightarrow \bar{X}$ the quotient map, $g: Y \rightarrow C$ the projection to $C$ and $f: X \rightarrow B$ the natural fibration.

Consider the following commutative diagram
\[\begin{CD}
Y''     @> \sigma >> Y'     @> \eta>> Y=X\times_B C  @> \pi >>  X \\
@Vg'' VV            @Vg' VV                @Vg VV         @Vf VV   \\
C     @> \mathrm{id}_C >>C     @> \mathrm{id}_C >> C  @> \pi_0 >>  B
\end{CD} \]
where $\eta:Y'\rightarrow Y$ is the normalization, and $\sigma: Y'' \rightarrow Y'$ is a smooth resolution. Let $\pi':=\pi \circ \eta: Y' \rightarrow X$ and $\pi''= \pi' \circ \sigma: Y'' \rightarrow Y$. We assume $\pi'$ is branched along a normal crossing divisor on $X$.

By construction, there is a birational morphism $\nu: Y'' \rightarrow \bar{Y}$, and we can assume the action of $G$ lifts on $Y''$. Then $\omega_{Y''}$, $\omega_C$ and $\omega_F$ are all $G$-invariant sheaves, and $G$ acts on their global sections via pulling back $\zeta \mapsto \zeta^*$ where $\zeta \in G$.

\begin{Lemma}\label{gai}
$\bar{\pi}^G_* \omega_{\bar{Y}} \cong \mu_*\omega_X$ where $\bar{\pi}^G_* \omega_{\bar{Y}}$ denote the $G$-invariant subsheaf of $\bar{\pi}_* \omega_{\bar{Y}}$.
\end{Lemma}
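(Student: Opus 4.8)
The plan is to identify both $\mu_*\omega_X$ and $\bar\pi^G_*\omega_{\bar Y}$ with the Grothendieck dualizing sheaf $\omega_{\bar X}$ of the quotient $\bar X = (F\times C)/G$. Since $F$ is smooth and $C$ is a smooth curve, $\bar Y = F\times C$ is smooth, so $\bar X$ is the quotient of a smooth variety by a finite group in characteristic zero; hence $\bar X$ has quotient singularities, which are rational, in particular Cohen--Macaulay and canonical, and $\omega_{\bar X}$ is a well-defined reflexive sheaf.

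First I would treat the resolution side. Because $\bar X$ has rational singularities and $\mu: X \to \bar X$ is a resolution, the direct image of the canonical sheaf recovers the dualizing sheaf, $\mu_*\omega_X \cong \omega_{\bar X}$: Grauert--Riemenschneider gives $R^{>0}\mu_*\omega_X = 0$, and rationality yields the identification, independently of the chosen resolution. This disposes of the right-hand side.

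The core of the argument is the quotient side, for which the key geometric observation is that the diagonal $G$-action on $\bar Y = F\times C$ is free in codimension one. Indeed, for $1\neq \zeta\in G$ the fixed locus is $\mathrm{Fix}_F(\zeta)\times \mathrm{Fix}_C(\zeta)$; faithfulness of the $G$-action on $F$ forces $\mathrm{Fix}_F(\zeta)\subsetneq F$ (codimension $\geq 1$), while $\zeta$ has only finitely many fixed points on the curve $C$ (codimension $1$), so the fixed locus has codimension $\geq 2$ in $\bar Y$. Consequently $\bar\pi$ is \'{e}tale in codimension one, and its branch locus has codimension $\geq 2$ in $\bar X$. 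The trace map of the finite morphism $\bar\pi$, namely $\mathrm{tr}: \bar\pi_*\omega_{\bar Y}\to \omega_{\bar X}$ (using $\omega_{\bar Y}\cong \bar\pi^!\omega_{\bar X}$, as $\bar Y$ is Cohen--Macaulay), restricts on $G$-invariants to $\bar\pi^G_*\omega_{\bar Y}\to \omega_{\bar X}$. Over the open set $U\subseteq \bar X$ where $\bar\pi$ is \'{e}tale one checks directly that this is an isomorphism: there $\omega_{\bar Y}|_{\bar\pi^{-1}U}\cong \bar\pi^*\omega_U$ and $(\bar\pi_*\mathcal O_{\bar Y})^G = \mathcal O_{\bar X}$, invariants being exact in characteristic zero. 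Since both $\bar\pi^G_*\omega_{\bar Y}$ and $\omega_{\bar X}$ are reflexive, $\bar X$ is normal, and $\bar X\setminus U$ has codimension $\geq 2$, this isomorphism extends over all of $\bar X$, giving $\bar\pi^G_*\omega_{\bar Y}\cong\omega_{\bar X}$. Combining the two identifications yields the lemma.

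I expect the main obstacle to be the careful handling of the quotient side near the branch locus: verifying that the trace is the correct morphism, that passing to $G$-invariants commutes with the relevant operations (harmless in characteristic zero, where $(-)^G$ is an exact direct summand), and, above all, justifying the reflexivity of both sheaves so that the codimension-one comparison propagates. The free-in-codimension-one observation is exactly what makes this extension available, so I would establish it first; note that Lemma \ref{vr} is in the same spirit but does not apply verbatim here, since its base must be smooth while $\bar X$ is singular. Everything else is a formal consequence of Grothendieck duality for finite and proper morphisms together with the rationality of quotient singularities.
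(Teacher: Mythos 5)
Your proposal is correct, but it takes a genuinely different route from the paper's. The paper never leaves the smooth model: it applies Lemma \ref{vr}(3) to the finite cover $\pi'\colon Y'\to X$ of the smooth variety $X$ (this is exactly where the standing assumption that $\pi'$ is branched along a normal crossing divisor is used) to get $\omega_X\cong \pi''^G_*\omega_{Y''}$, and then pushes this identity down to $\bar X$ using the commutativity $\mu\circ\pi''=\bar\pi\circ\nu$ together with $\nu_*\omega_{Y''}=\omega_{\bar Y}$ (a birational morphism onto the smooth $\bar Y$). You instead work directly on the singular quotient $\bar X$ and identify both sides with $\omega_{\bar X}$: the right-hand side via rationality of quotient singularities (Kempf's criterion, Grauert--Riemenschneider), the left-hand side via the trace map of the finite morphism $\bar\pi$, the key observation that the diagonal action is free in codimension one (which correctly exploits the faithfulness of the $G$-action on $F$, a standing hypothesis of the section, together with faithfulness on $C$), and the extension of an isomorphism of reflexive sheaves across a codimension-two locus. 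What your approach buys: it bypasses the auxiliary diagram $Y''\to Y'\to Y\to X$ and the normal-crossing branch assumption entirely, and it isolates the cleaner intermediate statement $\bar\pi^G_*\omega_{\bar Y}\cong\omega_{\bar X}\cong\mu_*\omega_X$. What the paper's approach buys: it reuses the diagram and Lemma \ref{vr}, which are needed anyway for Lemmas \ref{istfc1} and \ref{istfc2}, and avoids duality and reflexive-sheaf arguments on the singular $\bar X$. One small inaccuracy in your write-up, harmless because it is never used: quotient singularities are log terminal and rational but not canonical in general (e.g.\ the cyclic quotient $\frac{1}{3}(1,1)$); your argument only needs rationality, the resulting Cohen--Macaulayness, and normality.
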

\begin{proof}
By Lemma \ref{vr} (3), we know that $\omega_X \cong \pi''^G_* \omega_{Y''}$. Then the lemma follows by $\nu_*\omega_{Y''} = \omega_{\bar{Y}}$.
\end{proof}

\begin{Lemma}\label{istfc1}
Assume that $r_1 =p_g(F) > 0$. Then $\deg(f_*\omega_{X/B}) = 0$ if and only if for every $P \in R_{\pi_0}$, the subgroup $G_P \vartriangleleft G$ fixing $P$, acts trivially on $H^0(F, \omega_F)$.
\end{Lemma}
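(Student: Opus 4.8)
The plan is to compute $\deg(f_*\omega_{X/B})$ by relating it to the $G$-equivariant structure on $H^0(F,\omega_F)\otimes H^0(C,\omega_C)$, using Lemma \ref{gai} to replace $X$ by the quotient $\bar{X}=(F\times C)/G$. By Lemma \ref{gai} we have $\mu_*\omega_X\cong\bar{\pi}^G_*\omega_{\bar{Y}}$, and since $\omega_{\bar{Y}}=\omega_{F\times C}\cong p_F^*\omega_F\boxtimes p_C^*\omega_C$, taking the pushforward along $g:\bar{Y}=F\times C\to C$ and then the $G$-invariants should give $f_*\omega_{X/B}$ as a $G$-invariant summand of a sheaf built from $H^0(F,\omega_F)\otimes g_*\omega_{\bar{Y}/C}$. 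The first step is therefore to make this identification precise: I would show $f_*\omega_{X/B}\cong\bigl(\,p_{B*}\bigl((H^0(F,\omega_F)\otimes\omega_C)\,\bigr)\,\bigr)^G$ suitably interpreted, i.e. descend the constant bundle $H^0(F,\omega_F)\otimes\omega_C$ on $C$ to $B$ under the $G$-action, where $G$ acts on $H^0(F,\omega_F)$ by its faithful action on $F$ and on $\omega_C$ by pullback.

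\emph{Decomposing into eigen-line-bundles on $B$.} The key step is to diagonalize. First I would decompose $H^0(F,\omega_F)=\bigoplus_\chi V_\chi$ into isotypic components for the (abelian-on-each-inertia) action; more robustly, since $G_P$ is cyclic for each ramification point, I would work place-by-place. Writing $V:=H^0(F,\omega_F)$ as a $G$-representation, the invariant sheaf $\bigl(V\otimes g_*\omega_{\bar Y/C}\bigr)^G$ descends to a vector bundle on $B$ whose degree I want to compute via Riemann–Hurwitz/covering theory. The standard mechanism (as in the theory of $G$-sheaves on covers, e.g. the Chevalley–Weil / Hurwitz formula for the degree of descended eigensheaves) expresses $\deg$ of the descent in terms of the local characters of the $G_P$-action at each $P\in R_{\pi_0}$ weighed against the ramification. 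Since $\omega_C=\pi_0^*\omega_B\otimes\mathcal{O}_C(R_{\pi_0})$, the "extra" positivity $\deg\omega_{C/B}>0$ comes entirely from $R_{\pi_0}$, so after taking invariants the degree of $f_*\omega_{X/B}$ is a sum of nonnegative local contributions, one for each ramification point $P$, each contribution being strictly positive exactly when $G_P$ acts nontrivially on $V=H^0(F,\omega_F)$.

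\emph{Reading off the equivalence.} With the local contributions isolated, both directions follow. If every inertia group $G_P$ acts trivially on $H^0(F,\omega_F)$, then at each ramification point the relevant characters are trivial, every local contribution vanishes, and summing gives $\deg(f_*\omega_{X/B})=0$. Conversely, since $r_1=p_g(F)>0$ guarantees $V\neq 0$, a nontrivial action of some $G_P$ on $V$ forces a strictly positive local term; because all terms are nonnegative the total degree is then strictly positive, so $\deg(f_*\omega_{X/B})=0$ forces triviality at every $P$. I would phrase the local computation carefully using that $G_P$ is cyclic and that the eigenvalue of a generator on the cotangent line at $P$ together with its eigenvalues on $V$ determine the local descent discrepancy.

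\emph{Anticipated obstacle.} The main difficulty is the precise bookkeeping in the descent/diagonalization step: identifying $f_*\omega_{X/B}$ with the $G$-invariants of $V\otimes g_*\omega_{\bar Y/C}$ and then computing the degree of that invariant subsheaf as a sum of local ramification contributions. One must handle the interaction between the $G_P$-action on the cotangent direction of $C$ at $P$ and the $G_P$-action on $V$, and verify that the "round-down" of fractional contributions (the source of nonnegativity) behaves as claimed; this is exactly the eigenvalue accounting where a trivial action on $V$ makes the local term an integer contributing nothing, while a nontrivial action produces a genuinely positive correction. The reduction to the curve case via the complete-intersection curve $B$ is already available from the constructions preceding the lemma, so the burden is entirely in this equivariant degree computation, and I would expect the cleanest route to be the explicit local-character formula rather than a global cohomological argument.
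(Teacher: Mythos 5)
Your proposal is correct and follows essentially the same route as the paper's proof: identify $f_*\omega_X$ locally with the $G_P$-invariants of $g_*\omega_{\bar Y}\cong H^0(F,\omega_F)\otimes\omega_C$, diagonalize the cyclic inertia action on $H^0(F,\omega_F)$ at each $P\in R_{\pi_0}$, and express $\deg(\pi_0)\deg(f_*\omega_{X/B})$ as the sum of the nonnegative local eigenvalue exponents $n_{P,i}$, which vanish exactly when every $G_P$ acts trivially. The paper simply carries out your ``Chevalley--Weil'' bookkeeping explicitly, exhibiting the invariant stalk basis $t^{k_P-n_{P,i}-1}\,dt\wedge s_i$ and cancelling $r_1\deg\omega_{C/B}$ against the correction terms via Riemann--Hurwitz.
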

\begin{proof}
Note that $G_P$ is cyclic. Assume $G_P$ is of order $k_P$. Take a generator $\sigma$ of $G_P$ and a local parameter $t$ of $C$ at $P$. We can assume $\sigma^*t = \xi t$ where $\xi$ is a $k_P^{\mathrm{th}}$ primitive root of $1$. We can choose a basis $s_i, i = 1,2, ..., r_1$ of $H^0(F, \omega_F)$ such that $\sigma^*s_i = \xi^{n_{P,i}} s_i$ where $0 \leq n_{P,i} \leq k_P-1$. Since $\pi_0: C \rightarrow B$ is a Galois cover, by Lemma \ref{gai} we have $f_*\omega_X \otimes \mathcal{O}_{\pi_0(P),B} \cong (g_*\omega_{\bar{Y}} \otimes \mathcal{O}_{P,C})^{G_P}$, so $t^{k_P - n_{P,i} - 1} dt \wedge s_i, i = 1,2, ..., r_1$ form a basis of $f_*\omega_X \otimes \mathcal{O}_{\pi_0(P),B}$.

Inversing the process above, we can take a basis $\alpha_i, i=1,2,...,r_1$ of $f_*\omega_X \otimes \mathcal{O}_{\pi_0(P), B}$ such that $t^{-(k_P - n_{P,i} - 1)}\pi^*\alpha_i, i = 1,2, ..., r_1$ form a basis of $g_*\omega_{\bar{Y}} \otimes \mathcal{O}_{P, C}$.
We conclude that
$$\deg(g_*\omega_{\bar{Y}}) = \deg(\pi_0^*\det (f_*\omega_{X})) + \sum_{P \in R_{\pi_0}}\sum_{i =1}^{i=r_1} (k_P - n_{P,i} - 1),$$
thus
\begin{equation}
\begin{split}
\deg(g_*\omega_{Y/C}) &= \deg(\pi_0^*\det (f_*\omega_{X})) + \sum_{P \in R_{\pi_0}}\sum_{i =1}^{i=r_1} (k_P - n_{P,i} - 1) - r_1\deg (\omega_C)\\
&= \deg(\pi_0^*\det (f_*\omega_{X/B})) + r_1\deg (\pi_0^*\omega_B) + \sum_{P \in R_{\pi_0}}\sum_{i =1}^{i=r_1} (k_P - n_{P,i} - 1) - r_1\deg (\omega_C)\\
& =  \deg (\pi_0)\deg(f_*\omega_{X/B}) + \sum_{P \in R_{\pi_0}}\sum_{i =1}^{i=r_1} (k_P - n_{P,i} - 1) - r_1\deg (\omega_{C/B})\\
& =  \deg (\pi_0)\deg(f_*\omega_{X/B}) + \sum_{P \in R_{\pi_0}}\sum_{i =1}^{i=r_1} (- n_{P,i})
\end{split}
\end{equation}

We can see that $\deg(\pi_0^*\det (g_*\omega_{Y/C})) = \deg(f_*\omega_{X/B})  =0$ if and only if $n_{P,i}= 0$ for every $P$ and $i$, which is equivalent that for every $P \in R_{\pi_0}$, $G_P$ acts trivially on $H^0(F, \omega_F)$.
\end{proof}

\begin{Lemma}\label{istfc2}
Assume that $m >1$ and $r_m = P_m(F)> 0$. Then $\deg(f_*\omega_{X/B}^m) = 0$ if and only if $\pi_0: C \rightarrow B$ is an \'{e}tale cover.
\end{Lemma}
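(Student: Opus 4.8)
The plan is to run the scheme of Lemma~\ref{istfc1} almost verbatim, with $\omega_{\bar Y}$ replaced by $\omega_{\bar Y}^m=\omega_F^m\boxtimes\omega_C^m$, and to exploit the fact that the extra factor of $m$ carried by the $C$-direction makes the local contributions at the ramification points rigid. As in that lemma, the computation of $\deg(f_*\omega_{X/B}^m)$ is reduced to an eigenvalue analysis on the smooth cover $\bar Y=F\times C$, where the sheaf is a box product and everything is explicit.

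First I would fix $P\in R_{\pi_0}$, a generator $\sigma$ of the cyclic stabiliser $G_P$ of order $k_P$, and a local parameter $t$ at $P$ with $\sigma^*t=\xi t$ for a primitive $k_P$-th root of unity $\xi$. Choosing an eigenbasis $s_1,\dots,s_{r_m}$ of $H^0(F,\omega_F^m)$ with $\sigma^*s_i=\xi^{n_{P,i}}s_i$ and $0\le n_{P,i}\le k_P-1$, the section $t^a(dt)^{\otimes m}\otimes s_i$ is $G_P$-invariant iff $a+m+n_{P,i}\equiv0\pmod{k_P}$, so the minimal invariant generators of $\omega_{\bar Y}^m$ near $P$ are $t^{a_{P,i}}(dt)^{\otimes m}\otimes s_i$ with $a_{P,i}\equiv-(m+n_{P,i})\pmod{k_P}$ and $0\le a_{P,i}\le k_P-1$. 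Comparing $\deg(g_*\omega_{\bar Y}^m)$ with $\deg(\pi_0^*\det f_*\omega_X^m)$ through the lengths $\sum_i a_{P,i}$, then passing to the relative sheaves and substituting $\deg(\omega_{C/B})=\sum_P(k_P-1)$ exactly as in Lemma~\ref{istfc1}, I expect the identity
\[\deg(\pi_0)\,\deg(f_*\omega_{X/B}^m)=\sum_{P\in R_{\pi_0}}\sum_{i=1}^{r_m}\bigl(m(k_P-1)-a_{P,i}\bigr).\]
For $m=1$ one has $a_{P,i}=k_P-1-n_{P,i}$ and the right-hand side collapses to $\sum_{P,i}n_{P,i}$, recovering Lemma~\ref{istfc1}.

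The decisive new input is the trivial bound $a_{P,i}\le k_P-1$, which yields $m(k_P-1)-a_{P,i}\ge(m-1)(k_P-1)$ for every $P$ and $i$. Since $m>1$ and $r_m=P_m(F)>0$, each summand is nonnegative and is strictly positive at every genuine ramification point (where $k_P\ge2$) --- unlike the $m=1$ case, where the contribution $n_{P,i}$ vanishes as soon as $G_P$ acts trivially on $H^0(F,\omega_F)$. Together with the weak positivity $\deg(f_*\omega_{X/B}^m)\ge0$ of \cite{Vie2}, the displayed formula forces $R_{\pi_0}=\emptyset$ whenever $\deg(f_*\omega_{X/B}^m)=0$, i.e.\ $\pi_0$ is \'{e}tale; the converse needs no computation, since for \'{e}tale $\pi_0$ the group $G$ acts freely, $Y=X\times_BC$ is smooth and birational to the trivial family $F\times C$ over $C$, so $\deg(f_*\omega_{X/B}^m)=\tfrac{1}{\deg\pi_0}\deg(g_*\omega_{Y/C}^m)=0$.

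The hard part will be the reduction step, because the $\omega^m$-analogue of Lemma~\ref{gai} is false for $m>1$: the quotient $\bar X=(F\times C)/G$ need not be canonical, so $\mu_*\omega_X^m$ can be strictly smaller than $\omega_{\bar X}^{[m]}=(\bar\pi_*\omega_{\bar Y}^m)^G$, and the invariant count above a priori only bounds $\deg(f_*\omega_{X/B}^m)$ from above. To obtain the matching lower bound I would first record that $\bar\pi$ is \'{e}tale in codimension one: any $\sigma\neq1$ has fixed locus $\mathrm{Fix}(\sigma|_F)\times\mathrm{Fix}(\sigma|_C)$ of codimension $\ge2$ in $\bar Y$, so $\bar\pi$ carries no ramification divisor, $\bar X$ is smooth at the generic point of every (multiple) fiber, and the entire discrepancy of $\mu$ is supported over finitely many points of the fibers over $R_{\pi_0}$. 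I would then transport the local analysis to the reflexive relative sheaf $\omega_{Y'/C}^{[m]}$ of the normalisation $\pi'\colon Y'\to X$, which is again \'{e}tale in codimension one and hence satisfies $\omega_{Y'}^{[m]}=\pi'^*\omega_X^m$ and $(\pi'_*\omega_{Y'}^{[m]})^G=\omega_X^m$ \emph{exactly}, and compare $Y'$ with its smooth model $Y''\to\bar Y$ by the normalisation/resolution comparison in the spirit of Lemma~\ref{vr}; the remaining point, which is the technical heart of the argument, is to check that the codimension-two correction coming from the non-canonical points cannot cancel the strictly positive multiple-fiber contribution read off at the generic point of each multiple fiber, where $\mu$ is an isomorphism.
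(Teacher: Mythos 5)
Your proposal stalls exactly where you say it does, and that point is not a removable technicality --- it is the whole content of the lemma for $m>1$. The displayed ``identity'' $\deg(\pi_0)\deg(f_*\omega_{X/B}^m)=\sum_{P}\sum_{i}\bigl(m(k_P-1)-a_{P,i}\bigr)$ is never established, and in general it is false: the eigenvalue count on $\bar Y=F\times C$ computes the degree of the pushforward of $(\bar\pi_*\omega_{\bar Y}^m)^G=\omega_{\bar X}^{[m]}$, whereas $f_*\omega_{X/B}^m=\bar f_*\mu_*\omega_{X/B}^m$ and the inclusion $\mu_*\omega_X^m\subseteq\omega_{\bar X}^{[m]}$ is strict as soon as $\bar X$ has non-canonical points. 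Such points really occur even though $\bar\pi$ is \'{e}tale in codimension one: a stabilizer of order $4$ acting with weights $(1,1)$ on $F$ at a fixed point and weight $1$ on $C$ produces a $\frac{1}{4}(1,1,1)$-singularity, which is log terminal but not canonical. Consequently all your reduction yields is $0\le\deg(\pi_0)\deg(f_*\omega_{X/B}^m)\le\sum_{P,i}\bigl(m(k_P-1)-a_{P,i}\bigr)$, the left bound from weak positivity and the right bound being your positive quantity; these are compatible, so no contradiction follows. What is needed is a positive \emph{lower} bound on $\deg(f_*\omega_{X/B}^m)$ in the presence of ramification, and that is precisely the step you label ``the technical heart'' and leave unproven. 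As written, your argument proves the easy \'{e}tale direction and the exponent arithmetic, but not the implication $\deg(f_*\omega_{X/B}^m)=0\Rightarrow R_{\pi_0}=\emptyset$.

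The paper closes exactly this gap by a different mechanism: it never computes degrees of invariants, but instead constructs an injection $\oplus^{r_m}\mathcal{O}_C((m-1)P)\hookrightarrow\pi_0^*f_*\omega_{X/B}^m$, which contradicts the fact that a nef vector bundle of degree zero on a curve admits no subsheaf of positive degree. The discrepancy problem you worry about is handled there by the claim $\clubsuit$, namely $\sum_ia_iE_i+\tilde{F}''_P\geq\nu^*F_P$, proved by adjunction along the smooth fiber $F_P$; this allows $(m-1)$ factors of $\nu^*\omega_{\bar Y/C}(F_P)$ to map into $\pi''^*\omega_{X/B}(E)$, while the remaining factor is kept as $\omega_{Y''/C}$ so that Lemma \ref{vr} (2) (via $\sigma_*\omega_{Y''/C}\cong\omega_{Y'/C}$) and the trace map of the normalization $\eta$ absorb all exceptional twists after pushforward. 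The positivity $(m-1)P$ itself comes from the observation that $\pi''$ is unramified along $\tilde{F}''_P$ while $\pi_0$ ramifies at $P$, forcing the comparison map $\omega_{Y''/C}\to\pi''^*\omega_{X/B}(E)$ to vanish along $\tilde{F}''_P$; raising this to the $(m-1)$-st power is what makes $m>1$ decisive, replacing your appeal to the bound $a_{P,i}\le k_P-1$. If you wish to salvage the invariant-theoretic route generalizing Lemma \ref{istfc1}, you would in effect have to reprove a statement of exactly this strength, since Lemma \ref{gai} has no analogue for $m>1$.
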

\begin{proof}
The direction ``if'' is easy. We focus on the other direction, and assume $\deg(f_*\omega_{X/B}^m) = 0$.

We argue by contrary. Let $P$ be a ramification point of $\pi_0$, $F_P$ the fiber of the fibration $\bar{Y} \rightarrow C$ over $P$, $F''_P$ the  fiber of the fibration $Y'' \rightarrow C$ over $P$ and $\tilde{F}''_P$ the strict transform of $F_P$ via $\nu$. Denote by $E_i$ the $\nu$-exceptional components. Write that $K_{Y''} \sim \nu^*K_{\bar{Y}} + \sum_ia_iE_i$ and $\nu^*F_P =  \tilde{F}''_P + \sum_ib_iE_i$ where $a_i>0$, $b_i \geq 0$ and $b_i = 0$ if $E_j$ does not intersect $\tilde{F}''_P$. Here we claim that
$$\clubsuit: \sum_ia_iE_i + \tilde{F}''_P \geq \nu^*F_P.$$
Indeed, adjunction formula gives
$$K_{\tilde{F}''_P } \sim (K_{Y''} + \tilde{F}''_P)|_{\tilde{F}''_P } \sim(\nu^*(K_{\bar{Y}} + F_P) + \sum_i(a_i - b_i)E_i)|_{\tilde{F}''_P }.$$
Since $F_P$ is smooth, we have that $(a_i - b_i)\geq 0$, thus $\sum_ia_iE_i  + \tilde{F}''_P \geq \nu^*F_P$.

Since $\pi_0: C \rightarrow B$ is flat, we have $\omega_{Y/C} \cong \pi^*\omega_{X/B}$. From the finite morphism $\eta: Y' \rightarrow Y$, we have the trace map
$$\alpha_0: \eta_*\omega_{Y'/C} \rightarrow \omega_{Y/C},$$ and the pull-back homomorphism
$$\alpha_1: \eta^*\eta_*\omega_{Y'/C} \rightarrow \eta^*\omega_{Y/C}.$$
The natural homomorphism $ \eta^*\eta_*\omega_{Y'/C}\rightarrow \omega_{Y'/C}$ being surjective (since $\eta$ is affine) and $\omega_{Y/C}$ being invertible, the homomorphism $\alpha_1$ factors through a homomorphism
$$\alpha_2: \omega_{Y'/C} \rightarrow \eta^*\omega_{Y/C}.$$
By the isomorphism $\sigma_*\omega_{Y''/C} \cong \omega_{Y'/C}$ (Lemma \ref{vr} (2)), we get
$$\sigma^*\omega_{Y'/C} \cong \sigma^*\sigma_*\omega_{Y''/C} \rightarrow \omega_{Y''/C}$$
which is isomorphic outside the exceptional locus.
So for some effective $\sigma$-exceptional divisor $E$, the pull-back $\sigma^*\alpha_2$ induces an injection
$$\alpha_3: \omega_{Y''/C} \rightarrow \sigma^*\eta^*\omega_{Y/C}(E) \cong \pi''^*\omega_{X/B}(E).$$
Note that $\alpha_3$ is not necessarily surjective. Since $\pi'': Y'' \rightarrow X$ is unramified along $\tilde{F}''_P$ while $\pi_0: C \rightarrow B$ is ramified along $P$, the image of $\alpha_3$ is contained in $\mathcal{O}_{Y''}(-\tilde{F}''_P) \otimes \pi''^*\omega_{X/B}(E)$.
So we get a homomorphism
$$\alpha_3': \omega_{Y''/C}(\tilde{F}''_P) \rightarrow \sigma^*\eta^*\omega_{Y/C}(E) \cong \pi''^*\omega_{X/B}(E).$$
By $\clubsuit$, we have an injection $\nu^*\omega_{\bar{Y}/C}(F''_P) \rightarrow \omega_{Y''/C}(\tilde{F}''_P)$, and get the following homomorphism by composing this injection with $\alpha_3'$
$$\alpha_4: \nu^*\omega_{\bar{Y}/C}(F''_P) \rightarrow \sigma^*\eta^*\omega_{Y/C}(E) \cong \pi''^*\omega_{X/B}(E).$$
Tensoring $\alpha_4^{m-1}$ with $\mathrm{id}_{\omega_{Y''/C}}$ gives
$$\alpha_5: \nu^*\omega_{\bar{Y}/C}^{m-1}((m-1)F''_P) \otimes \omega_{Y''/C} \rightarrow \pi''^*\omega_{X/B}^{m-1} \otimes \omega_{Y''/C} ((m-1)E).$$
Applying $\sigma_*$ to $\alpha_5$, by projection formula and Lemma \ref{vr} (2), gives
$$\alpha_6: \sigma_*(\nu^*\omega_{\bar{Y}/C}^{m-1}((m-1)F''_P) \otimes \omega_{Y''/C}) \rightarrow \eta^*\pi^*\omega_{X/B}^{m-1} \otimes \sigma_*\omega_{Y''/C} \cong \eta^*\pi^*\omega_{X/B}^{m-1} \otimes \omega_{Y'/C}$$
Similarly applying $\eta_*$ to $\alpha_7$, then composing with $\mathrm{id}_{\pi^*\omega_{X/B}^{m-1}}\otimes \alpha_0$ gives
$$\alpha_7: \eta_*\sigma_*(\nu^*\omega_{\bar{Y}/C}^{m-1}((m-1)F''_P) \otimes \omega_{Y''/C}) \rightarrow \pi^*\omega_{X/B}^{m-1} \otimes \eta_*\omega_{Y'/C} \rightarrow \pi^*\omega_{X/B}^{m-1} \otimes \omega_{Y/C} \cong \pi^*\omega_{X/B}^{m}.$$
Applying $g_*$ to $\alpha_6$ we obtain the injection
\begin{equation}
\begin{split}
g_*\eta_*\sigma_*(\nu^*\omega_{\bar{Y}/C}^{m-1}((m-1)F''_P)\otimes\omega_{Y''/C}) &\cong \bar{g}_*\nu_*(\nu^*(\omega_{\bar{Y}/C}^{m-1}\otimes \bar{g}^*\mathcal{O}_{C}((m-1)P)) \otimes \omega_{Y''/C}) \\
&\cong \oplus^{r_m} \mathcal{O}_{C}((m-1)P) \rightarrow g_*\pi^*\omega_{X/B}^{m} \cong \pi_0^*f_*\omega_{X/B}^{m}.
\end{split}
\end{equation}
where the last ``$\cong$'' is due to that the base change $\pi_0: C \rightarrow B$ is flat.
A contradiction follows by $\deg(f_*\omega_{X/B}^m) = 0$.
\end{proof}

\begin{proof}[Proof of Theorem \ref{istf}]
Let $C, X_C, f_C$ be as in AS(2), and assume WAS(2). By the results of Sec. \ref{fbr}, there exists a finite cover $\pi_0: \tilde{C} \rightarrow C$ such that $X_C \times_C \tilde{C}$ is birational to $F \times \tilde{C}$. We can assume $\pi_0$ is a Galois cover with Galois group $G$, which acts on $F$ faithfully, such that $X_C$ is birational to $(F \times \tilde{C})/G$ where $G$ acts on $F \times \tilde{C}$ diagonally.

Let $X_C'$ be a smooth resolution of $(F \times \tilde{C})/G$ and $f_C': X_C'\rightarrow C$ the natural fibration. Then $f_{C*} \omega_{X_C/C}^m \cong f'_{C*} \omega_{X'_C/C}^m$. Applying Lemmas \ref{istfc1} and \ref{istfc2} to the fibration $f_C'$, we can conclude Theorem \ref{istf}.
\end{proof}

\section{The fibrations with general fibers having trivial canonical bundles}

\textbf{Notations and assumptions:}
Let $X$ be a smooth projective variety, $f: X \rightarrow C$ a fibration to a smooth projective curve and $n$ the dimension of general fibers. Suppose that general fibers have good minimal models with trivial canonical bundles.

\begin{Theorem}\label{K0}
If $\deg(f_*\omega_{X/C}) = 0$, then $f$ is birationally isotrivial.
\end{Theorem}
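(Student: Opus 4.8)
The plan is to reduce, via the minimal model program, to a situation governed by the canonical bundle formula for Calabi--Yau fibrations, and then to invoke Kawamata's criterion for vanishing variation. First I would pass to a relative good minimal model $\bar{f}\colon \bar{X}\rightarrow C$ of $f$: since the general fibre has a good minimal model, running a relative MMP over $C$ produces $\bar{X}$ with $\mathbb{Q}$-factorial canonical singularities and $K_{\bar X/C}$ relatively semiample, and by the birational invariance recorded in Section \ref{fbr}(1) the degrees $\deg(f_*\omega^m_{X/C})=\deg(\bar f_*\omega^m_{\bar X/C})$ are unchanged. Because the general fibre $\bar F$ satisfies $\omega_{\bar F}\cong\mathcal{O}_{\bar F}$, every $\bar f_*\omega^m_{\bar X/C}$ has rank $P_m(F)=1$ and is therefore a line bundle on the smooth curve $C$; moreover $K_{\bar X/C}$ is trivial on the generic fibre, hence $\mathbb{Q}$-linearly equivalent to a vertical $\mathbb{Q}$-divisor supported on the singular fibres.

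The key step is the canonical bundle formula for $\bar f$: one has $K_{\bar X/C}\sim_{\mathbb{Q}}\bar f^*(B+M)$, where $B=\sum_p b_p[p]$ is the effective discriminant (boundary) divisor with coefficients $b_p\in[0,1)$, and $M$ is the moduli $\mathbb{Q}$-divisor, which is nef with $\deg M\geq 0$. Pushing forward $\omega_{\bar X/C}$ and using that the coefficients of $B$ lie in $[0,1)$, so that multiple fibres and other boundary terms contribute $\lfloor b_p\rfloor=0$ at level $m=1$, I would show $\deg(f_*\omega_{X/C})=\deg M$. This is exactly the phenomenon already visible for elliptic surfaces, where the multiple fibres do not affect $\deg(f_*\omega_{X/C})$ but do affect $\deg(f_*\omega^m_{X/C})$ for $m\geq 2$. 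The hypothesis $\deg(f_*\omega_{X/C})=0$ then forces $\deg M=0$.

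Finally, by Kawamata's theory of the variation $\mathrm{Var}(f)$ (\cite{Ka}, cf. Section \ref{fbr}), the vanishing of the degree of the moduli part of a fibration whose fibres admit good minimal models yields $\mathrm{Var}(f)=0$, i.e. $\bar f$, and hence $f$, is birationally isotrivial. The main obstacle I anticipate is precisely the second step: one must separate the contribution of the boundary $B$ (multiple and otherwise non-reduced fibres, which can occur even for an isotrivial family, as the elliptic example in the introduction shows) from the genuine moduli contribution $M$, and argue that the single numerical condition $\deg(f_*\omega_{X/C})=0$ already annihilates $M$. It is here that the trivial-canonical-bundle (index one) hypothesis is essential: it guarantees that the relevant push-forwards are line bundles and that at level $m=1$ the boundary is invisible, so that $\deg(f_*\omega_{X/C})$ computes exactly $\deg M$ rather than the larger quantity $\deg\lfloor B+M\rfloor$ controlling $\deg(f_*\omega^m_{X/C})$ for $m\geq 2$. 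One cannot short-circuit the argument by first proving $\deg(f_*\omega^m_{X/C})=0$ for some $m>1$ and quoting the criterion stated in the introduction, since that quantity is genuinely positive in the presence of multiple fibres.
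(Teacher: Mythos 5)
Your reduction to a relative good minimal model is fine, but the two pivots of your argument do not hold as stated. First, the claim $\deg(f_*\omega_{X/C})=\deg M$ is false: non-multiple singular fibres contribute to $\deg(f_*\omega_{X/C})$ already at level $m=1$, even though their discriminant coefficients lie in $(0,1)$. A counterexample is the resolution of $(E\times\tilde{C})/\langle(-1)_E\times\sigma\rangle$, an \emph{isotrivial} elliptic surface whose fibres over the branch points are of type $I_0^*$: there are no multiple fibres, each such point carries discriminant coefficient $1/2$, the $j$-map is constant so $\deg M=0$, and yet $\deg(f_*\omega_{X/C})=\chi(\mathcal{O}_X)>0$. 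So the boundary is not ``invisible'' at $m=1$; what survives is only the one-sided implication you actually need, namely that $\deg(f_*\omega_{X/C})=0$ forces $\deg M=0$, and even this is cleanest after a finite base change making the local monodromies unipotent (Lemma \ref{bsch}), where the moduli part can be identified with the Hodge bundle $F^{n,0}$.

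The fatal gap is the final step: there is no citable result of ``Kawamata's theory'' asserting that $\deg M=0$ implies $\mathrm{Var}(f)=0$. What \cite{Ka} Theorem 1.1 gives is $\mathrm{Var}(f)\leq\kappa(C,\det f_*\omega^m_{X/C})$ for \emph{some sufficiently divisible} $m>1$; precisely because no such statement is available at level $1$ (or for the moduli part alone), the paper must prove Theorem \ref{K0} and poses the general $m=1$ case as an open Problem. To bridge from $\deg M=0$ to $\mathrm{Var}(f)=0$ one needs genuine work that your proposal omits. One possible bridge is semistable reduction: after a base change the discriminant vanishes, $\deg f'_*\omega^m_{X'/C'}$ can be related to $m\deg M'$, and Kawamata's theorem applies; note that your closing remark disavows exactly this route, and your objection (positivity caused by multiple fibres) is valid only \emph{before} base change --- base change removes the multiple fibres while preserving $\mathrm{Var}$ and, by Lemma \ref{bsch}, the vanishing of $\deg f_*\omega$. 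The paper's own bridge is Hodge-theoretic: $\deg F^{n,0}=0$ with unipotent monodromy forces the composite $\delta_t\circ\lambda_t$ of the Kodaira--Spencer map with the period pairing to vanish at general $t$ (Lemma \ref{bigness}), and one then needs the \emph{infinitesimal Torelli theorem for the actual fibres} $F_t$ to conclude $\lambda_t=0$. That point is the heart of the matter: $F_t$ is smooth but only birational to a $K$-trivial minimal model $\bar{F}_t$ (its canonical divisor is a nonzero effective exceptional divisor), so infinitesimal Torelli is not classical for it, and Steps 1--3 of the paper's proof are devoted to transferring the injectivity of the period map from $\bar{F}_t$, where it is immediate from $\omega_{\bar{F}_t}\cong\mathcal{O}_{\bar{F}_t}$, to its equivariant resolution $F_t$ via Grothendieck duality. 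Neither bridge appears in your proposal, so the argument as written does not prove the theorem.
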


Before the proof, we introduce two lemmas.
\begin{Lemma}\label{bsch}
Let $C' \rightarrow C$ be a finite morphism between two smooth curves, let $X'$ be a resolution of the fiber product $X \times_C C'$, and denote by $f':X' \rightarrow C'$ the natural fibration. If $\deg(f_*\omega_{X/C}) = 0$, then $\deg(f'_*\omega_{X'/C'}) = 0$.
\end{Lemma}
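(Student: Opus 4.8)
The plan is to sandwich $\deg(f'_*\omega_{X'/C'})$ between $0$ and $\deg(\pi_0)\cdot\deg(f_*\omega_{X/C}) = 0$, where $\pi_0: C'\to C$ denotes the given finite morphism. The lower bound $\deg(f'_*\omega_{X'/C'})\ge 0$ is immediate from Viehweg's weak positivity of $f'_*\omega_{X'/C'}$ (\cite{Vie2}), exactly as recalled for $f_*\omega_{X/C}$ in the introduction. So the whole content is the upper bound, which I would extract from a natural injection of sheaves on $C'$
$$f'_*\omega_{X'/C'}\hookrightarrow \pi_0^*f_*\omega_{X/C}$$
between two torsion-free, hence locally free, sheaves of the same rank $r = h^0(F,\omega_F)$ on the smooth curve $C'$. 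An injection of equal-rank locally free sheaves on a smooth projective curve can only increase the degree of the target (its cokernel is a torsion sheaf), so it yields $\deg(f'_*\omega_{X'/C'})\le \deg(\pi_0^*f_*\omega_{X/C}) = \deg(\pi_0)\deg(f_*\omega_{X/C}) = 0$.

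To produce this injection I would set $Y := X\times_C C'$ with its two projections $p: Y\to X$ and $g: Y\to C'$, and let $\sigma: X'\to Y$ be the resolution, so that $f' = g\circ\sigma$. Since $\pi_0$ is a finite morphism of smooth curves it is flat, and $f: X\to C$ is a Gorenstein (indeed lci) morphism, so $g: Y\to C'$ is again flat and Gorenstein; hence $\omega_{Y/C'}$ is invertible and the relative dualizing sheaf commutes with this base change, $\omega_{Y/C'}\cong p^*\omega_{X/C}$. The resolution $\sigma$ provides the trace map $\sigma_*\omega_{X'}\to\omega_Y$, which after twisting by $g^*\omega_{C'}^{-1}$ and using the projection formula becomes $\sigma_*\omega_{X'/C'}\to\omega_{Y/C'}$; this map is an isomorphism over the smooth (hence dense) locus of $Y$ and its source is torsion-free, so it is injective. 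Pushing forward by the proper map $g$ (which is left exact) and invoking flat base change $g_*p^*\cong \pi_0^*f_*$ for the cartesian square, I obtain
$$f'_*\omega_{X'/C'} = g_*\sigma_*\omega_{X'/C'}\hookrightarrow g_*\omega_{Y/C'}\cong g_*p^*\omega_{X/C}\cong \pi_0^*f_*\omega_{X/C},$$
which is the desired injection. Equality of ranks is clear because a general fiber of $f'$ is isomorphic to a general fiber $F$ of $f$, so both sides have rank $h^0(F,\omega_F)$; moreover $f_*\omega_{X/C}$ is torsion-free (a torsion section would lift to a torsion section of the torsion-free sheaf $\omega_X$), so $\deg(\pi_0^*f_*\omega_{X/C}) = \deg(\pi_0)\deg(f_*\omega_{X/C})$.

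The main obstacle is that $Y = X\times_C C'$ need not be normal, and can even be non-reduced over the points where $f$ has multiple fibers and $\pi_0$ ramifies; this is what makes the middle step delicate, since ``a resolution of $Y$'' and the trace injection $\sigma_*\omega_{X'/C'}\hookrightarrow\omega_{Y/C'}$ must be interpreted with care. The two features that rescue the argument are that $Y$ is Gorenstein, so $\omega_{Y/C'}$ remains invertible and base-change compatible irrespective of the singularities, and that $\omega_{X'/C'}$ together with the image of the trace map are torsion-free and coincide over the dense open locus on which $\pi_0$ is \'etale and $f$ is smooth; injectivity then follows formally. One would carry out the local analysis near the finitely many bad fibers in the spirit of the computation of Lemma \ref{istfc2}, but since only the degree on $C'$ is at stake, the contributions at those points merely enlarge the cokernel, which is harmless for the upper bound. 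Combining the upper bound with weak positivity forces $\deg(f'_*\omega_{X'/C'}) = 0$.
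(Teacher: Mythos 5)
Your proof is correct, but it takes a genuinely different route from the paper, whose entire proof is a one-line citation: ``This follows from \cite{Ka} Corollary 5.4,'' i.e.\ the author defers the whole statement to Kawamata's base-change result for direct images of relative canonical sheaves. Your argument is self-contained and uses only tools already quoted elsewhere in the paper: Viehweg's weak positivity \cite{Vie2} for the lower bound $\deg(f'_*\omega_{X'/C'})\geq 0$ (exactly as recalled in the introduction), and for the upper bound the trace-map injection $f'_*\omega_{X'/C'}\hookrightarrow \pi_0^*f_*\omega_{X/C}$, built from (i) invertibility and flat base-change compatibility of $\omega_{Y/C'}$ for the Gorenstein morphism $g:Y=X\times_C C'\to C'$ (a fact the paper itself recalls at the start of its section on isotrivial fibrations), (ii) the duality trace map $\sigma_*\omega_{X'}\to\omega_Y$, injective because it is generically an isomorphism with torsion-free source, and (iii) flat base change $g_*p^*\cong\pi_0^*f_*$; an injection of equal-rank vector bundles on a smooth projective curve then gives $\deg(f'_*\omega_{X'/C'})\leq\deg(\pi_0)\deg(f_*\omega_{X/C})=0$. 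All of these steps are sound. One remark: your worry about $Y$ being non-reduced is vacuous in characteristic zero --- $Y$ is flat over $C'$ and its generic fiber is the base field extension of the geometrically integral generic fiber of $f$, so $Y$ is integral; multiple fibers of $f$ over ramification points of $\pi_0$ produce singularities of $Y$, not non-reducedness, so no local analysis at the bad fibers is actually needed. As for what each approach buys: the paper's citation is instantaneous but opaque, while your argument is elementary, keeps the paper self-contained, and proves the sharper inequality $\deg(f'_*\omega_{X'/C'})\leq\deg(\pi_0)\,\deg(f_*\omega_{X/C})$ rather than only the vanishing statement.
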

\begin{proof}
This follows from \cite{Ka} Corollary 5.4.
\end{proof}

Using \cite{Ka1} Theorem 3 and the notation there, we have
\begin{Lemma}\label{bigness}
Let $U \subset C$ be an open set, $H_0$ a variation of Hodge structure with unipotent local monodromies and $H$ the extension of $H_0$ on $C$. If for general point $t \in C$ the natural homomorphism $T_{C,t} \rightarrow \mathrm{Hom}(F^{n,0}, F^{n-1,1})$ is injective, then $\deg(F^{n,0}) > 0$.
\end{Lemma}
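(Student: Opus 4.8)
The plan is to read the hypothesis as a statement about the top component of the Higgs field of the variation, and then to deduce $\deg(F^{n,0})>0$ from the semipositivity of the Hodge metric on the top Hodge bundle; this positivity, together with the boundary analysis forced by the unipotent monodromy, is exactly what \cite{Ka1} Theorem 3 provides, and I indicate the mechanism below. First I would fix notation: the variation $H_0$ is polarized of weight $n$, and I write $E^{p,q}=\mathrm{Gr}_F^p$ for the graded pieces of the Hodge filtration, so that $E^{n,0}=F^{n}=F^{n,0}$ is the top Hodge bundle. Because the local monodromies are unipotent, the Deligne canonical extension $H$ is a genuine vector bundle on $C$ whose associated graded pieces extend the $E^{p,q}$, and the Gauss--Manin connection induces a logarithmic Higgs field
$$\theta\colon E^{p,q}\longrightarrow E^{p-1,q+1}\otimes\Omega^1_C(\log(C\setminus U)).$$
Contracting the top component $\theta_{n,0}\colon F^{n,0}\to F^{n-1,1}\otimes\Omega^1_C$ with a tangent vector $v\in T_{C,t}$ recovers precisely the map $v\mapsto\theta_{n,0}(v)\in\mathrm{Hom}(F^{n,0},F^{n-1,1})$ of the statement. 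Since $\dim T_{C,t}=1$, the injectivity hypothesis is equivalent to the assertion that $\theta_{n,0}$ is nonzero at a general point of $C$.

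Next I would invoke Griffiths' curvature formula for the Hodge metric $h$ on $E^{n,0}$ over $U$. As there is no Hodge piece above the top one, only one term survives and the Chern curvature takes the form
$$\Theta_h(F^{n,0})=-\,\theta_{n,0}^{\dagger}\wedge\theta_{n,0},$$
which is Griffiths-semipositive; concretely, writing $\theta_{n,0}=A\,dz$ locally, one has $\tfrac{\sqrt{-1}}{2\pi}\,\mathrm{tr}\,\Theta_h=\tfrac{1}{2\pi}\,\mathrm{tr}(A^{\dagger}A)\,dx\wedge dy\geq 0$, with equality exactly where $\theta_{n,0}$ vanishes. This is the higher-weight counterpart of Fujita's nefness of $f_*\omega_{X/C}$ for a weight-one variation.

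Finally I would integrate. Using the unipotency of the local monodromies and Schmid's norm estimates (the nilpotent orbit theorem), the singular Hermitian metric $h$ degenerates only mildly along $C\setminus U$, so that the Chern--Weil integral of the extended metric computes the degree of the Deligne extension:
$$\deg(F^{n,0})=\frac{\sqrt{-1}}{2\pi}\int_C\mathrm{tr}\,\Theta_h.$$
The integrand is nonnegative everywhere by semipositivity and strictly positive on the nonempty open set where $\theta_{n,0}\neq 0$, which the hypothesis guarantees; hence $\deg(F^{n,0})>0$.

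The main obstacle is the boundary analysis: turning a curvature integral over the open curve $U$ into the degree of the extended bundle on the compact curve $C$, in the presence of the degeneration of the Hodge metric along $C\setminus U$. This is exactly the point at which the unipotency of the local monodromies is essential, and it is the technical content packaged by \cite{Ka1} Theorem 3. Granting that theorem, the conclusion is immediate from the semipositivity of $\Theta_h(F^{n,0})$ together with the generic nonvanishing of $\theta_{n,0}$.
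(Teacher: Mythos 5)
Your proposal is correct in substance, but note how it sits relative to the paper: the paper gives no internal proof of this lemma at all --- it is quoted directly from \cite{Ka1} (Kawamata, ``Kodaira dimension of certain algebraic fiber spaces,'' Theorem 3), with the preamble ``Using \cite{Ka1} Theorem 3 and the notation there.'' What you have written is essentially the standard proof of Kawamata's theorem itself: identifying the hypothesis with generic nonvanishing of the top Higgs field component $\theta_{n,0}$ (correct, since $T_{C,t}$ is one-dimensional, so injectivity of the map is just nonvanishing), applying Griffiths' curvature computation to get pointwise semipositivity of $\frac{\sqrt{-1}}{2\pi}\mathrm{tr}\,\Theta_h(F^{n,0})$ with strict positivity exactly on the (Zariski-open, by hypothesis nonempty) locus where $\theta_{n,0}\neq 0$, and then using unipotency together with Schmid's norm estimates to identify the convergent Chern--Weil integral over $C$ with the degree of the canonical extension. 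Two caveats are worth recording. First, the argument requires the variation to be polarized --- the Hodge metric is the metric attached to a polarization; this is implicit in the lemma and automatic in the intended geometric application to $R^nf_*\mathbb{C}$, but it should be stated. Second, your closing sentence (``Granting that theorem, the conclusion is immediate'') is circular as phrased, since \cite{Ka1} Theorem 3 \emph{is} the statement being proved; the boundary analysis must be charged to Schmid's nilpotent orbit theorem and norm estimates (which you do invoke earlier), not to the theorem itself. With that attribution straightened out, your sketch is a correct account of the mechanism and in effect supplies the proof that the paper omits by citation.
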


\begin{proof}[Proof of Theorem \ref{K0}]
The fibration $f: X \rightarrow C$ has a birational model $\bar{f}: \bar{X} \rightarrow C$ such that general fiber $\bar{F}$ is a good minimal model. Replace $X$ by an equivariant resolution $\mu: X \rightarrow \bar{X}$. Then for the fiber $F = \mu^*\bar{F}$ over $\bar{F}$, the restriction map $\mu: F \rightarrow \bar{F}$ is an equivariant resolution, where the pull-back homomorphism $\mu^*T_{\bar{F}} \rightarrow T_F$ is induced as follows
\[\begin{CD}
\mu^*T_{\bar{F}}    @> >>    \mu^*T_{\bar{X}} \\
@V VV          @V VV               \\
T_F   @> >>     T_X
\end{CD} \]
In particular the following composite homomorphism is injective
$$\mathrm{Ext}^1(\Omega_F^1, \mathcal{O}_F)=H^1(F, T_{F})  \rightarrow H^1(\bar{F}, T_{\bar{F}}) \rightarrow \mathrm{Ext}^1(\Omega_{\bar{F}}^1, \mathcal{O}_{\bar{F}}).$$

We only need to prove that for general $t \in C$ the Kodaira-Spencer map is zero
$$\lambda_t: T_{t, C} \rightarrow H^1(F_t, T_{F_t})$$
where $F_t$ is the fiber over $t$.

With the help of Lemma \ref{bsch}, up to a base change we can assume that over an open set $U \subset C$, the natural variation of the Hodge structure on $R^nf_*\mathbb{C}$ has unipotent local monodromies. Using Lemma \ref{bigness}, by assumption that $\deg(f_*\omega_{X/C}) = 0$, the following composite map must be zero
$$\delta_t \circ\lambda_t: T_{t, C} \rightarrow H^1(F_t, T_{F_t}) \rightarrow \mathrm{Hom}(H^0(F_t, \Omega^n_{F_t}), H^1(F_t, \Omega^{n-1}_{F_t}))$$
where $\delta_t: H^1(F_t, T_{F_t}) \rightarrow \mathrm{Hom}(H^0(F_t, \Omega^n_{F_t}), H^1(F_t, \Omega^{n-1}_{F_t}))$ is the period map (induced by the cup product).
So we reduce to prove that for a general fiber $F$, the period map below is injective
$$\delta: H^1(F, T_{F}) \rightarrow \mathrm{Hom}(H^0(F, \Omega^n_{F}), H^1(F, \Omega^{n-1}_{F})).$$
(This is well known if $F$ is a smooth variety with trivial canonical bundle, please refer to \cite{GHJ} Sec. 16.2 whose argument applies for any dimension.)

\textbf{Step 1:}  By $\omega_{\bar{F}} = \mathcal{O}_{\bar{F}}$, the following homomorphism induced by the cup product is injective
$$\alpha: \mathrm{Ext}^1(\Omega_{\bar{F}}^1, \mathcal{O}_{\bar{F}}) \rightarrow \mathrm{Hom}(H^0({\bar{F}},\omega_{\bar{F}}), \mathrm{Ext}^1(\Omega_{\bar{F}}^1, \omega_{\bar{F}})).$$

\textbf{Step 2:}
Since $\bar{F}$ has at most rational singularities, by the projection formula we have $R\mu_*L\mu^*\Omega_{\bar{F}}^1\cong \Omega_{\bar{F}}^1 \otimes R\mu_*\mathcal{O}_F \cong \Omega_{\bar{F}}^1$. So using Grothendieck duality yields
$$R\mathrm{Hom}(L\mu^*\Omega_{\bar{F}}^1, \omega_F) \cong R\mathrm{Hom}(L\mu^*\Omega_{\bar{F}}^1, \mu^!\omega_{\bar{F}}) \cong R\mathrm{Hom}(\Omega_{\bar{F}}^1, \omega_{\bar{F}}),$$
thus $\mathrm{Ext}^1(L\mu^*\Omega_{\bar{F}}^1, \omega_F) \cong \mathrm{Ext}^1(\Omega_{\bar{F}}^1, \omega_{\bar{F}})$.
And using Grothendieck spectral sequence gives
$$E_2^{i,j} = \mathrm{Ext}^j(L^{-i}\mu^*\Omega_{\bar{F}}^1, \omega_F) \Rightarrow \mathrm{Ext}^{i+j}(L\mu^*\Omega_{\bar{F}}^1, \omega_F).$$
Observe that $E_2^{i,j} = \mathrm{Ext}^j(L^{-i}\mu^*\Omega_{\bar{F}}^1, \omega_F) = 0$ if $i>0, j = 0$ or $i<0$ or $j<0$. We conclude that
$$E_{\infty}^{0,1} \cong Ext^1(\mu^*\Omega_{\bar{F}}^1, \omega_F)~\mathrm{and}~E_{\infty}^{1,0} \cong 0,$$
hence
$$\mathrm{Ext}^1(L\mu^*\Omega_{\bar{F}}^1, \omega_F) \cong \mathrm{Ext}^1(\mu^*\Omega_{\bar{F}}^1, \omega_F) \cong \mathrm{Ext}^1(\Omega_{\bar{F}}^1, \omega_{\bar{F}}).$$

\textbf{Step 3:}
By $H^0(F, \omega_{F}) \cong H^0({\bar{F}},\omega_{\bar{F}})$, we have the following commutative diagram
\[\begin{CD}
H^1(F, T_{F})\cong \mathrm{Ext}^1(\Omega_F^1, \mathcal{O}_F)     @>\delta >>    \mathrm{Hom}(H^0(F, \Omega^n_{F}), H^1(F, \Omega^{n-1}_{F})) \\
@V\beta VV          @V\gamma VV               \\
\mathrm{Ext}^1(\Omega_{\bar{F}}^1, \mathcal{O}_{\bar{F}})    @>\alpha >>    \mathrm{Hom}(H^0({\bar{F}},\omega_{\bar{F}}), \mathrm{Ext}^1(\Omega_{\bar{F}}^1, \omega_{\bar{F}}))
\end{CD} \]
where $\gamma$ is induced by
\begin{equation}
\begin{split}
&H^1(F, \Omega^{n-1}_{F})\cong H^1(F, T_F \otimes \omega_{F}) \cong \mathrm{Ext}^1(\Omega_{F}^1, \omega_F) \rightarrow\\
& \rightarrow \mathrm{Ext}^1(\mu^*\Omega_{\bar{F}}^1, \omega_F) \cong \mathrm{Ext}^1(\Omega_{\bar{F}}^1, \omega_{\bar{F}}) \text{~where the arrow is induced by $\mu^*\Omega_{\bar{F}}^1 \rightarrow \Omega_{F}^1$}
\end{split}
\end{equation}
Then since the homomorphisms $\alpha$ and $\beta$ are injective, the period map $\delta$ is also injective.
\end{proof}

\begin{Remark}
The proof highly relies on the infinitesimal Torelli theorem, i.e., the injectivity of the period map $\alpha: \mathrm{Ext}^1(\Omega_{\bar{F}}^1, \mathcal{O}_{\bar{F}}) \rightarrow \mathrm{Hom}(H^0({\bar{F}},\omega_{\bar{F}}), \mathrm{Ext}^1(\Omega_{\bar{F}}^1, \omega_{\bar{F}}))$, so it does not work when the fibers have non-injective period maps (e.g. curves of genus 2). For the injectivity of the period maps of certain irregular surfaces, we refer to \cite{Re}.
\end{Remark}

\section{Example}\label{eg}
In this section, to illustrate the necessity of AS(2), we construct a variety $X$, such that $\kappa(X) = 1$ and $\mathrm{alb}_X$ is fibred by elliptic curves, but $\dim V^0(\omega_X, \mathrm{alb}_X) = 0$.
Let
\begin{itemize}
\item
$E$ be an abelian curve, $a \in E$ a torsion point of order 2, $t_a$ the involution  translating by $a$ and $i$ the involution induced by $(-1)_{E}$;
\item
$C$ a curve of genus $\geq 2$ with an involution $\eta$ such that the quotient $C/\eta$ is an elliptic curve;
\item
$Z=E \times C \times E$, $\sigma =t_a \times \eta \times \mathrm{id}_{E}$ and $\tau = i \times \mathrm{id}_C \times t_a$ two involutions on $Z$.
\end{itemize}
Since $\sigma$ and $\tau$ commute, the group $G=<\sigma, \tau> \cong \mathbb{Z}_2 \times \mathbb{Z}_2$. Note that $G$ acts on $Z$ freely, so the quotient map $\pi: Z \rightarrow X:=Z/G$ is \'{e}tale. Calculate that $\kappa(X) = 1$, $q(X) =2$, and the natural map $X \rightarrow C/\eta \times E/t_a$ is connected hence coincides with the Albanese map $\mathrm{alb}_X$. Then we have the following commutative diagram
\[\begin{CD}
E \times C \times E     @>\pi >> X \\
@Vp_2 \times p_3 VV                @V\mathrm{alb}_X VV       \\
C \times E    @> \pi_2 \times \pi_3 >>  C/\eta\times E/t_a
\end{CD} \]
where $p_2$ and $p_3$ are the projections from $E \times C \times E $ to the $2^{\mathrm{nd}}$ and the $3^{\mathrm{rd}}$ factors, and $\pi_2, \pi_3$ are the quotient maps from $C, E$ to $C/\eta, E/t_a$ respectively.

We can give $\omega_Z$ a $G$--invariant structure such that $\pi_*^G\omega_Z \cong \omega_X$, and similarly give $\omega_C$ an $\eta$--invariant structure and $\omega_E$ $t_a$--invariant and $i$--invariant structure; for $\alpha \in \mathrm{Pic}^0(C/\eta)$ and $\beta \in \mathrm{Pic}^0(E/t_a)$, we give $\pi_2^*\alpha$ an $\eta$--invariant structure such that $(\pi_2)_*^\eta (\pi_2^*\alpha) \cong \alpha$ and $\pi_3^*\beta$ a $t_a$--invariant structure such that $(\pi_3)_*^{t_a}(\pi_3^*\beta) \cong \beta$. Then we have $\alpha \boxtimes \beta \in \mathrm{Pic}^0(X) \cong \mathrm{Pic}^0(C/\eta) \times \mathrm{Pic}^0(E/t_a)$, and naturally $\omega_Z \otimes \pi^*(\alpha \boxtimes \beta)$ is a $G$--invariant sheaf such that
$$\pi_*^G(\omega_Z \otimes \pi^*(\alpha \boxtimes \beta)) \cong \omega_X \otimes \alpha \boxtimes \beta.$$

For $\alpha \boxtimes \beta \in \mathrm{Pic}^0(X) \cong \mathrm{Pic}^0(C/\eta) \times \mathrm{Pic}^0(E/t_a)$, we have
\begin{equation}
\begin{split}
& H^0(X, \omega_X \otimes \alpha \boxtimes \beta) \\
\cong &H^0(Z, \omega_Z \otimes \pi^*(\alpha \boxtimes \beta))^G \\
\cong &(H^0(E, \omega_E) \otimes H^0(C, \omega_C \otimes \pi_2^*\alpha) \otimes H^0(E, \omega_E \otimes \pi_3^*\beta))^G \\
\cong &(H^0(E, \omega_E) \otimes H^0(C, \omega_C \otimes \pi_2^*\alpha) \otimes H^0(E, \omega_E \otimes \pi_3^*\beta))^\sigma \cap \\
&(H^0(E, \omega_E) \otimes H^0(C, \omega_C \otimes \pi_2^*\alpha) \otimes H^0(E, \omega_E \otimes \pi_3^*\beta))^\tau \\
\cong &(H^0(E, \omega_E) \otimes H^0(C, \omega_C \otimes \pi_2^*\alpha)^\eta \otimes H^0(E, \omega_E \otimes \pi_3^*\beta)) \cap \\
&(H^0(E, \omega_E) \otimes H^0(C, \omega_C \otimes \pi_2^*\alpha) \otimes H^0(E, \omega_E \otimes \pi_3^*\beta)^{t_a-}) \\
\cong &H^0(E, \omega_E) \otimes H^0(C, \omega_C \otimes \pi_2^*\alpha)^\eta \otimes H^0(E, \omega_E \otimes \pi_3^*\beta)^{t_a-}
\end{split}
\end{equation}
where $H^0(E, \omega_E \otimes \pi^*\beta)^{t_a-}$ is the $t_a$--anti-invariant subspace, and the $4^{\mathrm{th}}$ ``$\cong$'' is due to that $H^0(E, \omega_E)$ is $t_a$--invariant and is $i$--anti-invariant.

Note that $H^0(C, \omega_C \otimes \pi_2^*\alpha)^\eta \cong H^0(C/\eta, \omega_{C/\eta} \otimes \alpha)$; and since $(\pi_3)_*(\omega_E) \cong \omega_{E/t_a} \oplus L$ where $L$ is a torsion line bundle on $E/t_a$ of order 2, so  $(\pi_3)_*(\omega_E \otimes \pi_3^*\beta) \cong \omega_{E/t_a}\otimes \beta \oplus L \otimes \beta$ where $\omega_{E/t_a}\otimes \beta$ is $t_a$--invariant and $L \otimes \beta$ is $t_a$--anti-invariant.
So we conclude that $H^0(X, \omega_X \otimes \alpha \boxtimes \beta)  \cong H^0(E, \omega_E) \otimes H^0(C/\eta, \omega_{C/\eta} \otimes \alpha)  \otimes H^0(E/t_a, L \otimes \beta) \neq 0$ if and only if $\alpha = \mathcal{O}_{C/\eta}$ and $\beta = L$. Therefore, $V^0(\omega_X) = \{\mathcal{O}_{C/\eta} \boxtimes L\}$.

\end{document}